\DeclareMathAlphabet{\mathcal}{OMS}{zplm}{m}{n}
\newtheorem{theorem}{Theorem}[section]
\newtheorem{lemma}[theorem]{Lemma}
\newtheorem{proposition}[theorem]{Proposition}
\newtheorem{corollary}[theorem]{Corollary}
\newtheorem{claim}[theorem]{Claim}
\theoremstyle{definition}
\newtheorem{definition}[theorem]{Definition}
\newtheorem{example}[theorem]{Example}
\newtheorem{remark}[theorem]{Remark}
\def\KHaus{{\sf{KHaus}}}
\def\Stone{{\sf{Stone}}}
\newcommand{\StoneR}{\mathsf{Stone}^{\mathsf{R}}}
\newcommand{\KHausR}{\mathsf{KHaus}^\mathsf{R}}
\def\ba{{\sf{BA}}}
\def\ma{{\sf{MA}}}
\def\dev{{\sf DeV}}
\def\KRFrm{{\sf KRFrm}}
\def\KHausR{{\sf{KHaus}^R}}
\def\StoneR{{\sf{Stone}^R}}
\def\KRFrm{{\mathsf{KRFrm}}}
\newcommand{\BAS}{\mathsf{BA}^\mathsf{S}}
\newcommand{\KRFrmP}{{\mathsf{KRFrm^P}}}
\newcommand{\StoneER}{\mathsf{StoneE}^\mathsf{R}}
\newcommand{\SubSfive}{\mathsf{SubS5^S}}
\newcommand{\devS}{\mathsf{DeV^S}}
\def\Clop{{\sf{Clop}}}
\def\Uf{{\sf{Uf}}}
\def\RO{{\mathcal{RO}}}
\def\V{{\mathbb{V}}}
\def\Q{{\mathcal{Q}}}
\def\G{{\mathcal{G}}}
\def\int{{\sf{int}}}
\def\cl{{\sf{cl}}}
\def\K{{\mathbb{K}}}
\def\L{{\mathbb{L}}}
\def\I{{\mathbb{I}}}
\def\J{{\mathbb{J}}}
\def\M{{\mathbb{M}}}
\def\O{{\mathcal{O}}}
\def\Alg{{\sf{Alg}}}
\def\Coalg{{\sf{Coalg}}}
\def\D{\mathcal{D}}
\def\S{\mathrel{S}}
\def\E{\mathrel{E}}
\def\R{\mathrel{R}}
\def\T{\mathrel{T}}
\newcommand{\id}{\mathrm{id}}
\newcommand{\Rbox}{\mathcal{R}_\Box}
\newcommand{\Rdia}{\mathcal{R}_\Diamond}
\newcommand{\Rem}{\mathcal{R}_{EM}}
\newcommand{\Rboxrel}{\mathrel{\Rbox}}
\newcommand{\Rdiarel}{\mathrel{\Rdia}}
\newcommand{\Vr}{\V^{\mathsf{R}}}
\newcommand{\Vrbox}{\V^{\mathsf{R}}_\Box}
\newcommand{\Vrdia}{\V^{\mathsf{R}}_\Diamond}
\newcommand{\Sbox}{\mathcal{S}_\Box}
\newcommand{\Sdia}{\mathcal{S}_\Diamond}
\newcommand{\Sem}{\mathcal{S}}
\newcommand{\Sboxrel}{\mathrel{\Sbox}}
\newcommand{\Sdiarel}{\mathrel{\Sdia}}
\newcommand{\Semrel}{\mathrel{\Sem}}
\newcommand{\Ks}{\K^{\mathsf{S}}}
\newcommand{\Jp}{\J^{\mathsf{P}}}
\newcommand{\Ksbox}{\K^{\mathsf{S}}_\Box}
\newcommand{\Ksdia}{\K^{\mathsf{S}}_\Diamond}
\newcommand{\Ls}{\L^\mathsf{S}}
\title[Vietoris endofunctor for closed relations and its de Vries dual]{Vietoris endofunctor for closed relations\\ and its de Vries dual}
\author[M. Abbadini]{Marco Abbadini}
\address{Dipartimento di Matematica,
Universit\`a degli studi di Salerno,
84084 Fisciano (SA),
Italy}
\email{mabbadini@unisa.it}
\author[G. Bezhanishvili]{Guram Bezhanishvili}
\address{Department of Mathematical Sciences\\
New Mexico State University\\
Las Cruces NM 88003\\
USA}
\email{guram@nmsu.edu}
\author[L. Carai]{Luca Carai}
\address{Departament de Filosofia,
Universitat de Barcelona,
08001 Barcelona,
Spain}
\email{luca.carai.uni@gmail.com}
\date{}
\keywords{Compact Hausdorff space, Vietoris space, closed relation, Gleason cover, proximity, de Vries algebra, compact regular frame, subordination relation, MacNeille completion, ideal completion}
\subjclass[2020]{54B20, 54D30, 54E05, 54G05, 18F70, 06D22, 06E15}
\begin{document}

\begin{abstract}
We generalize the classic Vietoris endofunctor to the category of compact Hausdorff spaces and closed relations. The lift of a closed relation is done by generalizing the construction of the Egli-Milner order.
We describe the dual endofunctor on the category of de Vries algebras and subordinations. This is done in several steps, by first generalizing the construction of Venema and Vosmaer to the category of boolean algebras and subordinations, then lifting it up to $\sf S5$-subordination algebras, and finally using MacNeille completions to further lift it to de Vries algebras. Among other things, this yields a generalization of Johnstone's pointfree construction of the Vietoris endofunctor to the category of compact regular frames and preframe homomorphisms.
\end{abstract}

\maketitle

\tableofcontents

\section{Introduction}

Hyperspace constructions are among classic constructions in topology \cite{Eng89} with numerous applications \cite{GHKLMS03}. One of the most famous such is taking the Vietoris space $\V(X)$ of a compact Hausdorff space $X$. It is well known \cite[Sec.~III.4]{Joh82} that this defines an endofunctor $\V$ on the category $\KHaus$ of compact Hausdorff spaces and continuous functions.

In recent years there has been some work on generalizing $\KHaus$ to the category $\KHausR$ of compact Hausdorff spaces and closed relations \cite{Tow96,JKM01,Moshier2004,BezhanishviliGabelaiaEtAl2019,ABC22a}. One advantage of $\KHausR$ is that it is more symmetric. In particular, $\KHausR$ is a dagger category (see, e.g., \cite[Rem.~3.7]{BezhanishviliGabelaiaEtAl2019}). It is then natural to try to generalize the Vietoris endofunctor to an endofunctor $\Vr \colon \KHausR \to \KHausR$. This is one of the aims of the present paper. 

If $f\colon X\to Y$ is a continuous function between compact Hausdorff spaces, then its lift $\V(f) \colon \V(X)\to\V(Y)$ is defined by $\V f(F)=f[F]$, where $F$ is a closed subset of $X$ and $f[F]$ is the $f$-image of $F$ in $Y$. We observe that if $R\subseteq X\times Y$ is a closed relation, then the lift $\Vr(R)$ can be defined by generalizing the well-known Egli-Milner order, which originates in domain theory (see, e.g., \cite[p.~224]{Smy91}). More precisely, if $F\subseteq X$ and $G\subseteq Y$ are closed sets, then we define $\Vr(R)$ by 
\[
F \ \Vr(R) \ G \ \Longleftrightarrow \ G \subseteq R[F] \mbox{ and } F \subseteq R^{-1}[G],
\] 
where $R[F]$ is the $R$-image of $F$ in $Y$ and $R^{-1}[G]$ is the $R$-inverse image of $G$ in $X$. We show that this defines an endofunctor $\Vr \colon \KHausR \to \KHausR$, which restricts to the Vietoris endofunctor $\V \colon \KHaus \to \KHaus$ (see \cref{sec: V on KHausR}).

In pointfree topology there is a well-known duality between $\KHaus$ and the category $\KRFrm$ of compact regular frames \cite{Isb72} (see also \cite{BM80} and \cite[Sec.~III.1]{Joh82}), known as Isbell duality. It is obtained by associating with each compact Hausdorff space $X$ the frame $\mathcal{O}(X)$ of opens of $X$. In \cite[Sec.~III.4]{Joh82} (see also \cite{Joh85}) Johnstone described an endofunctor $\J \colon \KRFrm \to \KRFrm$ dual to $\V \colon \KHaus \to \KHaus$.

Isbell duality extends to a duality between $\KHausR$ and the category $\KRFrmP$ of compact regular frames and preframe homomorphisms \cite{Tow96,JKM01}. While it is possible to extend Johnstone's endofunctor to an endofunctor $\J^{\sf P}\colon \KRFrmP\to\KRFrmP$ that is dual to $\Vr$, the construction is less elegant. Indeed, for $L\in\KRFrm$, the frame $\J(L)$ is constructed as the quotient of the free frame over the set $\{ \Box_a,\Diamond_a \mid a \in L\}$ by appropriate relations \cite[p.~112]{Joh82}. 
Then $\J(L)$ is a compact regular frame and the functoriality of $\J$ follows since every frame homomorphism preserves arbitrary joins and finite meets~\cite[pp.~113--115]{Joh82}.
Since $\KRFrmP$ is a wide subcategory of $\KRFrm$, to define $\Jp(L)$ we don't need to change the construction of $\J(L)$, but the functoriality of $\Jp$ becomes a non-trivial issue because preframe homomorphisms don't preserve arbitrary joins (see \Cref{rem:Johnstone}).
We instead take a different route.

There is another duality for $\KHaus$ that is closely related to Isbell duality. It is obtained by associating with each compact Hausdorff space $X$ the boolean frame $\RO(X)$ of regular opens of $X$ equipped with the proximity relation given by $U \prec V$ iff $\cl(U)\subseteq V$. This yields a duality between $\KHaus$ and the category of pairs $(B,\prec)$ where $B$ is a boolean frame and $\prec$ is a proximity relation on $B$. These pairs are known as de Vries algebras, the resulting category is denoted by $\dev$, and the duality between $\KHaus$ and $\dev$ is known as de Vries duality \cite{DeV62} (see also \cite{Bez10}). 

Since both $\KRFrm$ and $\dev$ are dually equivalent to $\KHaus$, the two categories are equivalent, and the equivalence is obtained through the booleanization $\mathfrak B\colon\KRFrm\to\dev$ \cite{Bez12}. The endofunctor $\J\colon\KRFrm\to\KRFrm$ then gives rise to an endofunctor on $\dev$ that is dual to the Vietoris endofunctor on $\KHaus$. A direct pointfree construction of this endofunctor, without utilizing $\J$, remained an open problem \cite[p.~375]{BBH15b}. The approach we develop in this paper, among other things, will lead to a solution of this problem (see \Cref{rem:open problem}).

In \cite{ABC22a} we extended de Vries duality to $\KHausR$. For this we worked with subordination relations, which originate in \cite{BBSV17}. We introduced the category $\BAS$ whose objects are boolean algebras and whose morphisms are subordination relations between them (see \cref{{sec: subordinations}}). Let $\StoneR$ be the full subcategory of $\KHausR$ consisting of Stone spaces. Then Stone duality extends to an equivalence between $\BAS$ and $\StoneR$ \cite{Cel18,KurzMoshierEtAl2023,ABC22a}. Using the machinery of allegories \cite{FS90} yields an equivalence between $\KHausR$ and the category whose objects are pairs $(B,S)$ where $B$ is a boolean algebra and $S$ is a subordination relation on $B$ satisfying axioms that generalize the axioms of an $\sf S5$-modality, which plays a prominent role in modal logic \cite{BRV01}. Because of this connection, we termed such pairs $(B,S)$ $\sf S5$-subordination algebras and denoted the resulting category by $\SubSfive$ \cite[p.~8]{ABC22a}. The category $\devS$ is a full subcategory of $\SubSfive$ consisting of de Vries algebras. It turns out that $\devS$ is equivalent to $\KHausR$, and hence to $\SubSfive$ \cite[p.~12]{ABC22a}. In fact, the equivalence between $\SubSfive$ and $\devS$ is obtained by generalizing the well-known MacNeille construction to $\sf S5$-subordination algebras \cite{ABC22b}. 

In this paper we define an endofunctor on $\SubSfive$ which is dual to the Vietoris endofunctor $\V^{\sf R}$ on $\KHausR$. This we do as follows. In \cite{VV14} the authors define the endofunctor $\K$ on the category $\ba$ of boolean algebras that is dual to the Vietoris endofunctor $\V$ on the category $\Stone$ of Stone spaces. One of our main technical results lifts this endofunctor to the endofunctor $\K^{\sf S}$ on $\BAS$ (see \cref{{sec: K on BAS}}). In \cref{sec: connecting K and V} we show that $\K^{\sf S}$ is equivalent to $\V^{\sf R}$ on $\StoneR$. Finally, in \cref{sec: extending K to L} we lift $\K^{\sf S}$ to an endofunctor on $\SubSfive$ which is equivalent to $\V^{\sf R}$ on $\KHausR$. We then show that composing the MacNeille completion functor with this endofunctor yields an endofunctor $\Ls$ on $\devS$ which is also equivalent to $\V^{\sf R}$. 

This resolves the problem mentioned above in the more general setting of the category $\SubSfive$, and its full subcategory $\devS$. In \cite{ABC23} we show how this yields a solution of the problem for the category $\dev$ by utilizing the technique developed in \cite{ABC22a}.

\section{The Vietoris endofunctor on \texorpdfstring{$\KHausR$}{KHaus{\textasciicircum}R} } \label{sec: V on KHausR}

We start by recalling the classic Vietoris construction.

\begin{definition}
Let $X$ be a compact Hausdorff space and $\O(X)$ the frame of opens of $X$. The {\em Vietoris space} of $X$ is the space $\V(X)$ of closed subsets of $X$ topologized by the subbasis $\{\Box_U \mid U\in \O(X)\}\cup\{\Diamond_U \mid U\in \O(X)\}$, where
\[
\Box_U = \{ F \in \V(X) \mid F \subseteq U\} \mbox{ and } \Diamond_U=\{F\in\V(X) \mid F\cap U\ne\varnothing\}.
\]
\end{definition}

For a continuous function $f \colon X \to Y$ between compact Hausdorff spaces, define the function $\V(f) \colon \V(X) \to \V(Y)$ by $\V(f)[F]=f[F]$ for each $F \in \V(X)$. It is well known (see, e.g.,~\cite[p.~112]{Joh82}) that this defines an endofunctor on $\KHaus$ which we denote by $\V$.

The following useful lemma is well known (see \cite[p.~112]{Joh82}):

\begin{lemma}\label{l:Johnstone axioms}
Let $X\in\KHaus$ and $\mathcal{U} \subseteq \O(X)$. Then
\[
\begin{array}{ll}
\Box_{\bigcap \mathcal U}=\bigcap\{\Box_U \mid U \in \mathcal U \} & \mathcal U \mbox{ finite}, \\
\Box_{\bigcup \mathcal U}=\bigcup\{\Box_U \mid U \in \mathcal U \} & \mathcal U \mbox{ directed}, \\
\Diamond_{\bigcup \mathcal U}=\bigcup\{\Diamond_U \mid U \in \mathcal U \} & \mathcal U \mbox{ arbitrary}, \\
\Box_{U\cup V} \subseteq \Box_U \cup \Diamond_V, & \Box_U \cap \Diamond_V \subseteq \Diamond_{U\cap V}.
\end{array}
\]
\end{lemma}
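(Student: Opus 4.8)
The plan is to verify each of the five displayed identities/inclusions directly from the definitions of $\Box_U$ and $\Diamond_U$, using only elementary set-theoretic reasoning together with compactness where needed. The first step is to record the two ``trivial'' facts: for a closed set $F$, membership in $\Box_U$ means $F \subseteq U$ and membership in $\Diamond_U$ means $F \cap U \ne \varnothing$. From this, $\Box_{\bigcap \mathcal U} = \bigcap_{U} \Box_U$ and $\Box_{\bigcup \mathcal U} \supseteq \bigcup_U \Box_U$ and $\Diamond_{\bigcup\mathcal U} = \bigcup_U \Diamond_U$ are immediate for arbitrary $\mathcal U$, since $F \subseteq \bigcap\mathcal U$ iff $F\subseteq U$ for all $U$, while $F\cap\bigcup\mathcal U \ne \varnothing$ iff $F \cap U \ne \varnothing$ for some $U$. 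So no finiteness is needed for $\Diamond$, and the $\bigcap$-identity needs finiteness only because an infinite intersection of opens need not be open (so $\Box_{\bigcap\mathcal U}$ would not be a legitimate subbasic open); as an equality of subsets of $\V(X)$ it holds regardless.

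The one genuinely nontrivial step is the forward inclusion $\Box_{\bigcup \mathcal U} \subseteq \bigcup_U \Box_U$ for \emph{directed} $\mathcal U$, and this is where I expect the main (though still standard) obstacle to lie: it is exactly where compactness of $X$ enters. Suppose $F \in \Box_{\bigcup\mathcal U}$, i.e.\ $F$ is closed and $F \subseteq \bigcup\mathcal U$. Then $\{U \mid U \in \mathcal U\}$ is an open cover of the compact set $F$, so finitely many $U_1,\dots,U_n \in \mathcal U$ cover $F$; by directedness there is a single $U \in \mathcal U$ with $U_1 \cup \dots \cup U_n \subseteq U$, whence $F \subseteq U$ and $F \in \Box_U \subseteq \bigcup_{V \in \mathcal U}\Box_V$. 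Combined with the trivial reverse inclusion this gives the second identity.

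For the last line, take $F \in \Box_{U\cup V}$, so $F \subseteq U \cup V$. If $F \subseteq U$ then $F \in \Box_U$; otherwise there is $x \in F$ with $x \notin U$, hence $x \in V$, so $F \cap V \ne \varnothing$ and $F \in \Diamond_V$. This proves $\Box_{U\cup V} \subseteq \Box_U \cup \Diamond_V$. Finally, if $F \in \Box_U \cap \Diamond_V$ then $F \subseteq U$ and there is $x \in F \cap V$; that $x$ lies in $F \subseteq U$, so $x \in F \cap U \cap V$, giving $F \cap (U\cap V) \ne \varnothing$, i.e.\ $F \in \Diamond_{U\cap V}$. Assembling these four observations completes the proof; I would present it as four short paragraphs mirroring the four lines of the statement, flagging explicitly that compactness is used only in the directed-join case.
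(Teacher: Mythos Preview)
Your proof is correct and complete; each of the five verifications is sound, and you are right that compactness is used only for the directed-join identity. The paper does not actually give a proof of this lemma --- it is stated as well known with a citation to Johnstone --- so there is nothing to compare against; your direct elementary verification is exactly the standard argument one would expect.
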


As a simple consequence of \cref{l:Johnstone axioms} we obtain:

\begin{lemma}
$\{\Box_U\cap\Diamond_{V_1}\cap\cdots\cap\Diamond_{V_n} \mid U,V_1,\dots,V_n\in\O(X)$ and $V_1,\dots,V_n\subseteq U\}$ is a basis for $\V(X)$.
\end{lemma}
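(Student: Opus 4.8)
The plan is to show that the proposed collection is a basis by two steps: first, that every element of the collection is open; second, that every basic-open set in the subbasis-generated topology contains, around each of its points, a member of the collection. Since $\V(X)$ is defined via the subbasis $\{\Box_U\} \cup \{\Diamond_U\}$, a basis is obtained by taking all finite intersections of subbasic sets, so it suffices to massage an arbitrary finite intersection $\Box_{U_1} \cap \cdots \cap \Box_{U_m} \cap \Diamond_{V_1} \cap \cdots \cap \Diamond_{V_n}$ into the claimed normal form (or rather, to find inside it a member of the claimed family containing any given point $F$).

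First I would dispatch the $\Box$'s: by the first identity in \cref{l:Johnstone axioms}, $\Box_{U_1} \cap \cdots \cap \Box_{U_m} = \Box_{U}$ where $U = U_1 \cap \cdots \cap U_m$ (and the empty intersection gives $U = X$, with $\Box_X = \V(X)$). So every finite intersection of subbasic sets already has the form $\Box_U \cap \Diamond_{V_1} \cap \cdots \cap \Diamond_{V_n}$, which is exactly an element of the claimed family \emph{except} that we do not yet know $V_i \subseteq U$. To fix this, suppose $F \in \Box_U \cap \Diamond_{V_1} \cap \cdots \cap \Diamond_{V_n}$, so $F \subseteq U$ and $F \cap V_i \neq \varnothing$ for each $i$. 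Replacing each $V_i$ by $V_i \cap U$ keeps $F$ in the set, since $F \subseteq U$ forces $F \cap V_i = F \cap (V_i \cap U) \neq \varnothing$; and clearly $\Box_U \cap \bigcap_i \Diamond_{V_i \cap U} \subseteq \Box_U \cap \bigcap_i \Diamond_{V_i}$ because $\Diamond_{V_i \cap U} \subseteq \Diamond_{V_i}$ (monotonicity of $\Diamond$, itself immediate from the third identity of \cref{l:Johnstone axioms} or from the definition). Thus $F \in \Box_U \cap \bigcap_i \Diamond_{V_i \cap U} \subseteq \Box_U \cap \bigcap_i \Diamond_{V_i}$, and the middle set lies in the claimed family since now $V_i \cap U \subseteq U$.

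Finally I would note that each member $\Box_U \cap \Diamond_{V_1} \cap \cdots \cap \Diamond_{V_n}$ of the family is open (a finite intersection of subbasic opens), and the argument above shows every point of every finite intersection of subbasic opens has a family member as a neighborhood contained in that intersection. Since finite intersections of subbasic opens form a basis, so does the (sub)family we have carved out. There is no real obstacle here; the only thing to be careful about is the degenerate cases (empty intersection of $\Box$'s, and $n = 0$), which are handled by reading $U = X$ and the empty $\Diamond$-intersection as $\V(X)$, so that $\Box_X = \V(X)$ itself is a member of the family and the whole space is covered.
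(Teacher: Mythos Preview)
Your argument is correct and is essentially what the paper intends: it states the lemma as ``a simple consequence of \cref{l:Johnstone axioms}'' without further detail, and your proof supplies precisely those details (combine the $\Box$'s via the first identity, then shrink each $V_i$ to $V_i\cap U$). One small simplification: the pointwise detour through $F$ is unnecessary, since the last identity $\Box_U\cap\Diamond_V\subseteq\Diamond_{U\cap V}$ in \cref{l:Johnstone axioms}, together with $\Diamond_{U\cap V}\subseteq\Diamond_V$, gives $\Box_U\cap\Diamond_{V_i}=\Box_U\cap\Diamond_{V_i\cap U}$ outright, so every finite intersection of subbasic opens is \emph{equal} to a member of the claimed family.
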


We recall that a relation $R \colon X \to Y$ between compact Hausdorff spaces is \emph{closed} if it is a closed subset of $X \times Y$. Let $\KHausR$ be the category of compact Hausdorff spaces and closed relations between them. Identity morphisms in $\KHausR$ are identity relations and composition is relation composition.
Our goal is to lift $\V$ to an endofunctor on $\KHausR$. 
For this we generalize the definition of the Egli-Milner order \cite[p.~224]{Smy91}.

\begin{definition}
Let $R \colon X \to Y$ be a closed relation between compact Hausdorff spaces. Define $\Rbox,\Rdia$, and $\Rem$ from $\V(X)$ to $\V(Y)$ as follows:
\begin{enumerate}
\item $F \Rboxrel G$ iff $G \subseteq R[F]$,
\item $F \Rdiarel G$ iff $F \subseteq R^{-1}[G]$,
\item $\Rem = \Rbox \cap \Rdia$.
\end{enumerate} 
\end{definition}

In order to show that these relations are closed, we utilize the following lemma (see, e.g., \cite[Lem.~2.12]{BBSV17}).

\begin{lemma}\label{lem:R closed equiv condition}
For a relation $R \colon X \to Y$ between compact Hausdorff spaces, the following are equivalent:
\begin{enumerate}[label=\normalfont(\arabic*), ref = \arabic*]
\item\label{lem:R closed equiv condition:item1} $R$ is closed,
\item\label{lem:R closed equiv condition:item2} $R[F]$ is closed for each closed $F \subseteq X$ and $R^{-1}[G]$ is closed for each closed $G \subseteq Y$,
\item\label{lem:R closed equiv condition:item3} $(x,y) \notin R$ implies the existence of two open neighborhoods $U_x$ and $U_y$ such that $R[U_x] \cap U_y = \varnothing$.
\end{enumerate}
\end{lemma}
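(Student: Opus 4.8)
The plan is to establish the equivalences by proving $(1)\Leftrightarrow(3)$ and $(1)\Leftrightarrow(2)$ separately, since each of these pairs has a clean direct argument. The equivalence of $(1)$ and $(3)$ is really just an unwinding of the definition of the product topology: the condition $R[U_x]\cap U_y=\varnothing$ says exactly that the basic open box $U_x\times U_y$ is disjoint from $R$. So if $R$ is closed and $(x,y)\notin R$, then $(X\times Y)\setminus R$ is an open neighborhood of $(x,y)$ and hence contains such a box; conversely, if every point outside $R$ has such a box around it, then the complement of $R$ is open. I would point out that this part uses neither compactness nor the Hausdorff property.

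For $(1)\Rightarrow(2)$ I would use the standard compactness argument. Given a closed $F\subseteq X$, note that $F\times Y$ is closed in $X\times Y$, so $R\cap(F\times Y)$ is a closed, hence compact, subset of the compact space $X\times Y$, and $R[F]=\pi_Y\bigl(R\cap(F\times Y)\bigr)$ is its image under the continuous projection $\pi_Y\colon X\times Y\to Y$. A continuous image of a compact set is compact, and compact subsets of the Hausdorff space $Y$ are closed, so $R[F]$ is closed; the claim for $R^{-1}[G]$ is symmetric, using $\pi_X$.

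The one genuinely substantive step is $(2)\Rightarrow(1)$, and this is where I expect the only real work. Suppose $(2)$ holds and take $(x,y)\notin R$, i.e.\ $y\notin R[\{x\}]$. By $(2)$ the set $R[\{x\}]$ is closed in $Y$ and misses $y$, so using that $Y$ is compact Hausdorff, hence regular, I can choose an open $V\ni y$ with $\cl(V)\cap R[\{x\}]=\varnothing$. Now $\cl(V)$ is closed, so by $(2)$ again $R^{-1}[\cl(V)]$ is closed, and $x\notin R^{-1}[\cl(V)]$ precisely because $R[\{x\}]\cap\cl(V)=\varnothing$. Hence $U:=X\setminus R^{-1}[\cl(V)]$ is an open neighborhood of $x$ with $R[U]\cap\cl(V)=\varnothing$, so in particular $R[U]\cap V=\varnothing$. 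Thus $U\times V$ is a neighborhood of $(x,y)$ disjoint from $R$ (equivalently, $U,V$ witness $(3)$), and therefore the complement of $R$ is open.

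Putting these together gives $(1)\Leftrightarrow(2)\Leftrightarrow(3)$. I would close with the remark that $(1)\Leftrightarrow(3)$ is valid in arbitrary spaces, while the compactness of $X\times Y$ is needed only for $(1)\Rightarrow(2)$ and the regularity of $Y$ (and $X$) only for $(2)\Rightarrow(1)$, which clarifies exactly where the hypotheses are used.
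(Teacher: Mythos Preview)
Your proof is correct. The paper does not actually supply its own proof of this lemma: it is stated as a known result with a reference to \cite[Lem.~2.12]{BBSV17}, so there is no in-paper argument to compare against. Your decomposition into $(1)\Leftrightarrow(3)$ (pure product-topology unwinding) and $(1)\Rightarrow(2)$ via projection of a compact set, together with the regularity argument for $(2)\Rightarrow(1)$, is the standard route and matches what one finds in the cited source; your closing remark isolating exactly which hypotheses are used where is a nice addition.
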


To simplify notation (see \cref{lem:R R1 R2 closed,prop:description-on-morphisms}), for $E \subseteq Z$ we write $-E$ to denote the complement of $E$ in $Z$.

\begin{proposition}\label{lem:R R1 R2 closed}
$\Rbox$, $\Rdia$, and $\Rem$ are closed relations.
\end{proposition}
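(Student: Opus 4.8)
The goal is to show that each of $\Rbox$, $\Rdia$, and $\Rem$ is a closed subset of $\V(X) \times \V(Y)$. Since $\Rem = \Rbox \cap \Rdia$ and intersections of closed sets are closed, it suffices to handle $\Rbox$ and $\Rdia$ separately. I would prove that the complement of each relation is open in $\V(X) \times \V(Y)$, working with the subbasis of $\V(X)$ and $\V(Y)$ given by the $\Box$ and $\Diamond$ sets. The key tool will be \cref{lem:R closed equiv condition}: since $R$ is closed, $R[F]$ is closed for every closed $F \subseteq X$ and $R^{-1}[G]$ is closed for every closed $G \subseteq Y$, and moreover condition~\ref{lem:R closed equiv condition:item3} gives a separation property I can exploit to produce the required subbasic open neighborhoods.

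\emph{The relation $\Rbox$.} Suppose $(F, G) \notin \Rbox$, i.e., $G \not\subseteq R[F]$. Then there is a point $y \in G$ with $y \notin R[F]$, which means $(x,y) \notin R$ for all $x \in F$. By \cref{lem:R closed equiv condition}\ref{lem:R closed equiv condition:item3}, for each $x \in F$ there are open neighborhoods $U_x$ of $x$ and $W_x$ of $y$ with $R[U_x] \cap W_x = \varnothing$. By compactness of $F$, finitely many $U_{x_1}, \dots, U_{x_n}$ cover $F$; set $U = U_{x_1} \cup \cdots \cup U_{x_n}$ and $W = W_{x_1} \cap \cdots \cap W_{x_n}$, an open neighborhood of $y$. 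Then $R[U] \cap W = \varnothing$. Now $F \in \Box_U$ and $G \in \Diamond_W$, and I claim $\Box_U \times \Diamond_W$ misses $\Rbox$: if $F' \subseteq U$ then $R[F'] \subseteq R[U]$, so $R[F'] \cap W = \varnothing$; hence any $G'$ meeting $W$ cannot be contained in $R[F']$, so $(F', G') \notin \Rbox$. This exhibits an open neighborhood of $(F,G)$ disjoint from $\Rbox$, so the complement is open.

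\emph{The relation $\Rdia$.} This is the dual argument. Suppose $(F,G) \notin \Rdia$, i.e., $F \not\subseteq R^{-1}[G]$. Pick $x \in F$ with $x \notin R^{-1}[G]$, so $(x,y) \notin R$ for all $y \in G$. For each $y \in G$, \cref{lem:R closed equiv condition}\ref{lem:R closed equiv condition:item3} gives open neighborhoods $U_y$ of $x$ and $W_y$ of $y$ with $R[U_y] \cap W_y = \varnothing$; equivalently, $U_y \cap R^{-1}[W_y] = \varnothing$. Compactness of $G$ yields a finite subcover $W_{y_1}, \dots, W_{y_m}$ of $G$; put $W = W_{y_1} \cup \cdots \cup W_{y_m}$ and $U = U_{y_1} \cap \cdots \cap U_{y_m}$, an open neighborhood of $x$. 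Then $U \cap R^{-1}[W] = \varnothing$. Now $F \in \Diamond_U$, $G \in \Box_W$, and $\Diamond_U \times \Box_W$ misses $\Rdia$: if $G' \subseteq W$ then $R^{-1}[G'] \subseteq R^{-1}[W]$, which is disjoint from $U$, so any $F'$ meeting $U$ fails $F' \subseteq R^{-1}[G']$. Hence the complement of $\Rdia$ is open.

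\emph{Main obstacle.} None of the steps is deep, but the place requiring care is the correct bookkeeping of quantifiers and the compactness argument: one must separate a single point from a \emph{compact} set using \ref{lem:R closed equiv condition:item3}, which only separates pairs of points, so the finite-subcover step and the choice of which neighborhoods to union versus intersect must be done in the right order. A secondary subtlety is verifying that the chosen sets are genuinely subbasic opens of the Vietoris spaces (i.e., that $\Box_U, \Diamond_W$ with $U, W$ open are open), which is immediate from the definition, and that $\Box_U \times \Diamond_W$ (resp.\ $\Diamond_U \times \Box_W$) is open in the product. Once the separation-and-compactness step is set up correctly, the rest is routine, and closedness of $\Rem$ follows at once.
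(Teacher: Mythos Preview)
Your proof is correct and follows essentially the same strategy as the paper: for each of $\Rbox$ and $\Rdia$, you exhibit a subbasic open neighborhood of an arbitrary point in the complement, and then $\Rem$ is handled as the intersection. The only minor difference is in the construction of the separating opens: the paper uses \cref{lem:R closed equiv condition}\eqref{lem:R closed equiv condition:item2} directly (observing that $R[F]$ is closed, then separating the point $y$ from the closed set $R[F]$ by disjoint opens $U,V$ in the compact Hausdorff space $Y$, and taking $\Box_{-R^{-1}-V}$ as the neighborhood of $F$), whereas you use \cref{lem:R closed equiv condition}\eqref{lem:R closed equiv condition:item3} pointwise over $F$ together with compactness of $F$ to build $U$ and $W$. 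These are two routine variants of the same argument.
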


\begin{proof}
To see that $\Rbox$ is a closed relation, it is enough to show that \cref{lem:R closed equiv condition}\eqref{lem:R closed equiv condition:item3} is satisfied. Let $(F,G) \notin \Rbox$. Then $G\not\subseteq R[F]$. Therefore, there is $x\in G$ such that $x\notin R[F]$. Since $R[F]$ is closed, there are disjoint open sets $U,V$ such that $x\in U$ and $R[F]\subseteq V$. Thus, $F\subseteq -R^{-1}-V$, so $F\in\Box_{-R^{-1}-V}$. Also, $G\cap U\ne\varnothing$, so $G\in\Diamond_U$. We show that $\Rbox[\Box_{-R^{-1}-V}]\cap\Diamond_U=\varnothing$. If $K\in \Rbox[\Box_{-R^{-1}-V}]\cap\Diamond_U$, then $K\cap U\ne\varnothing$ and there is $H\in\Box_{-R^{-1}-V}$ such that $H \Rboxrel K$. Therefore, $H\subseteq -R^{-1}-V$ and $K\subseteq R[H]$. From $H\subseteq -R^{-1}-V$ it follows that $R[H]\subseteq V$, so $K\subseteq V$, which together with $K\cap U\ne\varnothing$ implies that $U,V$ aren't disjoint, a contradiction. Thus, $\Rbox$ is closed.

To see that $\Rdia$ is a closed relation, we again show that \cref{lem:R closed equiv condition}\eqref{lem:R closed equiv condition:item3} is satisfied. Let $(F,G) \notin \Rdia$. Then $F \not\subseteq R^{-1}[G]$. Therefore, there is $x\in F$ such that $x \notin R^{-1}[G]$. Since $R^{-1}[G]$ is closed, there are disjoint open $U,V$ such that $x\in U$ and $R^{-1}[G]\subseteq V$. Thus, $F\cap U\ne\varnothing$ and $G\subseteq -R-V$. This implies that $F\in\Diamond_U$ and $G \in \Box_{-R-V}$. We show that $\Rdia[\Diamond_U] \cap \Box_{-R-V} = \varnothing$. If $K\in \Rdia[\Diamond_U] \cap \Box_{-R-V}$, then $K\subseteq -R-V$ and there is $H \in \Diamond_U$ such that $H \Rdiarel K$. Therefore, $R^{-1}[K]\subseteq V$, $H\cap U\ne\varnothing$, and $H\subseteq R^{-1}[K]$. Thus, $H\subseteq V$ and $H\cap U\ne\varnothing$, which contradicts that $U,V$ are disjoint. Consequently, $\Rdia$ is closed.

Finally, since both $\Rbox$ and $\Rdia$ are closed relations, so is their intersection, hence $\Rem$ is a closed relation.
\end{proof}

\begin{definition}
For a morphism $R \colon X \to Y$ in $\KHausR$ define 
\[
\Vrbox(R) = \Rbox, \ \Vrdia(R) = \Rdia, \mbox{ and } \Vr(R) = \Rem.
\] 
\end{definition}

We next show that $\Vrbox, \Vrdia$ are semi-functors, while $\Vr$ is an endofunctor on $\KHausR$. We recall that, given two categories $\mathsf{C}$ and $\mathsf{D}$, a \emph{semi-functor} $F \colon \mathsf{C} \to \mathsf{D}$ is an assignment that maps objects of $\mathsf{C}$ to objects of $\mathsf{D}$, morphisms of $\mathsf{C}$ to morphisms of $\mathsf{D}$, and preserves composition (see, e.g., \cite{Hay85,Hoofman1993}).
The notion of a semi-functor is weaker than that of a functor since identity morphisms are not required to be preserved.

\begin{theorem}\label{t:Vr functor}
\hfill\begin{enumerate}[label=\normalfont(\arabic*), ref = \arabic*]
\item\label{t:Vr functor:item1} $\Vrbox, \Vrdia \colon \KHausR \to \KHausR$ are semi-functors.
\item\label{t:Vr functor:item2} $\Vr$ is an endofunctor on $\KHausR$ which restricts to the Vietoris endofunctor $\V$ on $\KHaus$.
\item\label{t:Vr functor:item3} Each of the three restricts to $\StoneR$. 
\end{enumerate}
\end{theorem}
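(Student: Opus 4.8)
The plan is to verify the three parts in order, leaning on \cref{lem:R R1 R2 closed} (which already shows all the relevant relations are closed, so every assignment below lands in $\KHausR$) and on \cref{lem:R closed equiv condition} for the image-computation facts.

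\textbf{Part (1): $\Vrbox$ and $\Vrdia$ preserve composition.} Let $R\colon X\to Y$ and $S\colon Y\to Z$ be closed relations. I would show $\Vrbox(S\circ R)=\Vrbox(S)\circ\Vrbox(R)$ by a direct double inclusion, using the set-theoretic identity $(S\circ R)[F]=S[R[F]]$ for every closed $F\subseteq X$. For one inclusion, if $F\mathrel{\Rbox}H$ (i.e.\ $H\subseteq R[F]$) and $H\mathrel{\mathcal S_\Box}G$ (i.e.\ $G\subseteq S[H]$), then $G\subseteq S[H]\subseteq S[R[F]]=(S\circ R)[F]$, so $F\mathrel{\mathcal{(S\circ R)}_\Box}G$. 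For the reverse, given $G\subseteq (S\circ R)[F]=S[R[F]]$, take $H:=R[F]$, which is closed by \cref{lem:R closed equiv condition}; then $F\mathrel{\Rbox}H$ and $H\mathrel{\mathcal S_\Box}G$, so $(F,G)$ is in the composite. The argument for $\Vrdia$ is the order-dual/converse: using $(S\circ R)^{-1}=R^{-1}\circ S^{-1}$ and hence $(S\circ R)^{-1}[G]=R^{-1}[S^{-1}[G]]$, the same two-step factorization works with the witness $H:=S^{-1}[G]$ (closed again by \cref{lem:R closed equiv condition}). Note preservation of identities fails in general — this is exactly why these are only semi-functors — but I need not address that.

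\textbf{Part (2): $\Vr$ is a functor.} Since $\Rem=\Rbox\cap\Rdia$, I would show $\Vr(S\circ R)=\Vr(S)\circ\Vr(R)$ directly rather than trying to intersect the results of Part (1) (intersection does not commute with relational composition in general, so the two semi-functor facts alone are not enough). The forward inclusion is immediate: if $F\mathrel{\Rem}H$ and $H\mathrel{\mathcal S}G$, then in particular $F\mathrel{\Rbox}H\mathrel{\mathcal S_\Box}G$ and $F\mathrel{\Rdia}H\mathrel{\mathcal S_\Diamond}G$, so by Part (1) both $F\mathrel{\mathcal{(S\circ R)}_\Box}G$ and $F\mathrel{\mathcal{(S\circ R)}_\Diamond}G$, i.e.\ $F\mathrel{\mathcal{(S\circ R)}_{EM}}G$. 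The hard direction — and the main obstacle — is the reverse: given closed $F\subseteq X$, $G\subseteq Z$ with $G\subseteq (S\circ R)[F]$ and $F\subseteq (S\circ R)^{-1}[G]$, I must produce a \emph{single} closed $H\subseteq Y$ that simultaneously witnesses $F\mathrel{\Rem}H$ and $H\mathrel{\mathcal S}G$, i.e.\ with $H\subseteq R[F]$, $F\subseteq R^{-1}[H]$, $G\subseteq S[H]$, $H\subseteq S^{-1}[G]$. The natural candidate is $H:=R[F]\cap S^{-1}[G]$, which is closed. Then $H\subseteq R[F]$ and $H\subseteq S^{-1}[G]$ hold by construction; the real work is checking $F\subseteq R^{-1}[H]$ and $G\subseteq S[H]$. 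For the latter: given $z\in G\subseteq S[H']$... more carefully, for $z\in G$ we have $z\in(S\circ R)[F]$, so there is $y$ with $x\mathrel R y\mathrel S z$ for some $x\in F$; such a $y$ lies in $R[F]$, and it lies in $S^{-1}[G]$ since $y\mathrel S z$ and $z\in G$; hence $y\in H$ and $z\in S[H]$. Symmetrically, for $x\in F$ we have $x\in(S\circ R)^{-1}[G]$, giving $y$ with $x\mathrel R y\mathrel S z$ for some $z\in G$; this $y$ is in $R[F]$ (witnessed by $x$) and in $S^{-1}[G]$ (witnessed by $z$), so $y\in H$ and $x\in R^{-1}[H]$. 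This closes the composition argument. Then $\Vr$ preserves identities: for $R=\id_X$ (the diagonal), $R[F]=F=R^{-1}[F]$, so $F\mathrel{\mathcal{(\id_X)}_{EM}}G$ iff $G\subseteq F$ and $F\subseteq G$ iff $F=G$, i.e.\ $\Vr(\id_X)=\id_{\V(X)}$. Finally, to see $\Vr$ restricts to $\V$ on $\KHaus$: when $R$ is (the graph of) a continuous function $f$, $R[F]=f[F]$ and $R^{-1}[G]=f^{-1}[G]$, so $F\mathrel{\Rem}G$ iff $G\subseteq f[F]$ and $F\subseteq f^{-1}[G]$; the second condition says $f[F]\subseteq G$, so together $G=f[F]$, which is exactly the graph of $\V(f)$.

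\textbf{Part (3): restriction to $\StoneR$.} Here I would note that $\StoneR$ is the full subcategory of $\KHausR$ on the Stone spaces, so it suffices to check that $\V(X)$ is a Stone space whenever $X$ is; functoriality and the closedness of $\Rbox,\Rdia,\Rem$ are then inherited verbatim from Parts (1)--(2). That $\V(X)$ is compact Hausdorff is already built into $\V\colon\KHaus\to\KHaus$; that it is zero-dimensional when $X$ is follows from the standard fact that if $\mathcal B$ is a basis of clopens for $X$ then the sets $\Box_U\cap\Diamond_{V_1}\cap\dots\cap\Diamond_{V_n}$ with $U,V_1,\dots,V_n\in\mathcal B$ (and, if one wishes, $V_i\subseteq U$) form a basis of clopens for $\V(X)$ — clopenness of each such set is a short computation from \cref{l:Johnstone axioms} together with the identities $-\Box_U=\Diamond_{-U}$ and $-\Diamond_U=\Box_{-U}$ valid for clopen $U$. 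Hence $\V$ restricts to $\Stone$, and therefore $\Vr$ (and the semi-functors $\Vrbox,\Vrdia$) restrict to $\StoneR$.

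I expect Part (1) and Part (3) to be routine, with the only genuine subtlety being the reverse inclusion in the composition law for $\Vr$ in Part (2), where the correct common witness $H=R[F]\cap S^{-1}[G]$ must be identified and both of the non-trivial containments $F\subseteq R^{-1}[H]$ and $G\subseteq S[H]$ verified.
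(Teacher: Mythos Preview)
Your proposal is correct and follows essentially the same route as the paper: the composition arguments for $\Vrbox$ and $\Vrdia$ use the witnesses $R[F]$ and $S^{-1}[G]$ respectively, and the key step for $\Vr$ is exactly the paper's choice of common witness $H=R[F]\cap S^{-1}[G]$ together with the same verification of $F\subseteq R^{-1}[H]$ and $G\subseteq S[H]$. The only cosmetic difference is that for Part~(3) the paper simply cites Michael's result that $\V(X)$ is Stone when $X$ is, whereas you sketch the zero-dimensionality directly.
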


\begin{proof}
\eqref{t:Vr functor:item1}. That $\Vrbox, \Vrdia$ are well defined follows from \cref{lem:R R1 R2 closed}. Let $R_1 \colon X_1 \to X_2$, $R_2 \colon X_2 \to X_3$ be morphisms in $\KHausR$ and $F\subseteq X_1$, $G\subseteq X_2$, $H\subseteq X_3$ closed subsets.  
If $F \mathrel{\Vrbox(R_1)} G$ and $G \mathrel{\Vrbox(R_2)} H$, then 
$G \subseteq R_1[F]$ and $H \subseteq R_2[G]$. Therefore, 
\[
H \subseteq R_2[G] \subseteq R_2[R_1[F]] = (R_2 \circ R_1)[F].
\] 
Thus, $F \mathrel{\Vrbox(R_2 \circ R_1)} H$. Conversely, if $F \mathrel{\Vrbox(R_2 \circ R_1)} H$, then $H \subseteq R_2[R_1[F]]$. Let $G=R_1[F]$. Then $G$ is a closed subset of $X_2$ such that $G 
\subseteq R_1[F]$ and $H \subseteq 
R_2[G]$. Therefore, $F \mathrel{\Vrbox(R_1)} G$ and $G \mathrel{\Vrbox(R_2)} H$. Thus, $\Vrbox(R_2 \circ R_1) = \Vrbox(R_2) \circ \Vrbox(R_1)$. The proof for $\Vrdia$ is similar.

\eqref{t:Vr functor:item2}. That $\Vr$ is well defined follows from \cref{lem:R R1 R2 closed}. It is an immediate consequence of the definition of $\Rem$ that $\Vr$ preserves identity relations. If $F \mathrel{\Vr(R_1)} G$ and $G \mathrel{\Vr(R_2)} H$, then 
\[
G \subseteq R_1[F], \ F \subseteq R_1^{-1}[G], \ H \subseteq R_2[G], \ \text{and} \ G \subseteq R_2^{-1}[H].
\]
The same argument as in \eqref{t:Vr functor:item1} yields that $F \mathrel{\Vr(R_2 \circ R_1)} H$.
Conversely, let $F\subseteq X_1$ and $H\subseteq X_3$ be closed subsets  
such that $F \mathrel{\Vr(R_2 \circ R_1)} H$. Then $H \subseteq R_2[R_1[F]]$ and $F \subseteq R_1^{-1}[R_2^{-1}[H]]$. Since $R_1$ and $R_2$ are closed relations, $G \coloneqq R_1[F] \cap R_2^{-1}[H]$ is a closed subset of $X_2$. We show that $F \mathrel{\Vr(R_1)} G$ and $G \mathrel{\Vr(R_2)} H$. The definition of $G$ yields that $G \subseteq R_1[F]$ and $G \subseteq R_2^{-1}[H]$. Since $H \subseteq R_2[R_1[F]]$, for each $z \in H$ there are $x \in F$ and $y \in X_2$ such that $x \R_1 y$ and $y \R_2 z$. Therefore, $y \in R_1[F] \cap R_2^{-1}[H] = G$. Thus, $H \subseteq R_2[G]$. Similarly, $F \subseteq R_1^{-1}[R_2^{-1}[H]]$ implies $F \subseteq R_1^{-1}[G]$. Consequently, $F \mathrel{\Vr(R_1)} G$ and $G \mathrel{\Vr(R_2)} H$. This shows that $\Vr$ preserves compositions, and hence is an endofunctor on $\KHausR$.

If $f \colon X_1 \to X_2$ is a continuous function, then $F \subseteq f^{-1}[G]$ iff $f[F] \subseteq G$. Therefore, $F \mathrel{\Vr(f)} G$ iff $G = f[F]$, and so $\Vr(f)$ is the function that maps $F \in \V(X_1)$ to $f[F] \in \V(X_2)$. Thus, $\Vr$ extends the Vietoris endofunctor on $\KHaus$. 

\eqref{t:Vr functor:item3}. It is well known (see \cite[Sec.~4]{Mic51}) that if $X$ is a Stone space, then $\V(X)$ is also a Stone space. Consequently, $\Vrbox, \Vrdia, \Vr$ restrict to $\StoneR$.
\end{proof}

\begin{remark}
The semi-functors $\Vrbox$ and $\Vrdia$ do not preserve identity. Indeed, if $R$ is the identity relation on $X$, then $F \Rboxrel G$ iff $G \subseteq F$ and $F \Rdiarel G$ iff $F \subseteq G$. Therefore, if $X \neq \varnothing$, then
neither $\Rbox$ nor $\Rdia$ is the identity on $\V(X)$ in $\KHausR$. 
\end{remark}

We recall (see, e.g., \cite[p.~74]{HV19}) that a \emph{dagger} on a category $\mathsf{C}$ is a contravariant functor $(-)^\dagger \colon \mathsf{C} \to \mathsf{C}$ that is the identity on objects and the composition $(-)^\dagger \circ (-)^\dagger$ is the identity functor on $\mathsf{C}$.
It is well known and easy to see that mapping a closed relation $R \colon X \to Y$ to its converse, which is clearly a closed relation, defines a dagger on $\KHausR$. 
We conclude the section by observing that $\Vrbox$ and $\Vrdia$ are definable from each other using $(-)^\dagger$, and that $\Vr$ commutes with $(-)^\dagger$. 

\begin{proposition}\label{l:Vr and dagger}
\hfill\begin{enumerate}[label=\normalfont(\arabic*), ref = \arabic*]
\item\label{l:Vr and dagger:item1} $\Vrbox \circ (-)^\dagger = (-)^\dagger \circ \Vrdia$ and $\Vrdia \circ (-)^\dagger = (-)^\dagger \circ \Vrbox$.
\item\label{l:Vr and dagger:item2} $\Vr \circ (-)^\dagger = (-)^\dagger \circ \Vr$.
\end{enumerate}
\end{proposition}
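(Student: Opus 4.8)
The plan is to prove both parts by directly unwinding the definitions of $\Vrbox$, $\Vrdia$, $\Vr$ and of the dagger, exploiting that $(-)^\dagger$ sends a closed relation to its converse, that the converse operation is an involution, and that it distributes over intersections of relations. On objects all the functors in sight act as $X \mapsto \V(X)$ (the dagger being the identity on objects), so the only content is the behavior on morphisms.

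First I would establish \eqref{l:Vr and dagger:item1}. Fix a closed relation $R \colon X \to Y$ together with closed sets $F \in \V(X)$ and $G \in \V(Y)$; since both $\Vrbox(R^\dagger)$ and $(\Vrdia(R))^\dagger$ are relations from $\V(Y)$ to $\V(X)$, it suffices to compare when $G$ is related to $F$. On the one hand, $G \mathrel{\Vrbox(R^\dagger)} F$ unwinds to $F \subseteq R^\dagger[G] = R^{-1}[G]$; on the other hand, $G \mathrel{(\Vrdia(R))^\dagger} F$ means $F \Rdiarel G$, i.e.\ $F \subseteq R^{-1}[G]$. These two conditions coincide, so $\Vrbox \circ (-)^\dagger = (-)^\dagger \circ \Vrdia$. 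For the second identity I would avoid repeating the computation: since $(-)^\dagger$ is an involution on $\KHausR$, substituting $R^\dagger$ for $R$ in the identity just proved gives $\Vrbox(R) = (\Vrdia(R^\dagger))^\dagger$, and applying $(-)^\dagger$ to both sides yields $(\Vrbox(R))^\dagger = \Vrdia(R^\dagger)$, which is exactly $\Vrdia \circ (-)^\dagger = (-)^\dagger \circ \Vrbox$.

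For \eqref{l:Vr and dagger:item2} I would use that $\Vr(R) = \Rbox \cap \Rdia = \Vrbox(R) \cap \Vrdia(R)$ by definition, together with the elementary fact that converse distributes over intersection, to obtain $(\Vr(R))^\dagger = (\Vrbox(R))^\dagger \cap (\Vrdia(R))^\dagger$. Part \eqref{l:Vr and dagger:item1} then rewrites the right-hand side as $\Vrdia(R^\dagger) \cap \Vrbox(R^\dagger) = \Vr(R^\dagger)$, completing the proof.

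I do not anticipate a genuine obstacle here: the argument is essentially bookkeeping with the definitions. The only points worth stating explicitly are that all the relations involved are closed — hence bona fide morphisms of $\KHausR$, which is precisely \cref{lem:R R1 R2 closed} — and that $\KHausR$ really is a dagger category under converse, so that the composites written above are meaningful; both of these are recorded in the text preceding the statement.
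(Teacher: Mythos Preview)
Your proof is correct and follows essentially the same route as the paper: both unwind the definitions on morphisms and use part \eqref{l:Vr and dagger:item1} to obtain \eqref{l:Vr and dagger:item2}. The only minor differences are that for the second equality in \eqref{l:Vr and dagger:item1} you derive it from the first via the involution $(-)^{\dagger\dagger}=\mathrm{id}$ (the paper simply says it is ``proved similarly''), and for \eqref{l:Vr and dagger:item2} you invoke distributivity of converse over intersection explicitly whereas the paper writes out the same fact pointwise.
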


\begin{proof}
Since $\Vrbox, \Vrdia, \Vr$ coincide on objects and $(-)^\dagger$ fixes the objects, we only need to show that the compositions agree on the morphisms.

\eqref{l:Vr and dagger:item1}. We only prove the first equality since the second is proved similarly. Let $R \colon X \to Y$ be a morphism in $\KHausR$. For $F \in \V(X)$ and $G \in \V(Y)$ we have 
\begin{align*}
G \mathrel{\Vrbox(R^\dagger)} F &\iff F \subseteq R^\dagger[G] \iff F \subseteq R^{-1}[G] \iff F \mathrel{\Vrdia(R)} G \iff G \mathrel{\Vrdia(R)^\dagger} F.
\end{align*} 

\eqref{l:Vr and dagger:item2}.
Let $R \colon X \to Y$ be a morphism in $\KHausR$. If $F \in \V(X)$ and $G \in \V(Y)$, then \eqref{l:Vr and dagger:item1} implies 
\begin{align*}
G \mathrel{\Vr(R^\dagger)} F &\iff G \mathrel{\Vrbox(R^\dagger)} F \text{ and } G \mathrel{\Vrdia(R^\dagger)} F \iff G \mathrel{\Vrdia(R)^\dagger} F \text{ and } G \mathrel{\Vrbox(R)^\dagger} F\\
&\iff F \mathrel{\Vrdia(R)} G \text{ and } F \mathrel{\Vrbox(R)} G \iff G \mathrel{\Vr(R)^\dagger} F.\qedhere
\end{align*} 
\end{proof}

\begin{remark}\label{r:Vr and dagger functor}
We point out that \cref{l:Vr and dagger}\eqref{l:Vr and dagger:item1} simplifies the proof of \cref{lem:R R1 R2 closed}. Indeed, if $R \colon X \to Y$ is a closed relation, then $\Vrdia(R)=\Vrbox(R^\dagger)^\dagger$, and so is closed. Thus, it is enough to prove that $\Vrbox(R)$ is closed.
\end{remark}

\section{Closed relations and subordinations} \label{sec: subordinations}

In this section we recall the definition of subordination relations and the results connecting them to closed relations between compact Hausdorff spaces.
Subordinations on boolean algebras were introduced in \cite{BBSV17}. They are closely related to precontact relations \cite{DV06,DuentschVakarelov2007} and quasi-modal operators \cite{Celani2001}.

\begin{definition}\label{d:subordination} \cite[Def.~2.3]{BBSV17}
We call a relation $S \colon A \to B$ between boolean algebras a \emph{subordination relation} if it satisfies the following axioms, where $a,b\in A$ and $c,d\in B$:
\begin{enumerate}[label = (S\arabic*), ref = S\arabic*]
\item \label{S1} $0 \S 0$ and $1 \S 1$,
\item \label{S2} $a,b \S c$ implies $(a\vee b) \S c$, 
\item \label{S3} $a \S c,d$ implies $a \S (c\wedge d)$,  
\item \label{S4} $a\le b \S c\le d$ implies $a \S d$. 
\end{enumerate}
\end{definition}

Boolean algebras and subordination relations between them form a category, which we denote by $\BAS$. The identity on $B \in \BAS$ is the order $\le$ on $B$ and the composition is relation composition. We have the following generalization of Stone duality, which was obtained in \cite[Cor.~2.6]{ABC22a} by utilizing a result of Celani \cite[Thm.~4]{Cel18}. It is also a consequence of a more general result of Jung, Kurz, and Moshier \cite[Thm.~5.5.9]{KurzMoshierEtAl2023}.

\begin{theorem}\label{t:StoneR BAS equiv}
The categories $\StoneR$ and $\BAS$ are equivalent.
\end{theorem}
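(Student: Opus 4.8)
The plan is to establish the equivalence $\StoneR \simeq \BAS$ by extending the classical Stone duality between $\Stone$ and $\ba$ along both categories' morphisms. The key idea is that a closed relation $R \colon X \to Y$ between Stone spaces should correspond to a subordination relation $S_R \colon \Clop(Y) \to \Clop(X)$ defined by
\[
a \mathrel{S_R} b \iff R^{-1}[\widehat a] \subseteq \widehat b^{\,\circ} \quad \text{(equivalently, } \widehat a \subseteq -R[-\widehat b]\text{)},
\]
where $\widehat a$ denotes the clopen subset of $Y$ corresponding to $a\in\Clop(Y)$, and conversely a subordination $S \colon A \to B$ should correspond to the closed relation on the dual Stone spaces given, on the level of ultrafilters $x$ of $B$ and $y$ of $A$, by $x \mathrel{R_S} y \iff (a \mathrel{S} b \text{ and } b \in x \Rightarrow a \in y)$, i.e.\ $\{a : \exists b\in x,\ a\mathrel{S} b\} \subseteq y$. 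First I would recall that $\Clop$ and $\Uf$ (taking ultrafilter spaces) implement the object-level equivalence of the classical Stone duality, so it suffices to check the two assignments above are well defined, functorial, and mutually inverse up to natural isomorphism, and that they agree with the classical ones on the wide subcategories $\Stone$ and $\ba$ of continuous maps / boolean homomorphisms sitting inside $\StoneR$ and $\BAS$.

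The verification breaks into the following steps. First, show $S_R$ satisfies \eqref{S1}--\eqref{S4}: \eqref{S4} is immediate from monotonicity of $R[-]$ and $R^{-1}[-]$; \eqref{S2} and \eqref{S3} follow because $R^{-1}[\widehat{a_1}\cup\widehat{a_2}] = R^{-1}[\widehat{a_1}]\cup R^{-1}[\widehat{a_2}]$ and $R^{-1}$ distributes suitably, using that clopens are closed under finite unions and intersections; \eqref{S1} is a direct check using $R^{-1}[\varnothing]=\varnothing$ and $R^{-1}[Y]\subseteq X$. Second, show $R_S$ is a closed relation: this is the point where \Cref{lem:R closed equiv condition}\eqref{lem:R closed equiv condition:item3} does the work — given $(x,y)\notin R_S$, pick a witnessing pair $a\mathrel{S} b$ with $b\in x$ but $a\notin y$, and use the clopen neighborhoods determined by $b$ (around $x$) and $-a$ (around $y$) together with axiom \eqref{S4} to separate; compactness of the Stone spaces guarantees the relevant sets are clopen and the argument closes. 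Third, check functoriality of both assignments, i.e.\ that composition of closed relations corresponds to composition of subordinations (relation composition on both sides) and identities to identities — here the identity subordination is the order $\le$, matching the identity relation on the dual space. Fourth, verify the two round-trips: starting from $R$, the relation $R_{S_R}$ equals $R$ (both directions of inclusion, using that ultrafilters of $\Clop(X)$ are the points of $X$ and that $R$ is closed, hence recoverable from its action on clopens via \Cref{lem:R closed equiv condition}), and starting from $S$, the subordination $S_{R_S}$ equals $S$ (one inclusion is formal; the other uses a separation/ultrafilter argument: if $a \mathrel{S_{R_S}} b$ fails to follow from $a\mathrel{S} b$, build ultrafilters witnessing the failure of the clopen inclusion and derive a contradiction with the definition of $R_S$). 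Finally, assemble these into an equivalence of categories, noting naturality of the comparison isomorphisms, which is inherited from naturality in the classical Stone duality.

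I expect the main obstacle to be the round-trip $S_{R_S} = S$, specifically the nontrivial inclusion showing that if $a \mathrel{S} b$ does \emph{not} hold then $a \mathrel{S_{R_S}} b$ does not hold either. This requires producing an ultrafilter $x$ on $B$ with $b \in x$ and an ultrafilter $y$ on $A$ with $a\notin y$ such that $x \mathrel{R_S} y$, i.e.\ such that $y$ contains $\{c : \exists d\in x,\ c \mathrel{S} d\}$; the existence of such $y$ disjoint from the filter generated by $a$ is a Zorn/prime-filter argument that must be set up carefully so that the constraint from $x$ is consistent with excluding $a$ — this is exactly where axioms \eqref{S2} and \eqref{S3} (closure of the subordination under finite joins on the left and finite meets on the right) are needed to see that $\{c : \exists d\in x,\ c\mathrel{S} d\}$ is a filter not containing $a$. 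Everything else is routine bookkeeping built on top of the classical duality, so I would present those steps compactly and spend the bulk of the proof on this filter-extension argument. Since the result is attributed to prior work (\cite{Cel18,KurzMoshierEtAl2023,ABC22a}), in the paper itself it is reasonable to cite those references for the detailed verification rather than reproduce it in full.
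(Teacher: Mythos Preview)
The paper does not prove this theorem; it cites \cite{Cel18,KurzMoshierEtAl2023,ABC22a} and records the two functors in \cref{r:Clop and Uf}. Your closing remark that one may simply cite those references is therefore exactly what the paper does. Your sketch of a direct proof is broadly the right strategy, but there is a genuine error in your definition of $R_S$, and it matters because you build the ``hard part'' (the round-trip $S_{R_S}=S$) on it.

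You take $x\in\Uf(B)$, $y\in\Uf(A)$ and set $x \mathrel{R_S} y$ to mean $\{a:\exists b\in x,\ a\mathrel{S}b\}\subseteq y$, and later assert this set is a filter. It is not. Axiom \eqref{S4} lets you \emph{decrease} the left argument of $S$, so the set is downward closed; together with \eqref{S2} it is closed under finite joins; and since $1\mathrel{S}1$ by \eqref{S1} and $0\le 1$, \eqref{S4} gives $0\mathrel{S}1$ with $1\in x$, so $0$ belongs to the set. Thus it is an ideal containing $0$, and such a set is never contained in a proper ultrafilter~$y$. With your definition, $R_S$ is the empty relation for every $S$, and the round-trip argument collapses.

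The fix is to reverse the roles. In the paper's covariant convention (\cref{r:Clop and Uf}) one takes $R_S\colon\Uf(A)\to\Uf(B)$ with $x\mathrel{R_S}y\iff S[x]\subseteq y$, where $S[x]=\{b:\exists a\in x,\ a\mathrel{S}b\}$ genuinely is a filter (closure under meets from \eqref{S3}+\eqref{S4}, upward closure from \eqref{S4}). If you prefer your contravariant convention---which is also fine, since it is the paper's equivalence composed with the daggers on both categories---the condition should read ``$a\mathrel{S}b$ and $a\in y\Rightarrow b\in x$'', i.e.\ $S[y]\subseteq x$. With that correction your outline, including the prime-filter extension for the nontrivial inclusion $S_{R_S}\subseteq S$, goes through. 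Note also that your $S_R\colon\Clop(Y)\to\Clop(X)$ via $R^{-1}$ differs from the paper's $S_R\colon\Clop(X)\to\Clop(Y)$ via $R$; this is harmless (it is the dagger of the paper's choice) but should be kept consistent with the corrected $R_S$.
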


\begin{remark}\label{r:Clop and Uf}
The functors $\Clop \colon \StoneR \to \BAS$ and $\Uf \colon \BAS \to \StoneR$ yielding the above equivalence are defined as follows. On objects they act as the clopen and ultrafilter functors of Stone duality. On morphisms they act as follows. 
If $R \colon X \to Y$ is a closed relation between Stone spaces, then $\Clop(R)$ is the subordination relation $S_R \colon \Clop(X) \to \Clop(Y)$ given by 
\[
U \S_R V \iff R[U] \subseteq V.
\] 
If $S \colon A \to B$ is a subordination relation between boolean algebras, then $\Uf(S)$ is the closed relation $R_S \colon \Uf(A) \to \Uf(B)$ given by 
\[
x \mathrel{R_S} y \iff S[x] \subseteq y.
\]
\end{remark}

\begin{definition}\cite[p.~7]{ABC22a}
An \emph{$\mathsf{S5}$-subordination space} is a pair $(X, E)$ where $X$ is a Stone space and $E$ is a closed equivalence relation on $X$.  A closed relation $R \subseteq X_1 \times X_2$ between $\mathsf{S5}$-subordination spaces $(X_1,E_1)$ and $(X_2,E_2)$ is \emph{compatible} if $R \circ E_1 = R = E_2 \circ R$.
We let $\StoneER$ be the category of $\mathsf{S5}$-subordination spaces and compatible closed relations.
\end{definition}

\begin{theorem}{\cite[Cor.~3.12]{ABC22a}}\label{t:StoneER equiv KHausR}
The categories $\StoneER$ and $\KHausR$ are equivalent.
\end{theorem}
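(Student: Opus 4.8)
The plan is to recall the construction from \cite{ABC22a} and sketch why it yields an equivalence, rather than redoing the full allegory-theoretic argument. First I would describe the two functors. In one direction, given a compact Hausdorff space $K$, one takes its Gleason cover (or, equivalently, the Stone space of the boolean algebra $\RO(K)$ of regular opens of $K$); call it $X$, together with the canonical irreducible surjection $\pi \colon X \to K$. The kernel $E = \{(x,x') \mid \pi(x) = \pi(x')\}$ is a closed equivalence relation on $X$, so $(X,E)$ is an $\mathsf{S5}$-subordination space. In the other direction, given an $\mathsf{S5}$-subordination space $(X,E)$, one forms the quotient $X/E$, which is compact Hausdorff since $E$ is closed. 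On morphisms: a closed relation $R \colon K_1 \to K_2$ lifts to the compatible closed relation $\widetilde R = \pi_2^{-1} \circ R \circ \pi_1$ between the Gleason covers, where $\pi_i \colon X_i \to K_i$ are the covering maps; conversely a compatible closed relation $R \colon (X_1,E_1) \to (X_2,E_2)$ descends to the relation on the quotients defined by $[x]_{E_1} \mathbin{\bar R} [y]_{E_2}$ iff $x \mathbin{R} y$, which is well-defined and closed precisely because of the compatibility condition $R \circ E_1 = R = E_2 \circ R$.

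The key steps, in order, are: (1) verify that $\widetilde R$ is indeed compatible, i.e.\ $\widetilde R \circ E_1 = \widetilde R = E_2 \circ \widetilde R$, using that $E_i$ is the kernel of $\pi_i$; (2) verify that $\bar R$ is closed — here I would use \cref{lem:R closed equiv condition}, checking condition \eqref{lem:R closed equiv condition:item3} by pulling apart points in the quotient via open sets in $X_i$ saturated under $E_i$, which exist because $E_i$ is closed and hence the quotient map is closed; (3) check functoriality of both assignments, which reduces to the fact that relation composition is preserved — the compatibility identities are exactly what make $\overline{R_2 \circ R_1} = \bar R_2 \circ \bar R_1$ and the analogous statement for lifts; (4) exhibit the natural isomorphisms witnessing the equivalence. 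For a space $K$, the composite $X \mapsto X/E$ recovers $K$ up to homeomorphism via $\pi$, giving one natural isomorphism (in $\KHausR$, using that homeomorphisms are invertible closed relations). For an $\mathsf{S5}$-subordination space $(X,E)$, the composite produces the Gleason cover of $X/E$; this is isomorphic to $(X,E)$ in $\StoneER$ — not on the nose, but via a suitable invertible compatible closed relation — because the Gleason cover of $X/E$ and $X$ both sit over $X/E$ as irreducible surjections from Stone spaces with the same kernel structure, and any two such are related by an invertible span. The naturality of these isomorphisms with respect to closed relations needs the dagger structure or a direct diagram chase.

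The main obstacle I expect is step (4), specifically showing that the round-trip on $\StoneER$ gives something isomorphic to the identity \emph{in the category $\StoneER$}, where the isomorphisms are invertible compatible closed relations rather than maps. One must be careful: $X$ need not be homeomorphic to the Gleason cover of $X/E$ (the latter is the Stone space of $\RO(X/E)$, while $X$ is merely \emph{some} Stone space with a closed equivalence relation whose quotient is $X/E$), so the isomorphism lives genuinely at the level of relations. The cleanest route is probably to invoke the machinery of allegories and splittings of symmetric idempotents, as \cite{ABC22a} does: $\StoneER$ arises as the category of $\mathsf{S5}$-objects (coreflexive-free split idempotents) in the allegory of closed relations on Stone spaces, and $\KHausR$ is equivalent to this by the general allegory-theoretic fact that such a category is equivalent to its Karoubi-type completion, which here is $\KHausR$. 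I would cite \cref{t:StoneER equiv KHausR} as the statement and point to \cite[Cor.~3.12]{ABC22a} and the allegory-theoretic development there (\cite{FS90}) for the detailed verification, sketching only the functors and the composition-preservation check explicitly.
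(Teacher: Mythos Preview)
Your proposal is correct and aligns with the paper's treatment: the paper does not prove this theorem but simply cites \cite[Cor.~3.12]{ABC22a}, and the subsequent remark (\cref{rem:G and Q}) describes exactly the functors $\Q$ (quotient) and $\G$ (Gleason cover) you sketch. Your identification of the allegory-theoretic machinery of \cite{FS90,ABC22a} as the cleanest route for step~(4) is also what \cite{ABC22a} actually does, so your plan to cite that reference for the detailed verification while sketching only the functors matches the paper's approach.
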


\begin{remark}\label{rem:G and Q}
The above equivalence is established by the functor $\Q \colon \StoneER \to \KHausR$ that maps $(X,E) \in \StoneER$ to the quotient space $X/E$ and a morphism $R \colon (X_1,E_1) \to (X_2,E_2)$ in $\StoneER$ to the closed relation $\Q(R)\coloneqq\pi_2 \circ R \circ \pi_1^\dagger$, where $\pi_1 \colon X_1 \to X_1/E_1$ and $\pi_2 \colon X_2 \to X_2/E_2$ are the projection maps.
A quasi-inverse of $\Q$ is defined by utilizing the well-known Gleason cover construction (see, e.g., \cite[Sec.~III.3]{Joh82}).
For a compact Hausdorff space $X$ let $\G(X)=(\widehat{X}, E)$ where $g_X \colon \widehat{X} \to X$ is the Gleason cover of $X$ and $x \E y$ iff $g_X(x)=g_X(y)$. For a closed relation $R \colon X \to Y$ let $\G(R) \colon \G(X) \to \G(Y)$ be given by $\G(R) \coloneqq g_Y^\dagger \circ R \circ g_X$.
This defines the Gleason cover functor $\G \colon \KHausR \to \StoneER$, which is a quasi-inverse of $\Q$ \cite[Thm.~4.6]{ABC22a}. 
\end{remark}

\begin{definition}
\hfill
\begin{enumerate}
\item We say that a subordination $S$ on a boolean algebra $B$ is an {\em $\mathsf{S5}$-subordination} if $S$ satisfies the following axioms, where $a,b \in B$:
\begin{enumerate}[label = (S\arabic*), ref = S\arabic*, start = 5]
\item \label{S5} $a \S b$ implies $a \le b$,
\item \label{S6} $a \S b$ implies $\neg b \S \neg a$,
\item \label{S7} $a \S b$ implies there is $c \in B$ such that $a \S c$ and $c \S b$.
\end{enumerate}
\item An {\em $\mathsf{S5}$-subordination algebra} is a pair $(B,S)$ where $B$ is a boolean algebra and $S$ is an $\mathsf{S5}$-subordination on $B$. 
\item  A subordination 
$T \subseteq B_1 \times B_2$  between $\mathsf{S5}$-subordination algebras
$(B_1,S_1)$ and $(B_2,S_2)$ is \emph{compatible} if $T \circ S_1 = T = S_2 \circ T$.
\item Let $\SubSfive$ be the category of $\mathsf{S5}$-subordination algebras and compatible subordinations.
\end{enumerate}
\end{definition}

De Vries algebras are special $\mathsf{S5}$-subordination algebras:

\begin{definition}
\hfill\begin{enumerate}
\item A \emph{de Vries algebra} is an $\mathsf{S5}$-subordination algebra $(B,S)$ such that $B$ is complete and $S$ satisfies
\begin{enumerate}[label = (S\arabic*), ref = S\arabic*, start = 8]
\item \label{S8} $b \neq 0$ implies that there is $a \neq 0$  such that $a \S b$.
\end{enumerate}
\item Let $\devS$ be the full subcategory of $\SubSfive$ consisting of de Vries algebras.
\end{enumerate}
\end{definition}

\begin{theorem}{\cite[Cor.~3.14, 4.7]{ABC22a}}\label{t:StoneER equiv SubSfive}
The categories $\KHausR$, $\StoneER$, $\SubSfive$, and $\devS$ are equivalent.
\end{theorem}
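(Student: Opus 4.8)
The plan is to build the four-way equivalence around the equivalence $\KHausR \simeq \StoneER$ already supplied by \cref{t:StoneER equiv KHausR}, so that it only remains to prove $\StoneER \simeq \SubSfive$ and $\SubSfive \simeq \devS$. For the first of these I would lift the equivalence $\Clop \colon \StoneR \to \BAS$, $\Uf \colon \BAS \to \StoneR$ of \cref{t:StoneR BAS equiv} along the extra structure. The key observation is that, under this equivalence, a closed equivalence relation $E$ on a Stone space $X$ corresponds exactly to an $\mathsf{S5}$-subordination on $\Clop(X)$: writing $S_E$ for the subordination with $U \mathrel{S_E} V \iff E[U]\subseteq V$ (as in \cref{r:Clop and Uf}), reflexivity of $E$ yields (S5) because $U\subseteq E[U]$; symmetry yields (S6) because $E[U]\subseteq V$ forces $E[-V]\subseteq -U$; and transitivity together with closedness of $E$ yields (S7), since then $E[U]$ is a closed subset of the open set $-E[-V]$, so zero-dimensionality provides a clopen $W$ with $E[U]\subseteq W\subseteq -E[-V]$, which witnesses $U\mathrel{S_E} W\mathrel{S_E} V$. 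Dually, for an $\mathsf{S5}$-subordination $S$ on $B$ the relation $R_S$ on $\Uf(B)$ of \cref{r:Clop and Uf} is automatically closed, and (S5), (S6), (S7) give reflexivity, symmetry, and transitivity respectively. Since $\Clop$ and $\Uf$ turn relation composition into composition in $\BAS$, a closed relation $R$ is compatible, i.e.\ $R\circ E_1 = R = E_2\circ R$, if and only if $S_R\circ S_{E_1} = S_R = S_{E_2}\circ S_R$. Being full and faithful, $\Clop$ and $\Uf$ therefore restrict to an equivalence $\StoneER \simeq \SubSfive$.

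For $\SubSfive \simeq \devS$ I would note that, $\devS$ being by definition a \emph{full} subcategory of $\SubSfive$, the inclusion is fully faithful, so it suffices to prove it essentially surjective: every $\mathsf{S5}$-subordination algebra $(B,S)$ is isomorphic in $\SubSfive$ to a de Vries algebra. For this I would pass to the MacNeille completion $e\colon B\hookrightarrow \overline B$ (a complete boolean algebra in which $B$ is join- and meet-dense) and extend $S$ to $\overline S$ on $\overline B$ by declaring $x\mathrel{\overline S} y$ iff there are $a,b\in B$ with $x\le e(a)$, $a\mathrel{S} b$, and $e(b)\le y$. One checks that $(\overline B,\overline S)$ satisfies (S1)--(S7), while (S8) follows from density, so $(\overline B,\overline S)\in\devS$. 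It then remains to produce mutually inverse $\SubSfive$-morphisms between $(B,S)$ and $(\overline B,\overline S)$: the subordination $T\colon B\to\overline B$ with $a\mathrel{T} x \iff \exists b\,(a\mathrel{S} b$ and $e(b)\le x)$, and $T'\colon\overline B\to B$ with $x\mathrel{T'} b \iff \exists a\,(x\le e(a)$ and $a\mathrel{S} b)$. Repeated use of interpolation (S7) then gives $T'\circ T = S$ and $T\circ T' = \overline S$, and these are precisely the identity morphisms on $(B,S)$ and $(\overline B,\overline S)$ in $\SubSfive$, the identity on an $\mathsf{S5}$-subordination algebra being the subordination itself (which is idempotent by (S5) and (S7)); compatibility of $T$ and $T'$ is a routine check. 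Combining this with Step~1 and \cref{t:StoneER equiv KHausR} yields $\KHausR \simeq \StoneER \simeq \SubSfive \simeq \devS$.

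I expect the real difficulty to lie in the second step. Transporting structure in the first step is routine once one has \cref{t:StoneR BAS equiv}, and amounts to the case analysis over the axioms (S5)--(S7) sketched above. The completion argument, by contrast, is delicate: because morphisms and identities in $\SubSfive$ are subordination \emph{relations} rather than functions, verifying $T'\circ T = S$ and $T\circ T'=\overline S$ requires a careful calculus of composition of subordinations and systematic appeals to the interpolation axiom (S7). This is exactly the content of the MacNeille construction for $\mathsf{S5}$-subordination algebras carried out in the companion work, and it is what powers the equivalence $\SubSfive\simeq\devS$.
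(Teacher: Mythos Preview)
Your overall plan matches the paper's: $\KHausR\simeq\StoneER$ is quoted, $\StoneER\simeq\SubSfive$ is obtained by lifting the $\Clop$/$\Uf$ equivalence of \cref{t:StoneR BAS equiv} (the paper phrases this as ``allegories and splitting equivalences'', but concretely it is exactly the componentwise lift you describe), and $\SubSfive\simeq\devS$ is proved by showing the full inclusion is essentially surjective via a MacNeille-type completion. Your sketch for the first two steps is correct.

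There is, however, a genuine gap in the third step. The ordinary MacNeille completion $e\colon B\hookrightarrow\overline B$ with $\overline S$ defined as you propose does \emph{not} produce a de Vries algebra: axiom \eqref{S8} can fail. Take the four-element boolean algebra $B=\{0,a,\neg a,1\}$ with the ``trivial'' $\mathsf{S5}$-subordination $x\mathrel{S}y\iff x=0$ or $y=1$ (one checks \eqref{S1}--\eqref{S7} directly). Since $B$ is finite, $\overline B=B$ and your $\overline S$ equals $S$; but \eqref{S8} fails because no nonzero element is $S$-below $a$. So ``density'' alone cannot rescue \eqref{S8}. The point is that $(B,S)$ is isomorphic \emph{in $\SubSfive$} not to its Boolean MacNeille completion but to the two-element de Vries algebra: morphisms in $\SubSfive$ are relations, and the identity on $(B,S)$ is $S$ itself, so passing to a de Vries algebra generally requires collapsing elements that are $S$-indistinguishable. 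This is why the companion paper replaces the ordinary MacNeille completion by the poset $\mathcal{NI}(B,S)$ of \emph{normal $S$-ideals} (ideals $I$ with $I=S^{-1}[(S[I^u])^\ell]$); that construction simultaneously completes and quotients, and it is what forces \eqref{S8}. Your outline becomes correct once the naive completion is replaced by this one, but the verification that $\mathcal{NI}(B,S)$ is a de Vries algebra and that the canonical subordinations between $B$ and $\mathcal{NI}(B,S)$ are mutually inverse in $\SubSfive$ is the substantive work you should expect to do.
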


The equivalence of $\StoneER$ and $\SubSfive$ was obtained by utilizing the machinery of allegories and splitting equivalences. 
As we observed in the previous section, $\KHausR$ is a dagger category and $\Vr$ commutes with the dagger. In addition, ordering the hom-sets by inclusion turns $\KHausR$ into an order enriched category (meaning that each hom-set is partially ordered and composition preserves the order; see, e.g., \cite[p.~105]{ST14}).

An \emph{allegory} \cite{FS90,Joh02} is an order-enriched dagger category such that
\begin{enumerate}
\item each hom-set has binary meets,
\item $(-)^\dagger$ preserves the order on the hom-sets,
\item the modular law holds: $g f \wedge h \le (g \wedge h f^\dagger) f$  for all $f \colon C \to D$, $g \colon D \to E$, and $h \colon C \to E$.
\end{enumerate}

We already saw that $\Vr$ commutes with the dagger and it is straightforward to see that $\Vr$ preserves inclusions of relations. Therefore, $\Vr$ is a morphism of ordered categories with involution (see \cite{TsalenkoGisinEtAl1984,Lambek1999}).
However, as we will see in the next example, $\Vr$ does not preserve the meet, and hence is not a morphism of allegories (see \cite{Joh82,ABC22a}).

\begin{example}\label{ex:VR not morph allegories}
Since $\Vr$ preserves inclusions of relations, $\Vr(R_1 \cap R_2) \subseteq \Vr(R_1) \cap \Vr(R_2)$ for any pair of closed relations $R_1,R_2 \colon X \to Y$. We show that the other inclusion does not hold in general.
Let $X$ be a finite discrete space with at least two elements. We let $R_1 \colon X \to X$ be the identity and $R_2 \colon X \to X$ its complement.
Clearly $X$ is compact Hausdorff and $R_1,R_2$ are closed. Moreover, $R_1 \cap R_2 = \varnothing$. Thus, $X \mathrel{\Vr(R_1 \cap R_2)} X$ does not hold. However, $X=R_1^{-1}[X]=R_1[X]$ and $X=R_2^{-1}[X]=R_2[X]$ because $X$ has at least two elements. Therefore, $X \mathrel{(\Vr(R_1) \cap \Vr(R_2))} X$ holds. 
\end{example}

Our goal is to describe an endofunctor on $\SubSfive$ that corresponds to the Vietoris endofunctor $\V^{\sf R}$ on $\KHausR$, and then lift it to an endofunctor on $\devS$.
For this we first generalize the functor $\K \colon \ba \to \ba$ dual to $\V \colon \Stone \to \Stone$ (see the Introduction).

\section{The endofunctor \texorpdfstring{$\K$}{K} on \texorpdfstring{$\ba$}{BA}}

We recall the definition of the endofunctor $\K$ on $\ba$.
There are several equivalent approaches to defining $\K$. We will follow the one given in \cite{VV14}, which in turn relies on \cite{Joh85}.

\begin{definition}\label{def:KB}
	For $B\in\ba$ let $X=\{\Box_a\mid a\in B\}\cup\{\Diamond_a\mid a\in B\}$ be a set of symbols and define $\K(B)$ as the quotient of the free boolean algebra over $X$ by the relations
	\[
	\begin{array}{lll}
		\Box_1 = 1, & \quad & \Diamond_0 = 0, \\
		\Box_{a\wedge b} = \Box_a \wedge \Box_b, & \quad & \Diamond_{a\vee b} = \Diamond_a \vee \Diamond_b, \\
		\Box_{a\vee b} \le \Box_a \vee \Diamond_b, & \quad & \Box_a \wedge \Diamond_b \le \Diamond_{a\wedge b}.
	\end{array}
	\] 
\end{definition}

With a small abuse of notation, we write $\Box_a$ and $\Diamond_a$ also for the equivalence classes $[\Box_a]$ and $[\Diamond_a]$ in $\K(B)$.

\begin{remark}\label{rem:facts about KB}
	\hfill\begin{enumerate}
		\item \label{i:interdefinition}
			The last two inequalities in Definition~\ref{def:KB} can be replaced by the equation $\Box_a=\neg \Diamond_{\neg a}$, which is equivalent to $\Diamond_a=\neg \Box_{\neg a}$.
		Indeed, the two inequalities imply
		\begin{equation*}
			1=\Box_{1}=\Box_{a\vee \neg a} \le \Box_a \vee \Diamond_{\neg a} \quad \mbox{and} \quad \Box_a \wedge \Diamond_{\neg a} \le \Diamond_{a\wedge \neg a}=\Diamond_0=0.
		\end{equation*}
		Therefore, $\neg\Box_a=\Diamond_{\neg a}$, and hence $\Box_a=\neg \Diamond_{\neg a}$. Conversely, since 
		\begin{align*}
			\Box_{a\vee b} \wedge \Box_{\neg b}=\Box_{a\wedge \neg b} \le \Box_a \quad \mbox{and} \quad \Diamond_b \le \Diamond_{b \vee \neg a}= \Diamond_{a\wedge b} \vee \Diamond_{\neg a},
		\end{align*}
		it follows that
		\begin{align*}
			\Box_{a\vee b} \le \Box_a \vee \neg \Box_{\neg b} \quad \mbox{and} \quad \neg \Diamond_{\neg a} \wedge \Diamond_b \le \Diamond_{a\wedge b}.
		\end{align*}
		Thus, 
		\begin{align*}
			\Box_{a\vee b} \le \Box_a \vee \Diamond_b \quad \mbox{and} \quad \Box_a \wedge \Diamond_b \le \Diamond_{a\wedge b}.
		\end{align*}
		\item Equivalently, $\K(B)$ can be defined as the quotient of the free boolean algebra over the set $\{\Box_a\mid a\in B\}$ by the relations $\Box_1 = 1$ and $\Box_{a\wedge b} = \Box_a \wedge \Box_b$ (see \cite[Rem.~3.13]{KKV04} and \cite[Rem.~1]{VV14}). Alternatively, $\K(B)$ can be defined as the quotient of the free boolean algebra over the set $\{\Diamond_a\mid a\in B\}$ by the relations $\Diamond_0 = 0$ and $\Diamond_{a\vee b} = \Diamond_a \vee \Diamond_b$ (see \cite[Sec.~7]{Abr88} and \cite[Def.~2.4]{BK07}).
	\end{enumerate}
\end{remark}

\begin{definition}
	Let $A,B$ be boolean algebras and $\alpha\colon A \to B$ a boolean homomorphism. The map $\K(\alpha)\colon\K(A) \to \K(B)$ is the unique boolean homomorphism satisfying $\K(f)(\Box_a)=\Box_{\alpha(a)}$ and $\K(f)(\Diamond_a)=\Diamond_{\alpha(a)}$.
	
\end{definition}

It is straightforward to see that this defines an endofunctor on $\ba$ (see \cite[p.~122]{VV14}).

\begin{proposition}
	$\K$ is an endofunctor on $\ba$.
\end{proposition}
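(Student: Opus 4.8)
The plan is to verify the two functor axioms: preservation of identities and preservation of composition. Both will follow from the universal property of the free boolean algebra together with the observation that the generators $\Box_a, \Diamond_a$ of $\K(B)$ have prescribed images.

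\textbf{Preservation of identities.} First I would check that $\K(\id_A) = \id_{\K(A)}$. By definition, $\K(\id_A)$ is the unique boolean homomorphism $\K(A) \to \K(A)$ sending $\Box_a \mapsto \Box_{\id_A(a)} = \Box_a$ and $\Diamond_a \mapsto \Diamond_{\id_A(a)} = \Diamond_a$. But the identity homomorphism $\id_{\K(A)}$ also has this property, and since $\{\Box_a \mid a \in A\} \cup \{\Diamond_a \mid a \in A\}$ generates $\K(A)$ as a boolean algebra, there is at most one such homomorphism. Hence $\K(\id_A) = \id_{\K(A)}$.

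\textbf{Preservation of composition.} Given boolean homomorphisms $\alpha \colon A \to B$ and $\beta \colon B \to C$, I would show $\K(\beta \circ \alpha) = \K(\beta) \circ \K(\alpha)$. Again both sides are boolean homomorphisms $\K(A) \to \K(C)$. Evaluating the right-hand side on a generator: $(\K(\beta) \circ \K(\alpha))(\Box_a) = \K(\beta)(\Box_{\alpha(a)}) = \Box_{\beta(\alpha(a))} = \Box_{(\beta \circ \alpha)(a)}$, and similarly for $\Diamond_a$. This is exactly the defining property of $\K(\beta \circ \alpha)$, so by the uniqueness clause in the definition of $\K$ on morphisms, the two homomorphisms coincide.

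\textbf{The one genuine point to check} — and the only place where anything beyond formal nonsense is needed — is that $\K(\alpha)$ is in fact \emph{well defined}, i.e.\ that there really exists a boolean homomorphism $\K(A) \to \K(B)$ sending $\Box_a \mapsto \Box_{\alpha(a)}$ and $\Diamond_a \mapsto \Diamond_{\alpha(a)}$. By the universal property of the free boolean algebra over the set of symbols $\{\Box_a, \Diamond_a \mid a \in A\}$, there is a unique homomorphism from that free algebra to $\K(B)$ realizing this assignment, and it descends to the quotient $\K(A)$ provided it respects the defining relations of \Cref{def:KB}. So I would verify that the images in $\K(B)$ of the two sides of each relation agree: for instance $\Box_{\alpha(a \wedge b)} = \Box_{\alpha(a) \wedge \alpha(b)} = \Box_{\alpha(a)} \wedge \Box_{\alpha(b)}$ using that $\alpha$ preserves $\wedge$ and that the relation $\Box_{x \wedge y} = \Box_x \wedge \Box_y$ holds in $\K(B)$; the relations $\Box_1 = 1$, $\Diamond_0 = 0$, $\Diamond_{a \vee b} = \Diamond_a \vee \Diamond_b$, $\Box_{a \vee b} \le \Box_a \vee \Diamond_b$, and $\Box_a \wedge \Diamond_b \le \Diamond_{a \wedge b}$ are handled identically, each time invoking the corresponding preservation property of the boolean homomorphism $\alpha$. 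This established, the universal property of the quotient gives the desired $\K(\alpha)$, and uniqueness is automatic since the generators are hit. I do not expect any real obstacle here; the verification is entirely routine, which is why the statement can be asserted with only the phrase ``it is straightforward to see.''
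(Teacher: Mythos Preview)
Your proposal is correct and is exactly the routine verification the paper has in mind; the paper gives no proof beyond the phrase ``It is straightforward to see'' (with a citation to \cite[p.~122]{VV14}), and your argument via the universal property of the free-and-quotient construction is the intended one.
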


As suggested by the notation, there is a close connection between $\K(B)$ and modal operators on $B$.
We recall that a {\em modal operator} on $B$ is a function $\Box\colon B \to B$ satisfying $\Box 1 = 1$ and $\Box (a \wedge b)=\Box a \wedge \Box b$ (equivalently, $\Box$ preserves finite meets). A {\em modal algebra} is a pair $(B,\Box)$ where $\Box$ is a modal operator on $B$. Let $\ma$ be the category of modal algebras and modal homomorphisms (i.e.\ boolean homomorphisms preserving $\Box$).

\begin{remark}
	Modal algebras can alternatively be defined as pairs $(B,\Diamond)$ where $\Diamond 0=0$ and $\Diamond (a \vee b)=\Diamond a \vee \Diamond b$ (equivalently, $\Diamond$ preserves finite joins). The two operators $\Box$ and $\Diamond$ are interdefinable by the well-known identities $\Diamond a= \neg \Box \neg a$ and $\Box a=\neg \Diamond \neg a$.
\end{remark}

The next result is well known; see \cite[Fact~3]{VV14}. For the definition of algebras for an endofunctor see, e.g., \cite[Def.~5.37]{AHS06}.

\begin{theorem}\label{thm:VV}
	\begin{enumerate}[label=\normalfont(\arabic*), ref = \arabic*]
		\item[]
		\item \label{i:bij}
		For a boolean algebra $B$, there is a bijection between modal operators on $B$ and boolean homomorphisms $\K(B)\to B$.
		\item This bijection extends to an isomorphism between $\ma$ and the category $\Alg(\K)$ of algebras for the endofunctor $\K\colon \ba\to\ba$.
	\end{enumerate}
\end{theorem}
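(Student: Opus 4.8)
The plan is to read off everything from the presentation of $\K(B)$ by generators and relations, in the simplified form recorded in \cref{rem:facts about KB}.

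For part~\eqref{i:bij}: by \cref{rem:facts about KB}\eqref{i:interdefinition}, $\K(B)$ is (up to isomorphism) the quotient of the free boolean algebra on the set $\{\Box_a \mid a \in B\}$ by the relations $\Box_1 = 1$ and $\Box_{a \wedge b} = \Box_a \wedge \Box_b$; in particular the elements $\Box_a$ generate $\K(B)$. By the universal property of this presentation, a boolean homomorphism $h \colon \K(B) \to B$ is uniquely determined by the family $\bigl(h(\Box_a)\bigr)_{a \in B}$, and conversely every family $(b_a)_{a \in B}$ in $B$ with $b_1 = 1$ and $b_{a \wedge b} = b_a \wedge b_b$ arises from a unique such $h$. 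Setting $\Box a := h(\Box_a)$ identifies these families precisely with the modal operators on $B$. Hence $h \mapsto \bigl(a \mapsto h(\Box_a)\bigr)$ is a bijection from the set of boolean homomorphisms $\K(B) \to B$ onto the set of modal operators on $B$, with inverse sending $\Box$ to the unique homomorphism $h_\Box$ satisfying $h_\Box(\Box_a) = \Box a$; note that then $h_\Box(\Diamond_a) = h_\Box(\neg \Box_{\neg a}) = \neg\Box\neg a = \Diamond a$, so the bijection is the expected one.

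For part~(2): an object of $\Alg(\K)$ is a pair $(B,h)$ with $h\colon\K(B)\to B$ a boolean homomorphism, and a morphism $\alpha\colon(B,h)\to(B',h')$ is a boolean homomorphism $\alpha\colon B\to B'$ with $\alpha\circ h = h'\circ\K(\alpha)$. Define $\Phi\colon\Alg(\K)\to\ma$ by sending $(B,h)$ to $(B,\Box_h)$, where $\Box_h a := h(\Box_a)$, and sending each morphism to itself. By part~\eqref{i:bij}, $\Phi$ is a bijection on objects. To see that it is bijective on morphisms it suffices to show, for fixed objects, that a boolean homomorphism $\alpha\colon B\to B'$ is a $\K$-algebra morphism iff it is a modal homomorphism $(B,\Box_h)\to(B',\Box_{h'})$. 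Since the $\Box_a$ generate $\K(B)$, the equality $\alpha\circ h = h'\circ\K(\alpha)$ of boolean homomorphisms holds iff it holds on each $\Box_a$; evaluating, the left side gives $\alpha(h(\Box_a)) = \alpha(\Box_h a)$ and the right side gives $h'(\K(\alpha)(\Box_a)) = h'(\Box_{\alpha(a)}) = \Box_{h'}(\alpha(a))$. Thus the $\K$-algebra condition is exactly $\alpha(\Box_h a) = \Box_{h'}(\alpha(a))$ for all $a\in B$, i.e.\ that $\alpha$ is a modal homomorphism. As $\Phi$ is the identity on underlying boolean algebras and on underlying homomorphisms, it automatically preserves identities and composition, so $\Phi$ is an isomorphism of categories, with inverse sending $(B,\Box)$ to $(B,h_\Box)$ and $\alpha$ to $\alpha$.

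The only load-bearing point is that the elements $\Box_a$ alone generate $\K(B)$, so that homomorphisms out of $\K(B)$ — and equalities between them — can be checked on the $\Box_a$; this is precisely what \cref{rem:facts about KB}\eqref{i:interdefinition} supplies via $\Diamond_a = \neg\Box_{\neg a}$. Everything else is a routine unwinding of the universal property of a presented boolean algebra together with the definition of $\Alg(\K)$, so I do not anticipate any genuine obstacle.
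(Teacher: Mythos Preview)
Your proof is correct. The paper does not actually prove this theorem: it states it as well known, citing \cite[Fact~3]{VV14}, and only records the explicit bijection $h \leftrightarrow \Box$ in the subsequent \cref{rem:modal operators}, which matches your construction exactly. One small point: the specific presentation of $\K(B)$ by generators $\{\Box_a\}$ alone and relations $\Box_1=1$, $\Box_{a\wedge b}=\Box_a\wedge\Box_b$ is item~(2) of \cref{rem:facts about KB}, not item~\eqref{i:interdefinition}; but since you only really need that the $\Box_a$ generate $\K(B)$ (so that homomorphisms out of $\K(B)$ are determined by, and equalities checked on, the $\Box_a$), the identity $\Diamond_a=\neg\Box_{\neg a}$ from item~\eqref{i:interdefinition} does suffice for your argument.
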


\begin{remark}\label{rem:modal operators}
	\hfill
	\begin{enumerate}
		\item The bijection of Theorem~\ref{thm:VV}\eqref{i:bij} is obtained by associating to each modal operator $\Box$ on $B$ the boolean homomorphism $\alpha\colon \K(B) \to B$ defined by $\alpha(\Box_a)=\Box a$; and to each boolean homomorphism $\alpha\colon \K(B) \to B$ the modal operator $\Box$ given by $\Box a=\alpha(\Box_a)$. 
		\item An analogous bijection holds if we replace $\Box$ by $\Diamond$.
	\end{enumerate}
\end{remark}

The next result is well known (see \cite{Abr88}, \cite{KKV04}, or \cite{VV14}). For the definition of coalgebras for an endofunctor see, e.g., \cite[Def.~140]{Ven07}.

\begin{theorem}
	The following diagram is commutative up to natural isomorphism. 
	\[
	\begin{tikzcd}[column sep=5pc, row sep=4pc]
		\Stone \arrow[r, shift left=1, "\Clop"] \arrow[d, "\V"'] & \ba \arrow[l, shift left=1, "\Uf"]  \arrow[d, "\K"] \\
		\Stone \arrow[r, shift left=1, "\Clop"]  & \ba \arrow[l, shift left=1, "\Uf"]
	\end{tikzcd}
	\]
	In other words, there are natural isomorphisms $\Clop\circ\V \simeq \K\circ\Clop$ and $\Uf\circ\K \simeq \V\circ\Uf$.
	Consequently, Stone duality extends to a dual equivalence between $\Alg(\K)$ and the category $\Coalg(\V)$ of coalgebras for the Vietoris endofunctor $\V\colon \Stone\to\Stone$. 
\end{theorem}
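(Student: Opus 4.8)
The plan is to establish the commutative diagram by constructing the two natural isomorphisms separately, then deriving the statement about algebras and coalgebras formally. First I would recall the unit and counit of Stone duality: for a Stone space $X$, the map $\varepsilon_X \colon X \to \Uf(\Clop(X))$ sending a point to the ultrafilter of clopens containing it is a homeomorphism, and for a boolean algebra $B$, the map $\eta_B \colon B \to \Clop(\Uf(B))$ sending $a$ to $\{x \in \Uf(B) \mid a \in x\}$ is an isomorphism. Because of these, the two asserted natural isomorphisms $\Clop \circ \V \simeq \K \circ \Clop$ and $\Uf \circ \K \simeq \V \circ \Uf$ are equivalent formulations, so it suffices to produce one of them; I would work with $\K \circ \Clop \simeq \Clop \circ \V$.

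The key step is to describe the clopen algebra of the Vietoris space. For a Stone space $X$ and a clopen $U$, the sets $\Box_U$ and $\Diamond_U$ are clopen in $\V(X)$ (this uses compactness: if $F \subseteq U$ then by compactness there is a clopen separating $F$ from $-U$, etc.), and by the basis lemma for $\V(X)$ together with the fact that in a Stone space clopens suffice, the boolean algebra $\Clop(\V(X))$ is generated by $\{\Box_U, \Diamond_U \mid U \in \Clop(X)\}$. Now I would invoke \cref{l:Johnstone axioms}, restricted to clopens: it shows that $U \mapsto \Box_U$ and $U \mapsto \Diamond_U$ satisfy exactly the defining relations of \cref{def:KB} (with directed/arbitrary joins collapsing to finite ones in the clopen setting since we only test on clopens). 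Hence there is a unique boolean homomorphism $\theta_X \colon \K(\Clop(X)) \to \Clop(\V(X))$ with $\theta_X(\Box_a) = \Box_a$ and $\theta_X(\Diamond_a) = \Diamond_a$. Surjectivity is immediate from the generation statement. For injectivity, I would argue via Stone duality: it is enough to show the dual map $\Uf(\theta_X) \colon \Uf(\Clop(\V(X))) \cong \V(X) \to \Uf(\K(\Clop(X)))$ is a bijection; using \cref{thm:VV}-style analysis of $\Uf(\K(B))$ — whose points correspond to pairs of a "box part" and "diamond part" compatible in the Egli–Milner sense — one identifies $\Uf(\K(\Clop(X)))$ with the set of closed subsets of $X$, recovering $\V(X)$. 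Naturality of $\theta$ in $X$ (for continuous maps, i.e. for the morphisms of $\Stone$) is a routine check on generators: both $\K(\Clop(f))$ and $\Clop(\V(f))$ send $\Box_U \mapsto \Box_{f^{-1}[U]}$ and $\Diamond_U \mapsto \Diamond_{f^{-1}[U]}$, which follows from $\V(f)(F) = f[F]$ and the elementary identities $f[F] \subseteq U \iff F \subseteq f^{-1}[U]$ and $f[F] \cap U \neq \varnothing \iff F \cap f^{-1}[U] \neq \varnothing$.

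Once the natural isomorphism $\K \circ \Clop \simeq \Clop \circ \V$ is in hand, the companion isomorphism $\Uf \circ \K \simeq \V \circ \Uf$ is obtained by conjugating with the unit and counit of Stone duality in the standard way (apply $\Uf$ to the first isomorphism and precompose/postcompose with $\eta$ and $\varepsilon$). The final sentence is then a formal consequence of general category theory: a dual equivalence of categories $\Clop \colon \Stone \leftrightarrows \ba \colon \Uf$ together with a natural isomorphism $\Clop \circ \V \simeq \K \circ \Clop$ lifts to a dual equivalence between $\Coalg(\V)$ and $\Alg(\K)$ — a $\V$-coalgebra $X \to \V(X)$ is sent to the composite $\K(\Clop(X)) \simeq \Clop(\V(X)) \to \Clop(X)$, a $\K$-algebra, and this assignment is functorial and inverse (up to iso) to the analogous construction in the other direction.

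The main obstacle I anticipate is the injectivity of $\theta_X$, i.e. showing that the six relations of \cref{def:KB} generate \emph{all} relations holding among the $\Box_U$ and $\Diamond_U$ in $\Clop(\V(X))$. This is the content that makes $\K$ the "right" dual of $\V$, and the cleanest route is the dualization sketched above: computing $\Uf(\K(B))$ explicitly and matching it with closed subsets of $\Uf(B)$ via the Egli–Milner description, which is essentially the point-level version of the relations. An alternative, if one wishes to avoid recomputing $\Uf(\K(B))$, is to cite the known presentation results referenced in \cref{rem:facts about KB} (the one-generator presentations via $\Box$ alone or $\Diamond$ alone from \cite{KKV04,VV14,Abr88,BK07}) and reduce injectivity to the corresponding fact about the positive Vietoris / Johnstone presentation, but this still ultimately rests on the same point-set computation.
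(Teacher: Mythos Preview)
The paper does not give its own proof of this theorem: it is stated as well known, with citations to \cite{Abr88}, \cite{KKV04}, and \cite{VV14}. Your proposal is a correct sketch of the standard argument found in those references (in particular \cite[Fact~1]{VV14}, which the paper later invokes in the proof of \cref{t:commute}): one defines $\theta_X \colon \K(\Clop(X)) \to \Clop(\V(X))$ on generators via the universal property, checks naturality on generators, gets surjectivity from the subbasis description of $\V(X)$, and handles injectivity by a normal-form or dual-space computation; the second natural isomorphism and the $\Alg(\K)$/$\Coalg(\V)$ equivalence then follow formally.

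One small comment: your injectivity sketch via ``computing $\Uf(\K(B))$ and matching with closed subsets'' is a bit circular as phrased, since identifying $\Uf(\K(B))$ with $\V(\Uf(B))$ is essentially the statement you are trying to prove. A cleaner route, and the one implicit in the paper's later \cref{lem:le in KB}, is to show directly that the defining relations of $\K$ are \emph{complete} for the order: write two elements in disjunctive and conjunctive normal form and argue (via an explicit separating modal operator into $\{0,1\}$, as the paper does) that if their images in $\Clop(\V(X))$ coincide then they were already equal in $\K(\Clop(X))$. This avoids the apparent circularity and is the technical core that the cited references supply.
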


\section{The semi-functors \texorpdfstring{$\Ksbox$}{K{\textasciicircum}S\_Box} and \texorpdfstring{$\Ksdia$}{K{\textasciicircum}S\_Diamond}} \label{sec: K on BAS}

In this section we describe how to lift a subordination $S \colon A \to B$ to the subordinations $\Sbox, \Sdia \colon \K(A) \to \K(B)$ which are the algebraic counterparts of $\Rbox$ and $\Rdia$. 
In the next section we use $\Sbox$ and $\Sdia$ to define $\Sem$, which is the algebraic counterpart of $\Rem$.
To define $\Sbox$ and $\Sdia$, we first introduce the conjunctive and disjunctive normal forms of elements of $\K(B)$. 

\begin{definition}\label{d:disj conj normal form}
Let $x \in \K(B)$.
\begin{enumerate}[label=\normalfont(\arabic*), ref = \arabic*]
\item\label{d:disj conj normal form:item1} We say that $x$ is in \emph{disjunctive normal form} if it is written as a finite join
\[
x=\bigvee_{i=1}^n (\Box_{a_i} \wedge \Diamond_{b_{i1}} \wedge \dots \wedge \Diamond_{b_{i n_i}}),
\]
where $0 \neq  b_{ij} \le a_i$ for each $i,j$.
\item\label{d:disj conj normal form:item2} We say that $x$ is in \emph{conjunctive normal form} if it is written as a finite meet
\[
x=\bigwedge_{i=1}^n (\Diamond_{c_i} \vee \Box_{d_{i1}} \vee \dots \vee \Box_{d_{i n_i}}),
\]
where $c_i \le d_{ij} \neq 1$ for each $i,j$.
\end{enumerate}
Since we allow $n=0$, the empty join and meet yield disjunctive and conjunctive normal forms for $0$ and $1$, respectively.
\end{definition}

\begin{lemma}
Any element of $\K(B)$ can be written in disjunctive and conjunctive normal form. 
\end{lemma}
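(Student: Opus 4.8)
The plan is to show that the relations in \cref{def:KB} suffice to rewrite every element of $\K(B)$ into both normal forms, proceeding by structural induction on the construction of an arbitrary element from generators via $\neg$, $\wedge$, $\vee$. Since $\{\Box_a \mid a\in B\}\cup\{\Diamond_a \mid a\in B\}$ generates $\K(B)$ as a boolean algebra, every element is a finite boolean combination of generators; using the distributive law of boolean algebras, such a combination can first be brought into an ordinary disjunctive normal form as a finite join of finite meets of generators and negated generators. So the real work is to eliminate negations of generators and to normalize each conjunct, using only the defining relations. By \cref{rem:facts about KB}\eqref{i:interdefinition} we have $\neg\Box_a=\Diamond_{\neg a}$ and $\neg\Diamond_a=\Box_{\neg a}$ in $\K(B)$, so every negated generator is itself a generator; hence every element of $\K(B)$ is a finite join of finite meets of (unnegated) $\Box$'s and $\Diamond$'s.

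Next I would normalize a single conjunct $\Box_{a_1}\wedge\dots\wedge\Box_{a_k}\wedge\Diamond_{b_1}\wedge\dots\wedge\Diamond_{b_m}$. Using $\Box_{a\wedge b}=\Box_a\wedge\Box_b$ (and $\Box_1=1$ when $k=0$) the box part collapses to a single $\Box_a$ with $a=a_1\wedge\dots\wedge a_k$. For the diamond part, if some $b_j=0$ then $\Diamond_0=0$ makes the whole conjunct $0$, which we drop from the join (the empty join being $0$); and if $b_j\not\le a$, we replace $\Diamond_{b_j}$ by $\Box_a\wedge\Diamond_{b_j}\le\Diamond_{a\wedge b_j}$, noting that $\Box_a\wedge\Diamond_{b_j}=\Box_a\wedge\Diamond_{a\wedge b_j}$ since conversely $\Diamond_{a\wedge b_j}\le\Diamond_{b_j}$ by (S4)-type monotonicity of $\Diamond$ (which follows from $\Diamond_{a\vee b}=\Diamond_a\vee\Diamond_b$). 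Thus each conjunct can be rewritten so that it is either $0$ or of the form $\Box_a\wedge\Diamond_{b_1}\wedge\dots\wedge\Diamond_{b_m}$ with $0\neq b_j\le a$, which is precisely the shape required in \cref{d:disj conj normal form}\eqref{d:disj conj normal form:item1}. Taking the join over the surviving conjuncts (and using the empty join for $0$) yields the disjunctive normal form; the conjunctive normal form is obtained by the order-dual argument, dualizing $\Box\leftrightarrow\Diamond$, $\wedge\leftrightarrow\vee$, $0\leftrightarrow 1$, $\le\leftrightarrow\ge$, and using $\Diamond_0=0$, $\Diamond_{a\vee b}=\Diamond_a\vee\Diamond_b$, and $\Box_a\wedge\Diamond_b\le\Diamond_{a\wedge b}$ in dualized form.

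The only mild subtlety — and the step I would be most careful about — is making sure each rewriting is an \emph{identity} in $\K(B)$ rather than just an inequality, since \cref{d:disj conj normal form} asks for an equality $x=\bigvee_i(\dots)$. This is why it is important to observe that $\Box_a\wedge\Diamond_{b_j}=\Box_a\wedge\Diamond_{a\wedge b_j}$ (both inequalities hold) and that monotonicity of $\Box$ and $\Diamond$ is available: these follow from the preservation of finite meets/joins recorded in \cref{def:KB}, so that replacing $\Diamond_{b_j}$ by $\Diamond_{a\wedge b_j}$ inside a conjunct already containing $\Box_a$ does not change the element. With those observations in place, every step above is an equality, and the induction closes.
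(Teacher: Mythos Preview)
Your proposal is correct and follows essentially the same route as the paper's proof: write an arbitrary element as a join of meets of generators and their negations, eliminate negations via $\neg\Box_a=\Diamond_{\neg a}$ and $\neg\Diamond_a=\Box_{\neg a}$, collapse the boxes in each conjunct to a single $\Box_a$, replace each $\Diamond_{b_j}$ by $\Diamond_{a\wedge b_j}$ using the identity $\Box_a\wedge\Diamond_b=\Box_a\wedge\Diamond_{a\wedge b}$, and drop conjuncts containing $\Diamond_0$; the conjunctive form is obtained by the dual argument. Your extra care in verifying that $\Box_a\wedge\Diamond_b=\Box_a\wedge\Diamond_{a\wedge b}$ is a genuine equality (via monotonicity of $\Diamond$) is a point the paper leaves implicit.
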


\begin{proof}
Let $x \in \K(B)$. We only prove that $x$ can be written in disjunctive normal form because the proof for the conjunctive normal form is similar.
Since $\K(B)$ is generated by the elements of the form $\Box_a$ and $\Diamond_b$, we may write $x \in \K(B)$ as a finite join of finite meets of $\Box_a$, $\Diamond_b$ or their negations (see, e.g., \cite[p.~14]{Sik69}). As $\neg \Box_a=\Diamond_{\neg a}$ and $\neg \Diamond_b=\Box_{\neg b}$, we have
\[
x=\bigvee_{i=1}^n (\Box_{a_{i 1}} \wedge \cdots \wedge \Box_{a_{i k_i}} \wedge \Diamond_{b_{i1}} \wedge \cdots \wedge \Diamond_{b_{i n_i}}).
\]
Let $a_i=a_{i 1} \wedge \cdots \wedge a_{i k_i}$. It follows from the definition of $\K(B)$ that
\[
\Box_{a_i}= \Box_{a_{i 1}} \wedge \cdots \wedge \Box_{a_{i k_i}}.
\]
Therefore,
\[
x=\bigvee_{i=1}^n (\Box_{a_i} \wedge \Diamond_{b_{i1}} \wedge \cdots \wedge \Diamond_{b_{i n_i}}).
\]
The definition of $\K(B)$ also yields that $\Box_a \wedge \Diamond_b=\Box_a \wedge \Diamond_{a \wedge b}$ for each $a,b \in B$. Thus, by replacing each $b_{ij}$ with $a_i \wedge b_{ij}$, we may assume that $b_{ij} \le a_i$ for each $i,j$.
Since $\Diamond 0 = 0$, we can suppress every disjunct $\Box_{a_i} \wedge \Diamond_{b_{i1}} \wedge \cdots \wedge \Diamond_{b_{i n_i}}$ with $b_{ij}=0$ for some $j$. 
\end{proof}

The next definition is motivated by \cref{prop:description-on-morphisms}.

\begin{definition}\label{def:proximity on KB-12}
Let $S \colon A \to B$ be a subordination, $x\in\K(A)$, and $y \in \K(B)$.
\begin{enumerate}
\item We set $x \Sboxrel y$ if it is possible to write $x$ in disjunctive normal form and $y$ in conjunctive normal form
\begin{equation*}
x=\bigvee_{i=1}^n (\Box_{a_i} \wedge \Diamond_{b_{i1}} \wedge \cdots \wedge \Diamond_{b_{i n_i}}), \quad y=\bigwedge_{j=1}^m (\Diamond_{c_j} \vee \Box_{d_{j1}} \vee \cdots \vee \Box_{d_{j m_j}}),
\end{equation*}
so that for each $i \le n$ and $j \le m$ there exists $k \le m_j$ with $a_i \S d_{jk}$.
\item We set $x \Sdiarel y$ if it is possible to write $x$ in disjunctive normal form and $y$ in conjunctive normal form
\begin{equation*}
x=\bigvee_{i=1}^n (\Box_{a_i} \wedge \Diamond_{b_{i1}} \wedge \cdots \wedge \Diamond_{b_{i n_i}}), \quad y=\bigwedge_{j=1}^m (\Diamond_{c_j} \vee \Box_{d_{j1}} \vee \cdots \vee \Box_{d_{j m_j}}),
\end{equation*}
so that for each $i \le n$ and $j \le m$ there exists $l \le n_i$ such that $b_{il} \mathrel{S} c_j$.
\end{enumerate}
\end{definition}

The following technical lemma characterizes the order on $\K(B)$ in terms of normal forms.

\begin{lemma}\label{lem:le in KB}
Let $x,y \in \K(B)$ be written in disjunctive and conjunctive normal form
\begin{equation*}
x=\bigvee_{i=1}^n (\Box_{a_i} \wedge \Diamond_{b_{i1}} \wedge \cdots \wedge \Diamond_{b_{i n_i}}), \quad y=\bigwedge_{j=1}^m (\Diamond_{c_j} \vee \Box_{d_{j1}} \vee \cdots \vee \Box_{d_{j m_j}}).
\end{equation*}
Then $x \le y$ iff for each $i\le n$ and $j\le m$ there exists $k \le m_j$ such that $a_i \le d_{jk}$ or there exists $l \le n_j$ such that $b_{il} \le c_j$.
\end{lemma}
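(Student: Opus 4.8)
The plan is to reduce to comparing a single disjunct of $x$ with a single conjunct of $y$, settle the easy inequality by monotonicity, and handle the remaining inequality by contraposition, the only nontrivial ingredient being a statement about nonzero elements of $\K(B)$ that follows from \cref{thm:VV}. Writing $x=\bigvee_{i=1}^n p_i$ and $y=\bigwedge_{j=1}^m q_j$ with $p_i=\Box_{a_i}\wedge\Diamond_{b_{i1}}\wedge\cdots\wedge\Diamond_{b_{in_i}}$ and $q_j=\Diamond_{c_j}\vee\Box_{d_{j1}}\vee\cdots\vee\Box_{d_{jm_j}}$, we have $x\le y$ iff $p_i\le q_j$ for all $i\le n$, $j\le m$ (a join is below an element iff each joinand is; an element is below a meet iff it is below each meetand). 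So it suffices to show, for $p=\Box_a\wedge\Diamond_{b_1}\wedge\cdots\wedge\Diamond_{b_k}$ with $0\ne b_l\le a$ and $q=\Diamond_c\vee\Box_{d_1}\vee\cdots\vee\Box_{d_r}$ with $c\le d_s\ne 1$, that $p\le q$ iff $b_l\le c$ for some $l$ or $a\le d_s$ for some $s$. The ``if'' direction is immediate, since the relations of $\K(B)$ make $\Box_{(-)},\Diamond_{(-)}$ monotone (e.g.\ $a\le d$ gives $\Box_a=\Box_{a\wedge d}=\Box_a\wedge\Box_d\le\Box_d$, and dually for $\Diamond$): if $a\le d_s$ then $p\le\Box_a\le\Box_{d_s}\le q$, and if $b_l\le c$ then $p\le\Diamond_{b_l}\le\Diamond_c\le q$.

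For ``only if'' I argue by contraposition: assuming $b_l\not\le c$ for all $l$ and $a\not\le d_s$ for all $s$, I show $p\wedge\neg q\ne 0$. Using $\neg\Diamond_e=\Box_{\neg e}$, $\neg\Box_e=\Diamond_{\neg e}$ (\cref{rem:facts about KB}), $\Box_{e\wedge e'}=\Box_e\wedge\Box_{e'}$, and $\Box_e\wedge\Diamond_f=\Box_e\wedge\Diamond_{e\wedge f}$ (each a consequence of the defining relations; the last one was noted above in the normal-form argument), together with $b_l\le a$ and $c\le d_s$, one computes
\[
p\wedge\neg q=\Box_{a\wedge\neg c}\wedge\bigwedge_{l=1}^{k}\Diamond_{b_l\wedge\neg c}\wedge\bigwedge_{s=1}^{r}\Diamond_{a\wedge\neg d_s}.
\]
Here every $\Diamond$-argument is nonzero (as $b_l\not\le c$ gives $b_l\wedge\neg c\ne 0$, and $a\not\le d_s$ gives $a\wedge\neg d_s\ne 0$) and lies below $a\wedge\neg c$. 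Hence the statement reduces to the following claim: \emph{if $0\ne f_t\le e$ in $B$ for $t=1,\dots,N$ with $N\ge 0$, then $\Box_e\wedge\Diamond_{f_1}\wedge\cdots\wedge\Diamond_{f_N}\ne 0$ in $\K(B)$.}

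To prove the claim, by \cref{thm:VV} it is enough to find a modal algebra $(C,\Box_C)$ and a boolean homomorphism $h\colon B\to C$ with $\Box_C h(e)\wedge\Diamond_C h(f_1)\wedge\cdots\wedge\Diamond_C h(f_N)\ne 0$ in $C$: composing the boolean homomorphism $\K(C)\to C$ attached to $\Box_C$ with $\K(h)$ gives a boolean homomorphism $\K(B)\to C$ sending $\Box_b\mapsto\Box_C h(b)$ and $\Diamond_b\mapsto\Diamond_C h(b)$ (\cref{rem:modal operators}), so any element mapped to something nonzero is itself nonzero. For the model, choose ultrafilters $u_1,\dots,u_N$ of $B$ with $f_t\in u_t$ (possible as $f_t\ne 0$; note $e\in u_t$ since $f_t\le e$), fix an arbitrary ultrafilter $u_0$, put $W=\{u_0,u_1,\dots,u_N\}$ and $R=\{(u_0,u_t)\mid 1\le t\le N\}$, and let $C=\mathcal P(W)$ be the modal algebra of $(W,R)$ (so $\Box_C U=\{w\mid R[w]\subseteq U\}$ and $\Diamond_C U=\{w\mid R[w]\cap U\ne\varnothing\}$), with $h(b)=\{w\in W\mid b\in w\}$, which is a boolean homomorphism since each $w$ is an ultrafilter. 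Then $R[u_0]=\{u_1,\dots,u_N\}\subseteq h(e)$ gives $u_0\in\Box_C h(e)$, and $u_t\in R[u_0]\cap h(f_t)$ gives $u_0\in\Diamond_C h(f_t)$ for every $t$, so $u_0$ witnesses nonzeroness.

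The main obstacle is exactly this nontriviality claim: the reductions and the computation of $p\wedge\neg q$ are routine equational manipulations with the presentation of $\K(B)$, but an element of $\K(B)$ cannot be shown nonzero from the defining relations alone — one needs a model, which \cref{thm:VV} supplies. (Alternatively, via the Stone--Vietoris duality $\Uf\circ\K\simeq\V\circ\Uf$ one could instead exhibit the finite, hence closed, set $\{u_1,\dots,u_N\}$ in $\V(\Uf(B))$, which lies in $\Box_{\widehat e}\cap\bigcap_t\Diamond_{\widehat{f_t}}$, where $\widehat e$ and $\widehat{f_t}$ are the clopen subsets of $\Uf(B)$ corresponding to $e$ and $f_t$, using \cref{l:Johnstone axioms}.) No separate treatment of the degenerate cases $n=0$, $m=0$, $n_i=0$, or $m_j=0$ is needed: they are absorbed by allowing empty joins and meets and by phrasing the claim for $N\ge 0$, an empty family of $\Diamond$'s leaving $\Box_e$, which is nonzero by the above model with $R=\varnothing$.
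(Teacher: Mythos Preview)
Your proof is correct and follows the same overall architecture as the paper's: reduce to a single disjunct versus a single conjunct, dispose of the backward direction by monotonicity of $\Box_{(-)}$ and $\Diamond_{(-)}$, and for the forward direction produce a boolean homomorphism out of $\K(B)$ witnessing $p \not\le q$. The difference lies entirely in the model used for this last step. The paper stays choice-free: it defines a finite-join-preserving map $\Diamond \colon B \to \{0,1\}$ by $\Diamond e = 1$ iff $e \wedge (a \wedge \neg c) \neq 0$, obtains from the presentation of $\K(B)$ a boolean homomorphism $\alpha \colon \K(B) \to \{0,1\}$ with $\alpha(\Diamond_e)=\Diamond e$ and $\alpha(\Box_e)=\neg\Diamond\neg e$, and then checks directly that $\alpha(p) = 1$ while $\alpha(q) = 0$. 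You instead rewrite $p \wedge \neg q$ into the form $\Box_e \wedge \bigwedge_t \Diamond_{f_t}$ with $0 \neq f_t \le e$, and show this is nonzero by choosing ultrafilters $u_t \ni f_t$ and building a finite Kripke frame on them. Your route is perhaps more vivid semantically, but it invokes the Boolean Prime Ideal theorem, whereas the paper's two-element target is entirely elementary and shows the lemma holds in $\mathsf{ZF}$. (A tiny edge case: your ``fix an arbitrary ultrafilter $u_0$'' in the $N=0$ case tacitly assumes $B$ is nontrivial; for trivial $B$ the normal-form constraints force $n_i=m_j=0$ and the lemma is immediate.)
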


\begin{proof}
	The inequality
	\[
	\bigvee_{i=1}^n (\Box_{a_i} \wedge \Diamond_{b_{i1}} \wedge \cdots \wedge \Diamond_{b_{i n_i}}) \le \bigwedge_{j=1}^m (\Diamond_{c_j} \vee \Box_{d_{j1}} \vee \cdots \vee \Box_{d_{j m_j}})
	\]
	holds iff for each $i, j$
	\[
	\Box_{a_i} \wedge \Diamond_{b_{i1}} \wedge \cdots \wedge \Diamond_{b_{i n_i}} \le \Diamond_{c_j} \vee \Box_{d_{j1}} \vee \cdots \vee \Box_{d_{j m_j}}.
	\]
	Thus, it is sufficient to show that
	for $a$, $b_1, \ldots, b_n$, $c$, $d_1, \ldots, d_m \in B$ such that $b_i \le a$ and $c \le d_j$ for every $i,j$, the following two conditions are equivalent:
	\begin{enumerate}
		\item \label{i:inequality}
		$\Box_a \wedge \Diamond_{b_1} \wedge \dots \wedge \Diamond_{b_n} \le \Diamond_c \vee \Box_{d_1} \vee \dots \vee \Box_{d_m}$,
		\item \label{i:existence}
		there exists $j$ such that $a \le d_j$ or there exists $i$ such that $b_i \le c$.
	\end{enumerate}
	
	\eqref{i:existence} $\Rightarrow$ \eqref{i:inequality}. It follows from Definition~\ref{def:KB} that $a \le d_j$ implies $\Box_a \le \Box_{d_j}$ and $b_i \le c$ implies $\Diamond_{b_i} \le \Diamond_{c}$. Thus, \eqref{i:inequality} holds.
	
	\eqref{i:inequality} $\Rightarrow$ \eqref{i:existence}.
	Suppose \eqref{i:inequality} holds, so $\Box_a \wedge \Diamond_{b_1} \wedge \dots \wedge \Diamond_{b_n} \leq \Diamond_c \vee \Box_{d_1} \vee \dots \vee \Box_{d_m}$.
	Let $A = \{0, 1\}$ be the two-element Boolean algebra.
	Define two functions $\Diamond, \Box \colon B \to A$ by	
	\[
	\Diamond e =
	\begin{cases}
		0 & \text{if } e \wedge (a \wedge \neg c)=0, \\
		1 & \text{if } e \wedge (a \wedge \neg c) \neq 0
	\end{cases}
	\]
	and
	\[
	\Box e =
	\begin{cases}
		1 & \text{if } \lnot e \wedge (a \wedge \neg c)=0, \\
		0 & \text{if } \lnot e \wedge (a \wedge \neg c) \neq 0
	\end{cases}
	\]
	for each $e \in B$.
	It is straightforward to see that $\Diamond$ preserves finite joins and that $\Box e = \lnot \Diamond \lnot e$ for each $e \in B$.
	Therefore, there is a unique boolean homomorphism $\alpha \colon \K(B) \to A$ such that $\alpha(\Box_e)=\Box e$ and $\alpha(\Diamond_e)=\Diamond e$ for each $e \in B$. Therefore,  
	\begin{align*}
		\alpha(\Box_a \wedge \Diamond_{b_1} \wedge \dots \wedge \Diamond_{b_n}) &=\Box a \wedge \Diamond b_1 \wedge \dots \wedge \Diamond b_n,\\
		\alpha(\Diamond_c \vee \Box_{d_1} \vee \dots \vee \Box_{d_m}) &=\Diamond c \vee \Box d_1 \vee \dots \vee \Box d_m.
	\end{align*}
	Since $\Box_a \wedge \Diamond_{b_1} \wedge \dots \wedge \Diamond_{b_n} \leq \Diamond_c \vee \Box_{d_1} \vee \dots \vee \Box_{d_m}$ and $\alpha$ is order preserving,
	\[
	\Box a \wedge \Diamond {b_1} \wedge \dots \wedge \Diamond {b_n} \leq \Diamond c \vee \Box {d_1} \vee \dots \vee \Box {d_m}.
	\]
	We have $\Box a = 1$ because $\lnot a \land (a \land \lnot c) = 0$, and $\Diamond c = 0$ because $c \land (a \land \lnot c) = 0$.
	Thus, 
	\[
	\Diamond {b_1} \wedge \dots \wedge \Diamond {b_n} \leq \Box {d_1} \vee \dots \vee \Box {d_m}.
	\]
	Consequently, $\Diamond {b_1} \wedge \dots \wedge \Diamond {b_n} = 0$ or $\Box {d_1} \vee \dots \vee \Box {d_m} = 1$.
	In the former case, there is $i \leq n$ such that $\Diamond b_i = 0$, so $b_i \land (a \land \lnot c) = 0$, and hence $b_i \land \lnot c = 0$ (since	$b_i \leq a$). Therefore,
	$b_i \leq c$.
	In the latter case, there is $j \leq m$ such that $\Box d_j = 1$, so $\lnot d_j \land (a \land \lnot c) = 0$, and hence $\lnot d_j \land a = 0$ (since $c \leq d_j$). Thus, $a \leq d_j$.
\end{proof}

The next proposition says that the definitions of $\Sbox$ and $\Sdia$ are independent of the normal forms used to write $x$ and $y$. This fact will be useful in proving that $\Sbox$ and $\Sdia$ are subordinations in \cref{th:Sbox Sdia subordinations}.

\begin{proposition}\label{p:Sbox Sdia independent representatives}
Let $S \colon A \to B$ be a subordination. If $x \in \K(A)$ and $y \in \K(B)$ are written in disjunctive and conjunctive normal form
\begin{equation*}
x=\bigvee_{i=1}^n (\Box_{a_i} \wedge \Diamond_{b_{i1}} \wedge \cdots \wedge \Diamond_{b_{i n_i}}), \quad y=\bigwedge_{j=1}^m (\Diamond_{c_j} \vee \Box_{d_{j1}} \vee \cdots \vee \Box_{d_{j m_j}}),
\end{equation*}
then
\begin{enumerate}[label=\normalfont(\arabic*), ref = \arabic*]
\item\label{p:Sbox Sdia independent representatives:item1} $x \Sboxrel y$ iff for all $i,j$ there exists $k \le m_j$ such that $a_i \S d_{jk}$.
\item\label{p:Sbox Sdia independent representatives:item2} $x \Sdiarel y$ iff for all $i,j$ there exists $l \le n_i$ such that $b_{il} \S c_j$.
\end{enumerate}
\end{proposition}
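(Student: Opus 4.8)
The plan is to reduce the statement to the following special case: if $x$ is in disjunctive normal form and we \emph{merge two disjuncts into one by a join-compatible operation}, or more generally pass between any two disjunctive normal forms of the same element, then the displayed $S$-condition is unchanged; and symmetrically for the conjunctive normal forms of $y$. Since any two normal forms of $x$ (resp.\ $y$) are connected by such elementary moves, independence of representatives will follow. To make this precise I would first prove a \emph{monotonicity} sub-lemma: if $x \Sboxrel y$ witnessed by one choice of normal forms, and $x' \le x$, $y \le y'$ with $x'$, $y'$ in some normal forms, then $x' \Sboxrel y'$ with those forms; the proof combines \cref{lem:le in KB} with the subordination axioms \ref{S4} (for the $\le$ side) and \ref{S2}, \ref{S3} (to absorb extra disjuncts/conjuncts). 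Actually, the cleanest route avoids elementary moves altogether: I would show directly that \emph{$x \Sboxrel y$ (for one pair of normal forms) is equivalent to an intrinsic condition on the elements $x,y \in \K(A)\times\K(B)$ and $S$}, namely that $x \le \bigvee\{\Box_a \mid a \in A\} \cap (\text{something})$ — but pinning down such an intrinsic condition is itself what \cref{prop:description-on-morphisms} will do topologically, so to keep \cref{prop:description-on-morphisms} downstream I should instead argue combinatorially.

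So concretely, here is the combinatorial argument I would carry out for \ref{p:Sbox Sdia independent representatives:item1} (part \ref{p:Sbox Sdia independent representatives:item2} is dual, swapping $\Box \leftrightarrow \Diamond$, meets $\leftrightarrow$ joins, and using \ref{S6} if needed, though more likely it is perfectly symmetric and needs no new axiom). Fix normal forms
\[
x=\bigvee_{i=1}^n (\Box_{a_i} \wedge \Diamond_{b_{i1}} \wedge \cdots \wedge \Diamond_{b_{i n_i}}), \qquad y=\bigwedge_{j=1}^m (\Diamond_{c_j} \vee \Box_{d_{j1}} \vee \cdots \vee \Box_{d_{j m_j}}),
\]
and suppose $x \Sboxrel y$ via some \emph{other} normal forms $x = \bigvee_{i'} (\Box_{a'_{i'}} \wedge \cdots)$, $y = \bigwedge_{j'}(\Diamond_{c'_{j'}} \vee \cdots)$, so that for all $i',j'$ there is $k'$ with $a'_{i'} \S d'_{j'k'}$. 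I want: for all $i,j$ there is $k$ with $a_i \S d_{jk}$. The key observation is a relationship between the $\Box$-data of two disjunctive normal forms of the same element: using \ref{S5} is \emph{not} available here (we are in $\BAS$, not $\SubSfive$), so instead I use the following. For a fixed disjunct index $i$, consider the boolean homomorphism $\alpha \colon \K(B') \to \{0,1\}$ (as constructed in the proof of \cref{lem:le in KB}) tuned to ``detect'' the $i$-th disjunct: one chooses $\Diamond e = 1 \iff e \wedge a_i \ne 0$ with $\neg c$ replaced by $0$, i.e.\ $\Box e = 1 \iff a_i \le e$. Evaluating the identity $x = x$ under $\alpha$ forces some primed disjunct $i'$ to have $a'_{i'} \le a_i$ (roughly: $\Box_{a_i}$ must be ``covered''), and dually evaluating $y=y$ forces, for each unprimed conjunct $j$, some primed conjunct $j'$ with the $d'_{j'k'}$'s ``below'' the $d_{jk}$'s in the appropriate sense. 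Then $a_i \ge a'_{i'} \S d'_{j'k'} \le d_{jk}$ for the matching $k$, so \ref{S4} gives $a_i \S d_{jk}$.

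The step I expect to be the main obstacle is exactly extracting, from ``two disjunctive normal forms represent the same element of $\K(B)$,'' a usable comparison between their $\Box$-labels $\{a_i\}$ and $\{a'_{i'}\}$ — the difficulty being that $\K(B)$ is a \emph{quotient} of the free boolean algebra, so equality there is not syntactic, and a disjunct $\Box_{a_i}\wedge\Diamond_{b_{i1}}\wedge\cdots$ can be rewritten in genuinely different-looking ways (e.g.\ replacing $b_{ij}$ by $a_i\wedge b_{ij}$, or splitting via $\Box_{a}\le\Box_{a\vee b}\vee\Diamond_{\neg b}$-type moves). The way I would tame this is to push everything through the two-valued homomorphisms $\alpha$ of the \cref{lem:le in KB} proof: these separate points of $\K(B)$ (they suffice to decide all inequalities, as that proof shows), so an equality or inequality in $\K(B)$ is \emph{equivalent} to its image under every such $\alpha$. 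Running the normal-form data of $x$ and $y$ through the family of $\alpha$'s indexed by pairs $(a_i, c_j)$ — and noting that $\alpha$ sends a disjunctive normal form to a disjunction of conjunctions of $0$'s and $1$'s whose value is computable from the labels — turns the abstract equality of representatives into finitely many concrete combinatorial inclusions among the labels, and the subordination axioms \ref{S2}, \ref{S3}, \ref{S4} then close the argument. I would present this as: (i) state and prove a ``separation'' remark that $x \le y$ in $\K(B)$ iff $\alpha(x) \le \alpha(y)$ for all the $\alpha$'s of the form above; (ii) deduce the label comparison; (iii) apply \ref{S4}.
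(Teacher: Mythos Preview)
Your overall strategy is the paper's strategy: use \cref{lem:le in KB} (or the two-valued homomorphisms from its proof) to extract a comparison between the $\Box$-labels of two disjunctive normal forms of $x$, do the dual thing for $y$, and close with \eqref{S4}. Two points need repair, though.

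First, your inequality is reversed. The homomorphism you describe has $\alpha(\Box_e)=1 \iff a_i \le e$, so from $\alpha(x)=1$ evaluated on the primed form you get $a_i \le a'_{i'}$ for some $i'$, not $a'_{i'}\le a_i$. This is exactly what you need for \eqref{S4}: from $a_i \le a'_{i'} \S (\,\cdots\,) \le d_{jk}$ you conclude $a_i \S d_{jk}$. With $a_i \ge a'_{i'}$ as you wrote, \eqref{S4} gives nothing.

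Second, the $y$-side step ``some primed conjunct $j'$ with $d'_{j'k'}\le d_{jk}$'' does not work: a single primed conjunct need not have any $d'_{j'k'}$ dominated by a $d_{jk}$. What does work, and what the paper does, is to use \emph{all} primed conjuncts at once. Having fixed $i$ and found $i'$ with $a_i \le a'_{i'}$, you know $a'_{i'}\S d'_{j'k'_{j'}}$ for \emph{every} $j'$, hence $a'_{i'}\S \bigwedge_{j'} d'_{j' k'_{j'}}$ by \eqref{S3}. Now $\Box_{\bigwedge_{j'} d'_{j' k'_{j'}}}\le y \le \Diamond_{c_j}\vee\Box_{d_{j1}}\vee\cdots\vee\Box_{d_{jm_j}}$, and a single application of \cref{lem:le in KB} gives $\bigwedge_{j'} d'_{j' k'_{j'}}\le d_{jk}$ for some $k$. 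This replaces your vague ``dually evaluating $y=y$'' step, which as stated tries to match one conjunct to one conjunct and fails. Once these two fixes are made, your argument is the paper's proof; the paper just streamlines it by invoking \cref{lem:le in KB} directly rather than re-running the $\alpha$-construction.
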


\begin{proof}
We only prove \eqref{p:Sbox Sdia independent representatives:item1} since the proof of \eqref{p:Sbox Sdia independent representatives:item2} is similar.
The right-to-left implication follows immediately from the definition of $\Sbox$. To prove the left-to-right implication, assume that $x \Sboxrel y$. The definition of $\Sbox$ implies that it is possible to write $x$ and $y$ in conjunctive and disjunctive normal form 
\begin{equation*}
x=\bigvee_{r=1}^{t} (\Box_{e_r} \wedge \Diamond_{f_{r1}} \wedge \cdots \wedge \Diamond_{f_{r t_r}}), \quad y=\bigwedge_{s=1}^{u} (\Diamond_{g_s} \vee \Box_{h_{s1}} \vee \cdots \vee \Box_{h_{s u_s}}),
\end{equation*}
so that for any $r,s$ there exists $q \le u_s$ with $e_r \S h_{s q}$.
Fix $i \le n$ and $j \le m$. We show that there is $k \le m_j$ such that $a_i \S d_{jk}$.
Clearly
\begin{align}\label{eq:le two normal forms}
\begin{split}
\Box_{a_i} \wedge \Diamond_{b_{i1}} \wedge \cdots \wedge \Diamond_{b_{i n_i}} \le \bigvee_{r=1}^{t} (\Box_{e_r} \wedge \Diamond_{f_{r1}} \wedge \cdots \wedge \Diamond_{f_{r t_r}}),\\
\bigwedge_{s=1}^{u} (\Diamond_{g_s} \vee \Box_{h_{s1}} \vee \cdots \vee \Box_{h_{s u_s}}) \le \Diamond_{c_j} \vee \Box_{d_{j1}} \vee \cdots \vee \Box_{d_{j m_j}}.
\end{split}
\end{align}
Thus,
\begin{equation*}
\Box_{a_i} \wedge \Diamond_{b_{i1}} \wedge \cdots \wedge \Diamond_{b_{i n_i}} \le \bigvee_{r=1}^{t} (\Box_{e_r} \wedge \Diamond_{f_{r1}} \wedge \cdots \wedge \Diamond_{f_{r t_r}}) \le \Diamond_0 \vee \Box_{e_1} \vee \dots \vee \Box_{e_{t}}.
\end{equation*}
We claim that there is $r'\le t$ such that $a_i \le e_{r'}$.
If $e_r =1$ for some $r$, then we can take $r' = r$.
Otherwise, $e_r \neq 1$ for every $r$, and hence the expression $\Diamond_0 \vee \Box_{e_1} \vee \dots \vee \Box_{e_{t}}$ is in conjunctive normal form (with a unique conjunct). Therefore, since $a_i\ne 0$, \Cref{lem:le in KB} implies that there is $r'\le t$ such that $a_i \le e_{r'}$, proving our claim.

By our assumption, for each $s \le u$ there exists $q_s$ such that $e_{r'} \S h_{s q_s}$. Consequently, $e_{r'} \S \bigwedge_{s=1}^{u} h_{s q_s}$ and \Cref{eq:le two normal forms} yields
\begin{equation*}
\Box_{\bigwedge_{s=1}^{u} h_{s q_s}} = \Box_{h_{1 q_1}} \wedge \dots \wedge \Box_{h_{u q_{u}}} \le \bigwedge_{s=1}^{u} (\Diamond_{g_s} \vee \Box_{h_{s1}} \vee \cdots \vee \Box_{h_{s u_s}}) \le \Diamond_{c_j} \vee \Box_{d_{j1}} \vee \cdots \vee \Box_{d_{j m_j}}.
\end{equation*}
By \Cref{lem:le in KB}, there exists $k \le m_j$ such that $\bigwedge_{s=1}^{u} h_{s q_s} \le d_{jk}$. Thus, we have found $k \le m_j$ such that
\begin{equation*}
a_i \le e_{r'} \S \bigwedge_{s=1}^{u} h_{s q_s} \le d_{jk},
\end{equation*} 
and so $a_i \S d_{jk}$.
\end{proof}

\begin{theorem}\label{th:Sbox Sdia subordinations}
If $S \colon A \to B$ is a subordination, then $\Sbox$ and $\Sdia$ are subordinations.
\end{theorem}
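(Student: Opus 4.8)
The plan is to verify axioms \ref{S1}--\ref{S4} for $\Sbox$ and $\Sdia$ directly, using \Cref{p:Sbox Sdia independent representatives} to move freely between normal forms and the fact that $S$ itself satisfies \ref{S1}--\ref{S4}. I will treat $\Sbox$ explicitly; $\Sdia$ is handled symmetrically (indeed, one expects $\Sdia$ to be expressible via $\Sbox$ and the dagger/negation interchange $\Box_a = \neg\Diamond_{\neg a}$, which could shorten the argument, but the direct route is cleaner here).

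First, \ref{S1}: $0 \Sboxrel 0$ and $1 \Sboxrel 1$. Write $0 \in \K(A)$ as the empty join and $0 \in \K(B)$ as the empty meet; then the universally quantified condition in \Cref{p:Sbox Sdia independent representatives}\eqref{p:Sbox Sdia independent representatives:item1} is vacuous, so $0 \Sboxrel 0$. For $1 \Sboxrel 1$, write $1 \in \K(A)$ as $\Box_1$ (a one-disjunct DNF with $a_1 = 1$ and no $\Diamond$-factors) and $1 \in \K(B)$ as the empty meet; again the condition is vacuous. (Alternatively use $1 = \Box_1$ on both sides and $1 \S 1$ from \ref{S1} for $S$.)

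Next, \ref{S4}: suppose $x' \le x \Sboxrel y \le y'$ in the appropriate $\K$'s; we must show $x' \Sboxrel y'$. Fix DNF/CNF representations of $x,y$ witnessing $x\Sboxrel y$, and pick any DNF of $x'$ and CNF of $y'$. Take a disjunct $\Box_{a'_i}\wedge\bigwedge_l\Diamond_{b'_{il}}$ of $x'$ and a conjunct $\Diamond_{c'_j}\vee\bigvee_k\Box_{d'_{jk}}$ of $y'$. From $x' \le x$ and \Cref{lem:le in KB}, this disjunct lies below some disjunct $\Box_{a_r}\wedge\cdots$ of $x$, and (arguing as in the proof of \Cref{p:Sbox Sdia independent representatives}, by evaluating into the two-element algebra) one extracts $a'_i \le a_r$; from $y \le y'$ one similarly gets a conjunct $\Diamond_{c_s}\vee\bigvee\Box_{d_{sk}}$ of $y$ with every $d_{sk} \le d'_{jk'}$ for a matching index, hence in particular for the $k$ witnessing $a_r \S d_{sk}$ we get $d_{sk} \le d'_{jk'}$ for some $k'$. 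Then $a'_i \le a_r \S d_{sk} \le d'_{jk'}$, so $a'_i \S d'_{jk'}$ by \ref{S4} for $S$. Since $i,j$ were arbitrary, $x'\Sboxrel y'$.

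For \ref{S2} ($x_1, x_2 \Sboxrel y$ implies $(x_1\vee x_2)\Sboxrel y$): a DNF of $x_1\vee x_2$ is obtained by concatenating DNFs of $x_1$ and $x_2$; each disjunct comes from one of the two, so the witnessing condition against a fixed CNF of $y$ transfers directly from the hypotheses. For \ref{S3} ($x \Sboxrel y_1, y_2$ implies $x \Sboxrel (y_1\wedge y_2)$): a CNF of $y_1\wedge y_2$ is the concatenation of CNFs of $y_1$ and $y_2$; each conjunct comes from one of the two, so again the condition transfers — here \Cref{p:Sbox Sdia independent representatives} is what lets us use, for the given disjunct $i$ of $x$, the witness supplied by whichever of $x\Sboxrel y_1$, $x\Sboxrel y_2$ is relevant. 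The analogous four verifications for $\Sdia$ use part \eqref{p:Sbox Sdia independent representatives:item2} of \Cref{p:Sbox Sdia independent representatives} and the symmetric roles of the $b$'s and $c$'s.

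The main obstacle is not any single axiom but the bookkeeping in \ref{S4}: we must pass from an inequality between elements of $\K$ to inequalities between the underlying elements of $B$, and the DNF/CNF of $x'$ and $y'$ need not refine those of $x$ and $y$. The key tool is again the two-element-algebra evaluation argument already used in \Cref{lem:le in KB} and in the proof of \Cref{p:Sbox Sdia independent representatives}, so I would either invoke those proofs or isolate the needed consequence as a small lemma: if $\Box_{a'}\wedge\bigwedge\Diamond_{b'}$ (with all $b' \le a'$, $a'\ne 0$) lies below a DNF, then it lies below one of its disjuncts $\Box_{a_r}\wedge\cdots$ with $a' \le a_r$, and dually for CNFs. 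With that in hand all eight verifications are short.
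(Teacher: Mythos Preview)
Your approach is essentially the paper's: verify \eqref{S1}--\eqref{S4} directly for $\Sbox$, citing \Cref{p:Sbox Sdia independent representatives} for representative-independence and, for \eqref{S4}, re-running the argument from that proposition's proof. Two small corrections: in \eqref{S1}, $0$ is not the empty meet (that is $1$); write $0\in\K(B)$ in CNF as $\Diamond_0$ instead (the condition is still vacuous since $n=0$). In your \eqref{S4} sketch, the step ``from $y\le y'$ one gets a conjunct of $y$ with every $d_{sk}\le d'_{jk'}$'' is not what happens: you must use \emph{all} conjuncts of $y$, collect the witnesses $d_{sq_s}$ with $a_r \S d_{sq_s}$, take their meet, and then apply \Cref{lem:le in KB} to $\Box_{\bigwedge_s d_{sq_s}}\le y\le y'$ to extract $\bigwedge_s d_{sq_s}\le d'_{jk'}$ --- exactly as in the proof of \Cref{p:Sbox Sdia independent representatives}, which is also what the paper invokes.
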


\begin{proof}
We only prove that $\Sbox$ is a subordination because the proof for $\Sdia$ is similar.
For this we must show that $\Sbox$ satisfies the axioms \eqref{S1}--\eqref{S4} of \Cref{d:subordination}. 

\eqref{S1}. Write $0$ in disjunctive normal form as the empty join and in conjunctive normal form as $0=\Diamond_0$. Then $0 \Sboxrel 0$ holds trivially. That $1 \Sboxrel 1$ is proved similarly.

\eqref{S2}. Suppose $x \Sboxrel z$ and $y \Sboxrel z$. Write $x,y$ in disjunctive normal form and $z$ in conjunctive normal form 
\begin{align*}
x &=\bigvee_{i=1}^n (\Box_{a_i} \wedge \Diamond_{b_{i 1}} \wedge \cdots \wedge \Diamond_{b_{i n_i}}), \quad y =\bigvee_{j=1}^m (\Box_{c_j} \wedge \Diamond_{d_{j 1}} \wedge \cdots \wedge \Diamond_{d_{j m_j}}),\\
z &=\bigwedge_{r=1}^t (\Diamond_{e_r} \vee \Box_{f_{r 1}} \vee \cdots \vee \Box_{f_{r t_r}}).
\end{align*}
By \Cref{p:Sbox Sdia independent representatives}\eqref{p:Sbox Sdia independent representatives:item1}, for all $i,r$ there exists $k \le t_r$ such that $a_i \S f_{r k}$ and for all $j,r$ there exists $l \le t_r$ such that $c_j \S f_{r l}$. Since $x \vee y$ can be written in disjunctive normal form as 
\begin{equation*}
x \vee y =\bigvee_{i=1}^n (\Box_{a_i} \wedge \Diamond_{b_{i 1}} \wedge \cdots \wedge \Diamond_{b_{i n_i}}) \vee \bigvee_{j=1}^m (\Box_{c_j} \wedge \Diamond_{d_{j 1}} \wedge \cdots \wedge \Diamond_{d_{j m_j}}),
\end{equation*}
it follows from the definition of $\Sbox$ that $(x \vee y) \Sboxrel z$.

The proof of \eqref{S3} is similar to that of \eqref{S2}.

\eqref{S4}. Let $x \le y \Sboxrel z \le w$. By the definition of $\Sbox$, we can write $x,y$ in disjunctive normal form and $z,w$ in conjunctive normal form
\begin{align*}
& x=\bigvee_{i=1}^n (\Box_{a_i} \wedge \Diamond_{b_{i1}} \wedge \cdots \wedge \Diamond_{b_{i n_i}}), \quad y=\bigvee_{r=1}^{t} (\Box_{e_r} \wedge \Diamond_{f_{r1}} \wedge \cdots \wedge \Diamond_{f_{r t_r}}),\\
& z=\bigwedge_{s=1}^{u} (\Diamond_{g_s} \vee \Box_{h_{s1}} \vee \cdots \vee \Box_{h_{s u_s}}), \quad w=\bigwedge_{j=1}^m (\Diamond_{c_j} \vee \Box_{d_{j1}} \vee \cdots \vee \Box_{d_{j m_j}}),
\end{align*}
so that for all $r,s$ there exists $q \le u_s$ with $e_r \S h_{s q}$. By arguing as in the proof of \Cref{p:Sbox Sdia independent representatives} we obtain that for any $i,j$ there exists $k \le m_j$ such that $a_i \S d_{jk}$. Therefore, $x \Sboxrel w$ by \Cref{p:Sbox Sdia independent representatives}\eqref{p:Sbox Sdia independent representatives:item1}. 
\end{proof}

We are ready to define the semi-functors $\Ksbox,\Ksdia \colon \BAS \to \BAS$.

\begin{definition}\label{d:Ksbox Ksdia}
For a boolean algebra $B$, let $\Ksbox(B) = \Ksdia(B) = \K(B)$. For a morphism $S$ in $\BAS$, let
$\Ksbox(S)=\Sbox$ and $\Ksdia(S)=\Sdia$.
\end{definition}

\begin{theorem}\label{t:Ksbox Ksdia Ks (semi-)functors}
$\Ksbox,\Ksdia \colon \BAS \to \BAS$ are semi-functors.
\end{theorem}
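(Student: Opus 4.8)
The plan is to show that $\Ksbox$ and $\Ksdia$ preserve composition of morphisms; since they already act correctly on objects (both send $B$ to $\K(B)$) and the identity-preservation requirement is dropped for semi-functors, this is all that remains. By the symmetry already exploited throughout the section, I would prove the statement for $\Ksbox$ and remark that the argument for $\Ksdia$ is analogous (alternatively, one could try to invoke a dagger-type duality between $\Box$ and $\Diamond$ as in \cref{l:Vr and dagger}, but since the dagger on $\BAS$ has not been set up here, spelling out the $\Ksdia$ case directly is cleaner). So let $S_1 \colon A \to B$ and $S_2 \colon B \to C$ be subordinations; I must show $(S_2 \circ S_1)_\Box = (S_2)_\Box \circ (S_1)_\Box$ as relations from $\K(A)$ to $\K(C)$.

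First I would fix $x \in \K(A)$ and $z \in \K(C)$ and write them in disjunctive and conjunctive normal form respectively, say $x = \bigvee_{i=1}^n (\Box_{a_i} \wedge \Diamond_{b_{i1}} \wedge \cdots \wedge \Diamond_{b_{in_i}})$ and $z = \bigwedge_{j=1}^m (\Diamond_{c_j} \vee \Box_{d_{j1}} \vee \cdots \vee \Box_{d_{jm_j}})$. By \cref{p:Sbox Sdia independent representatives}\eqref{p:Sbox Sdia independent representatives:item1}, $x \mathrel{(S_2 \circ S_1)_\Box} z$ holds iff for every $i,j$ there is $k \le m_j$ with $a_i \mathrel{(S_2 \circ S_1)} d_{jk}$, i.e.\ iff for every $i,j$ there is $k \le m_j$ and some $e \in B$ with $a_i \mathrel{S_1} e$ and $e \mathrel{S_2} d_{jk}$. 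For the other side, $x \mathrel{((S_2)_\Box \circ (S_1)_\Box)} z$ means there is $y \in \K(B)$ with $x \mathrel{(S_1)_\Box} y$ and $y \mathrel{(S_2)_\Box} z$. The forward inclusion $(S_2 \circ S_1)_\Box \subseteq (S_2)_\Box \circ (S_1)_\Box$ is the substantive direction: given the data $(e_{ij}$ for each pair $i,j$, together with the index $k=k(i,j))$ witnessing $x \mathrel{(S_2\circ S_1)_\Box} z$, I must manufacture an intermediate $y$. The natural candidate is to build $y$ so that its "$\Box$-part" records the elements $e_{ij}$ appropriately; concretely I would try $y := \bigwedge_{i,j}(\Box_{e_{ij}})$ (a conjunctive normal form with empty $\Diamond$-disjuncts, which is legitimate provided we arrange $e_{ij}\ne 1$, and if some $e_{ij}=1$ that conjunct is simply dropped) — but one has to be careful that $y$ is a correct intermediate in *both* factors, so I would instead take $y$ to also be writable in disjunctive normal form and check: (a) $x \mathrel{(S_1)_\Box} y$ via \cref{p:Sbox Sdia independent representatives}, using $a_i \mathrel{S_1} e_{ij}$ and axiom \eqref{S3} to combine over $j$; and (b) $y \mathrel{(S_2)_\Box} z$, using $e_{ij} \mathrel{S_2} d_{j,k(i,j)}$ together with \cref{lem:le in KB} to see that the relevant conjunct of $z$ lies above the relevant disjunct-bound of $y$. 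The reverse inclusion $(S_2)_\Box \circ (S_1)_\Box \subseteq (S_2 \circ S_1)_\Box$ should be easier: starting from an intermediate $y$, write $y$ in (say) disjunctive normal form, extract from $x \mathrel{(S_1)_\Box} y$ and $y \mathrel{(S_2)_\Box} z$ the witnessing $S_1$- and $S_2$-relations, and compose them using \eqref{S4} and \eqref{S3} to produce the required $a_i \mathrel{(S_2\circ S_1)} d_{jk}$.

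The main obstacle, as usual with these normal-form arguments, is the forward inclusion: the definitions of $(S_1)_\Box$ and $(S_2)_\Box$ each allow one to *choose* convenient normal forms for the left and right arguments, and the intermediate $y$ must simultaneously admit a conjunctive normal form matching the "output side" needed by $(S_1)_\Box$ *and* a disjunctive normal form matching the "input side" needed by $(S_2)_\Box$. Getting a single element $y$ of $\K(B)$ that does both, while respecting the side conditions $0 \ne b \le a$ and $c \le d \ne 1$ in \cref{d:disj conj normal form}, is the delicate bookkeeping step; I expect to handle it by taking $y$ built from the elements $\bigwedge_j e_{ij}$ (one per $i$), exploiting that $\Box_{\bigwedge_j e_{ij}} = \bigwedge_j \Box_{e_{ij}}$ in $\K(B)$ so that $y$ has both a transparent conjunctive form and a transparent disjunctive form, and then invoking \cref{lem:le in KB} and \cref{p:Sbox Sdia independent representatives} to verify the two membership claims. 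Once composition is established for $\Ksbox$, the $\Ksdia$ case follows by the same argument with the roles of $\Box$ and $\Diamond$, of disjunctive and conjunctive normal forms, and of \eqref{S2}/\eqref{S3} interchanged, and the theorem is proved.
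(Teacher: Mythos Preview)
Your plan is essentially the paper's proof. The intermediate element you converge on for the forward inclusion, namely $y=\bigvee_i\Box_{\bigwedge_j e_{ij}}=\bigvee_i\bigwedge_j\Box_{e_{ij}}$, is exactly the element the paper uses, and your verification via \eqref{S3} for $(S_1)_\Box$ and the generator-level relations for $(S_2)_\Box$ mirrors the paper's computation.

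One small correction on the reverse inclusion: you propose writing the intermediate $y$ in \emph{disjunctive} normal form, but that does not directly feed into $x\mathrel{(S_1)_\Box}y$, which requires a conjunctive form on the right. The paper instead writes the intermediate in \emph{conjunctive} normal form $y=\bigwedge_r(\Diamond_{e_r}\vee\Box_{f_{r1}}\vee\cdots)$, extracts for each fixed $i$ and all $r$ an index $l_r$ with $a_i\mathrel{S_1}f_{rl_r}$, observes that $\Box_{\bigwedge_r f_{rl_r}}\le y$, and then applies \eqref{S4} for $(S_2)_\Box$ to this single $\Box$-term (which is already in disjunctive form) to obtain the needed $\bigwedge_r f_{rl_r}\mathrel{S_2}d_{jk}$. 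This is the small trick that makes the ``easier'' direction actually go through; once you swap the choice of normal form your argument lines up with the paper.
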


\begin{proof}
We show that $\Ksbox$ is a semi-functor. That $\Ksdia$ is a semi-functor is proved similarly.
Let $S \colon B_1 \to B_2$ and $T \colon B_2 \to B_3$ be subordinations, $x \in \Ks(B_1)$, and $y \in \Ks(B_3)$. Suppose that $x$ and $y$ are written in disjunctive and conjunctive normal form
\begin{align*}
x=\bigvee_{i=1}^n (\Box_{a_i} \wedge \Diamond_{b_{i1}} \wedge \cdots \wedge \Diamond_{b_{i n_i}}), \quad y=\bigwedge_{j=1}^m (\Diamond_{c_j} \vee \Box_{d_{j1}} \vee \cdots \vee \Box_{d_{j m_j}}).
\end{align*}
We show that $x \mathrel{\Ksbox(T \circ S)} y$ iff $x \left(\mathrel{\Ksbox(T)} \circ \mathrel{\Ksbox(S)}\right) y$. To prove the left-to-right implication, suppose that $x \mathrel{\Ksbox(T \circ S)} y$, which means that for every $i \le n$ and $j \le m$ there exists $k_j \le m_j$ such that $a_i \mathrel{(T \circ S)} d_{jk_j}$. Thus, there is $f_{ij} \in B_2$ such that $a_i \S f_{ij} \T d_{jk_j}$, and so $\Box_{a_i} \mathrel{\Ksbox(S)} \Box_{f_{ij}} \mathrel{\Ksbox(T)} \Box_{d_{j k_j}}$. It follows from the properties of subordinations that for each $i$ we have
\[
\Box_{a_i} \mathrel{\Ksbox(S)} \left(\bigwedge_{j=1}^m \Box_{f_{ij}}\right) \mathrel{\Ksbox(T)} \left(\bigwedge_{j=1}^m \Box_{d_{jk_j}}\right),
\]
and hence
\[
x \le \left(\bigvee_{i=1}^n \Box_{a_i}\right) \mathrel{\Ksbox(S)} \left(\bigvee_{i=1}^n \bigwedge_{j=1}^m \Box_{f_{ij}}\right) \mathrel{\Ksbox(T)} \left(\bigwedge_{j=1}^m \Box_{d_{jk_j}}\right) \le y.
\]
Let $z = \bigvee_{i=1}^n \bigwedge_{j=1}^m \Box_{f_{ij}} \in \Ks(B_2)$. Then $x \mathrel{\Ksbox(S)} z \mathrel{\Ksbox(T)} y$. This shows that $\Ksbox(T \circ S) \subseteq \Ksbox(T) \circ \Ksbox(S)$. To prove the other inclusion, suppose that $x \mathrel{\left(\Ksbox(T) \circ \Ksbox(S)\right)} y$, and that $x,y$ are written in disjunctive and conjunctive normal form as above. Then there is $z \in \Ks(B_2)$ such that $x \mathrel{\Ksbox(S)} z \mathrel{\Ksbox(T)} y$. Write $z$ in conjunctive normal form 
\[
z = \bigwedge_{r=1}^t (\Diamond_{e_r} \vee \Box_{f_{r1}} \vee \cdots \vee \Box_{f_{r t_r}}).
\]
Fix $i \le n$ and $j \le m$. Since $x \mathrel{\Ksbox(S)} z$, for each $r \le t$ there is $l_r \le t_r$ such that $a_i \S f_{r l_r}$. Because
\[
\Box_{\bigwedge_{r=1}^t f_{r l_r}} \le z \mathrel{\Ksbox(T)} y, 
\]
there exists $s \le m_j$ such that $\left(\bigwedge_{r=1}^t f_{r l_r}\right) \T d_{js}$. Then $a_i \S \left(\bigwedge_{k=1}^t f_{k l_k}\right) \T d_{js}$, and so $a_i \mathrel{(T \circ S)} d_{js}$. Therefore, $x \mathrel{\Ksbox(T \circ S)} y$. This shows that $\Ksbox$ is a semi-functor.
\end{proof}

\section{The endofunctor \texorpdfstring{$\Ks$}{K{\textasciicircum}S} on \texorpdfstring{$\BAS$}{BA{\textasciicircum}S}}

In this section we utilize $\Sbox$ and $\Sdia$ defined in the previous section to lift a subordination $S \colon A \to B$ to the subordination $\Sem \colon \K(A) \to \K(B)$, which is the algebraic counterpart of $\Rem$.
This allows us to extend the endofunctor $\K \colon \ba \to \ba$ to the endofunctor $\Ks \colon \BAS \to \BAS$.

For boolean algebras $A$ and $B$, we denote by $\BAS(A,B)$ the poset of subordinations $S \colon A \to B$ ordered by inclusion.\footnote{This is in contrast to \cite{ABC22a}, where $\BAS(A,B)$ was ordered by reverse inclusion to guarantee that $\BAS$ was an allegory.}
Our goal is to define $\Sem$ as the join of $\Sbox$ and $\Sdia$ in $\BAS(\K(A),\K(B))$.
For this we need to show that joins exist in $\BAS(\K(A),\K(B))$. In fact, we will show that $\BAS(A,B)$ is always a frame, where we recall (see, e.g., \cite[p.~10]{PP12}) that a complete lattice $L$ is a \emph{frame} if it satisfies $a \wedge \bigvee E = \bigvee \{ a \wedge b \mid b \in E \}$ for each $a \in L$ and $E \subseteq L$.

We also recall (see, e.g., \cite[p.~10]{PP12}) that a complete lattice is a \emph{coframe} if its order-dual is a frame.
Let $X$ be the Stone dual of $A$ and $Y$ the Stone dual of $B$. It is clear that $(\StoneR(X,Y), \subseteq)$ is the poset of closed subsets of $X \times Y$, and so is a coframe.
By \cite[Thm.~2.14]{ABC22a}, $(\BAS(A,B), \subseteq)$ is dually isomorphic to $(\StoneR(\Uf(A),\Uf(B)), \subseteq)$. Therefore, $\BAS(A,B)$ is a frame.
However, this proof uses Stone duality. As promised in \cite[Rem.~2.15]{ABC22a}, we give a choice free proof of this result. 

\begin{theorem}\label{lem:join subordinations}
$\BAS(A,B)$ is a frame, where meets are given by intersections and the join of $\{ S_\alpha \} \subseteq \BAS(A,B)$ is given by $x \left(\mathrel{\bigvee S_\alpha}\right) y$ iff there exist finite subsets $F \subseteq A$ and $G \subseteq B$ such that $x=\bigvee F$, $y= \bigwedge G$ and for all $a \in F$ and $b \in G$ there is $\alpha$ with $a \mathrel{S_\alpha} b$.
\end{theorem}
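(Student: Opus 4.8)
The plan is to verify directly that the described relation $\bigvee S_\alpha$ is a subordination, that it is the least upper bound of $\{S_\alpha\}$ in $\BAS(A,B)$, and that the frame distributive law holds, without invoking Stone duality at any point. Throughout, the key technical engine is the following normalization observation: an element $x\left(\bigvee S_\alpha\right)y$ holds precisely when $x,y$ admit representations $x=\bigvee F$, $y=\bigwedge G$ with $F\subseteq A$, $G\subseteq B$ finite and $a\mathrel{S_\alpha}b$ for suitable $\alpha$ (depending on $a,b$) for every $a\in F$, $b\in G$. I will first record that such representations can always be enlarged: if $x=\bigvee F$ one may add to $F$ any element below $x$ without affecting the witnessing condition (using \eqref{S4}), and dually for $G$; this ``downward/upward saturation'' of $F$ and $G$ is what makes the axioms go through smoothly.

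First I would check that $R\coloneqq\bigvee S_\alpha$ is well defined as a relation and satisfies \eqref{S1}--\eqref{S4}. For \eqref{S1}, take $F=\{0\}$, $G=\{0\}$ and $F=\{1\}$, $G=\{1\}$ and use $0\mathrel{S_\alpha}0$, $1\mathrel{S_\alpha}1$. For \eqref{S4}, if $x'\le x\mathrel{R}y\le y'$ with witnesses $F,G$, replace $F$ by $F\cup\{x'\}$ and $G$ by $G\cup\{y'\}$: then $\bigvee(F\cup\{x'\})=x$ still (since $x'\le x$), $\bigwedge(G\cup\{y'\})=y$, and for the new elements $x'\le a\mathrel{S_\alpha}b\le y'$ gives $x'\mathrel{S_\alpha}y'$ by \eqref{S4} applied to $S_\alpha$; but one actually wants witnesses for $x'\mathrel{R}y'$, so instead take $F'=\{x'\}$, $G'=\{y'\}$ directly: $x'=\bigvee F'$, $y'=\bigwedge G'$, and $x'\le a_0\mathrel{S_{\alpha_0}}b_0\le y'$ for any $a_0\in F$, $b_0\in G$ yields $x'\mathrel{S_{\alpha_0}}y'$ — here one needs $F,G$ nonempty, which holds since we may always assume $0\in F$ and $1\in G$ by the saturation remark (adjoining $0$ to $F$ and $1$ to $G$ changes neither $\bigvee F$ nor $\bigwedge G$, and $0\mathrel{S_\alpha}b$, $a\mathrel{S_\alpha}1$ always hold by \eqref{S4} from $0\mathrel{S_\alpha}0$, $1\mathrel{S_\alpha}1$). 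For \eqref{S2}, given $x\mathrel{R}z$ with witnesses $F_1,G_1$ and $y\mathrel{R}z$ with witnesses $F_2,G_2$, take $F=F_1\cup F_2$ and $G=G_1\cup G_2$: then $\bigvee F=x\vee y$, and for $a\in F$, $b\in G$ one finds the required $\alpha$ by splitting into the four cases according to which $F_i,G_j$ contain $a,b$ (using again that $G_1,G_2$ may be taken saturated so that $b\in G_1\cap G_2$ after adjoining $1$, or more carefully, enlarging $G_1$ and $G_2$ to $G_1\cup G_2$ and noting meets only shrink, so one must re-verify $\bigwedge(G_1\cup G_2)=z$ using $z=\bigwedge G_1=\bigwedge G_2$). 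The axiom \eqref{S3} is dual. I would also note $S_\alpha\subseteq R$ for each fixed $\alpha$ by taking $F=\{a\}$, $G=\{b\}$.

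Next I would prove $R$ is the join: it is an upper bound by the last remark, and if $T\in\BAS(A,B)$ satisfies $S_\alpha\subseteq T$ for all $\alpha$, then $x\mathrel{R}y$ with witnesses $F,G$ gives, for each $a\in F$, $b\in G$, some $\alpha$ with $a\mathrel{S_\alpha}b$, hence $a\mathrel{T}b$; applying \eqref{S2} across $a\in F$ (for fixed $b$) yields $x=\bigvee F\mathrel{T}b$, and then \eqref{S3} across $b\in G$ yields $x\mathrel{T}\bigwedge G=y$, so $R\subseteq T$. Meets being intersections is immediate since an intersection of subordinations is a subordination (each axiom is preserved under intersection, directly from the definition). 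Finally, for the frame law $S\wedge\bigvee S_\alpha=\bigvee(S\wedge S_\alpha)$: the inclusion $\supseteq$ is general lattice theory; for $\subseteq$, suppose $x\mathrel{S}y$ and $x\mathrel{\bigvee S_\alpha}y$ with witnesses $F,G$. For each $a\in F$ and $b\in G$ pick $\alpha_{ab}$ with $a\mathrel{S_{\alpha_{ab}}}b$; since $x\mathrel{S}y$ and $a\le x$, $y\le b$ we get $a\mathrel{S}b$ by \eqref{S4}, so $a\mathrel{(S\wedge S_{\alpha_{ab}})}b$, i.e. $a\mathrel{(S\wedge S_{\alpha_{ab}})}b$ witnesses membership in $\bigvee(S\wedge S_\alpha)$; thus the same pair $(F,G)$ witnesses $x\mathrel{\bigvee(S\wedge S_\alpha)}y$.

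**Main obstacle.** The delicate point throughout is bookkeeping with the finite sets $F,G$ when combining witnesses: enlarging $F$ keeps $\bigvee F$ fixed only for elements below $x$, while enlarging $G$ keeps $\bigwedge G$ fixed only for elements above $y$, so in the $\eqref{S2}$/$\eqref{S3}$ arguments one must check that forming $F_1\cup F_2$ and $G_1\cup G_2$ really does produce representations of $x\vee y$ and $z$ (this uses $z=\bigwedge G_1=\bigwedge G_2$ so $\bigwedge(G_1\cup G_2)=z$, and dually). I expect this to be the part requiring the most care, though each individual verification is short once the saturation principle ($0\in F$, $1\in G$ may always be assumed, and $F$ may be closed downward under $\le x$, $G$ upward under $\ge y$) is isolated at the outset.
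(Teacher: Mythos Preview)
Your overall strategy is the same as the paper's, and the verifications of the least-upper-bound property and the frame distributive law are correct and essentially identical to the paper's. However, your arguments for \eqref{S4} and \eqref{S2} contain a genuine gap.

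For \eqref{S4}: you write ``$x'\le a_0\mathrel{S_{\alpha_0}}b_0\le y'$ for any $a_0\in F$, $b_0\in G$''. But the inequalities go the wrong way: from $x=\bigvee F$ you only get $a_0\le x$ (not $x'\le a_0$), and from $y=\bigwedge G$ you only get $y\le b_0$ (not $b_0\le y'$). Adjoining $0$ to $F$ or $1$ to $G$ does not help, and your saturation principle (closing $F$ downward under $x$, $G$ upward over $y$) is in the direction that cannot produce $x'\le a_0$ or $b_0\le y'$. The paper's fix is to replace $F$ by $\{a\wedge x'\mid a\in F\}$ and $G$ by $\{b\vee y'\mid b\in G\}$: then $\bigvee\{a\wedge x'\}=x\wedge x'=x'$, $\bigwedge\{b\vee y'\}=y\vee y'=y'$, and $a\wedge x'\le a\mathrel{S_\alpha}b\le b\vee y'$ gives the required witness.

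For \eqref{S2}: taking $G=G_1\cup G_2$ does give $\bigwedge G=z$, but for $a\in F_1$ and $b\in G_2$ you have no reason to find any $\alpha$ with $a\mathrel{S_\alpha}b$: you only know $a\mathrel{S_\alpha}b'$ for $b'\in G_1$, and there is no inequality between $b'$ and $b$ in general (only $z\le b$ and $z\le b'$). The same obstruction blocks your ``enlarge $G_1$ to $G_1\cup G_2$'' idea. The paper instead takes $G''=\{b\vee b'\mid b\in G_1,\,b'\in G_2\}$: distributivity gives $\bigwedge G''=z$, and now for $a\in F_1$ one has $a\mathrel{S_\alpha}b\le b\vee b'$, while for $a\in F_2$ one has $a\mathrel{S_\beta}b'\le b\vee b'$, so every cross-term is covered. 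This Boolean ``join of the two $G$'s'' (and dually ``meet of the two $F$'s'' for \eqref{S3}) is precisely the missing ingredient you flagged as the main obstacle but did not resolve.
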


\begin{proof}
It is straightforward to see that the intersection of a family of subordinations is a subordination. Therefore, $\BAS(A,B)$ is a complete lattice, where meets are given by intersections.
Let $\{ S_\alpha \} \subseteq \BAS(A,B)$. Define $S \colon A \to B$ by $x \S y$ iff there exist finite subsets $F \subseteq A$ and $G \subseteq B$ such that $x=\bigvee F$, $y= \bigwedge G$ and for all $a \in F$ and $b \in G$ there is $\alpha$ with $a \mathrel{S_\alpha} b$.
It is clear that $S$ contains $S_\alpha$ for each $\alpha$. We show that $S$ is a subordination.

\eqref{S1}. Since $0=\bigvee \varnothing$ and $1=\bigwedge \varnothing$, it trivially holds that $0 \S 0$ and $1 \S 1$.

\eqref{S2}. Suppose that $x \S y$ and $x' \S y$. Then there exist $F,F' \subseteq A$ and $G,G' \subseteq B$ such that $x=\bigvee F$, $x'=\bigvee F'$, $y=\bigwedge G = \bigwedge G'$ and for all $a \in F$ and $b \in G$ there exists $\alpha$ with $a \mathrel{S_\alpha} b$ and for all $a' \in F'$ and $b' \in G'$ there exists $\beta$ with $a' \mathrel{S_\beta} b'$. Since $B$ is a boolean algebra, by distributivity we obtain that $y= \bigwedge G''$, where $G''= \{ b \vee b' \mid b \in G \text{ and } b' \in G' \}$. Let $F''=F \cup F'$. Then $x \vee x' =\bigvee F''$. Let $a'' \in F''$ and $b'' \in G''$. Then $b''=b \vee b'$ with $b \in G$ and $b' \in G'$. Therefore, if $a'' \in F$, there is $\alpha$ such that $a'' \S_\alpha b \le b \vee b'$. If $a'' \in F'$, there is $\beta$ such that $a'' \S_\beta b' \le b \vee b'$. In either case, $a'' \S b''$. Thus, $x \S y$.

\eqref{S3} is proved similarly to \eqref{S2}.

\eqref{S4}. Suppose $x' \le x \mathrel{S} y \le y'$. Then $x=\bigvee F$, $y= \bigwedge G$, and for all $a \in F$ and $b \in G$ there is $\alpha$ with $a \mathrel{S_\alpha} b$. Let $F' = \{ a \wedge x' \mid a \in F\}$ and $G' = \{ b \vee y' \mid b \in G\}$. Then $x'=\bigvee F'$ and $y' = \bigwedge G'$. Moreover, if $a' \in F'$ and $b' \in G'$, there exist $a \in F$ and $b \in G$ such that $a'=a \wedge x'$ and $b'= b \vee y'$. Therefore, $a'=a \wedge x \le a \S_\alpha b \le b \vee y'=b'$. Thus, $x' \S y'$.

It remains to prove that $S$ is the least upper bound of $\{ S_\alpha \}$. Let $T \colon A \to B$ be such that $S_\alpha \subseteq T$ for each $\alpha$ and let $x \S y$. Then $x=\bigvee F$, $y=\bigwedge G$ for some finite $F$ and $G$ such that for all $a \in F$ and $b \in G$ there is $\alpha$ with $a \S_\alpha b$. Therefore, $a \T b$. Since $T$ satisfies \eqref{S2} and \eqref{S3}, it follows that $x = \left(\bigvee F \right) \T \left( \bigwedge G \right) = y$. Thus, $S \subseteq T$.

It is left to show that $T \cap \bigvee S_\alpha = \bigvee (T \cap S_\alpha)$ for each $T \in \BAS(A,B)$ and $\{ S_\alpha \} \subseteq \BAS(A,B)$. The right-to-left inclusion is clear since $\BAS$ is a complete lattice. To show the left-to-right inclusion, let $x \in A$ and $y \in B$ such that $x \mathrel{\left(T \cap \bigvee S_\alpha \right)} y$. Then $x \T y$ and $x \left(\mathrel{\bigvee S_\alpha}\right) y$. So there exist finite subsets $F \subseteq A$ and $G \subseteq B$ such that $x=\bigvee F$, $y= \bigwedge G$ and for all $a \in F$ and $b \in G$ there is $\alpha$ with $a \mathrel{S_\alpha} b$. For all $a \in F$ and $b \in G$ we have $a \le x \T y \le b$, which implies that $a \T b$. Therefore, for all $a \in F$ and $b \in G$ there is $\alpha$ with $a \mathrel{(T \cap S_\alpha)} b$, and hence $x \mathrel{\bigvee (T \cap S_\alpha)} y$. Thus, $\BAS(A,B)$ is a frame.
\end{proof}

\begin{definition}\label{def:Sem}
Let $S \colon A \to B$ be a subordination between boolean algebras. We define $\Sem$ to be the join $\Sbox \vee \Sdia$ in $\BAS(\K(A),\K(B))$. 
\end{definition}

We thus have the following description of $\Sem$, which is an immediate consequence of \Cref{def:proximity on KB-12,th:Sbox Sdia subordinations,lem:join subordinations}.

\begin{corollary}\label{c:Sem description}
$\Sem$ is a subordination and $x \Semrel y$ iff it is possible to write $x$ and $y$ in disjunctive and conjunctive normal form
\begin{equation*}
x=\bigvee_{i=1}^n (\Box_{a_i} \wedge \Diamond_{b_{i1}} \wedge \cdots \wedge \Diamond_{b_{i n_i}}), \quad y=\bigwedge_{j=1}^m (\Diamond_{c_j} \vee \Box_{d_{j1}} \vee \cdots \vee \Box_{d_{j m_j}})
\end{equation*}
so that for all $i,j$ there exists $k \le m_j$ with $a_i \S d_{jk}$ or there exists $l \le n_i$ with $b_{i l} \S c_j$. 
\end{corollary}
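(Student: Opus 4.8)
The plan is to derive the description of $\Semrel$ by unwinding the definition $\Sem = \Sbox \vee \Sdia$ in the frame $\BAS(\K(A),\K(B))$ and then simplifying the resulting condition using the explicit formula for joins from \Cref{lem:join subordinations}. That $\Sem$ is a subordination is immediate: by \Cref{th:Sbox Sdia subordinations} both $\Sbox$ and $\Sdia$ are subordinations, and by \Cref{lem:join subordinations} their join in $\BAS(\K(A),\K(B))$ (which exists since that poset is a frame) is again a subordination. So the content is entirely in the characterization of when $x \Semrel y$.

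First I would apply \Cref{lem:join subordinations} with the two-element family $\{\Sbox,\Sdia\}$: we have $x \Semrel y$ iff there are finite $F \subseteq \K(A)$ and $G \subseteq \K(B)$ with $x = \bigvee F$, $y = \bigwedge G$, and for every $p \in F$ and $q \in G$ either $p \Sboxrel q$ or $p \Sdiarel q$. Now I would massage both sides into normal form. Each $p \in F$ can be written in disjunctive normal form and each $q \in G$ in conjunctive normal form; collecting the disjuncts over all $p \in F$ gives a disjunctive normal form for $x = \bigvee F$, and collecting (via distributivity in $\K(B)$, as in the proof of \eqref{S2} in \Cref{th:Sbox Sdia subordinations}) the conjuncts over all $q \in G$ gives a conjunctive normal form for $y = \bigwedge G$. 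The point is that \Cref{p:Sbox Sdia independent representatives} makes the relations $\Sboxrel$ and $\Sdiarel$ insensitive to the chosen normal forms, so the condition "for all $i,j$ there exists $k \le m_j$ with $a_i \S d_{jk}$, or there exists $l \le n_i$ with $b_{il} \S c_j$" can be checked disjunct-by-conjunct.

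For the forward direction, given $F,G$ as above, I would take the disjuncts of the $p$'s and the conjuncts of the $q$'s as the desired normal forms of $x$ and $y$; for a fixed disjunct of some $p$ and a fixed conjunct of some $q$, the hypothesis $p \Sboxrel q$ or $p \Sdiarel q$ together with \Cref{p:Sbox Sdia independent representatives} yields exactly the required $k$ or $l$. Conversely, given normal forms of $x$ and $y$ with the stated property, I would let $F$ be the set of individual disjuncts of $x$ and $G$ the set of individual conjuncts of $y$; then $x = \bigvee F$ and $y = \bigwedge G$, and for each disjunct $p \in F$ and conjunct $q \in G$, the single-disjunct/single-conjunct version of the property says precisely that $p \Sboxrel q$ (if the "$a_i \S d_{jk}$" alternative holds) or $p \Sdiarel q$ (if the "$b_{il} \S c_j$" alternative holds), again via \Cref{p:Sbox Sdia independent representatives}. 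This gives the witnesses required by \Cref{lem:join subordinations}, so $x \Semrel y$.

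The main obstacle is the bookkeeping needed to pass between the "finite sets $F,G$ of arbitrary elements" description coming from \Cref{lem:join subordinations} and the "single normal form for $x$, single normal form for $y$" description in the statement — in particular, making sure that when we split $x$ into its disjuncts and $y$ into its conjuncts, the pairwise condition is preserved in both directions, and that the "or" in the conclusion correctly packages the dichotomy $\Sboxrel$-vs-$\Sdiarel$. Since \Cref{p:Sbox Sdia independent representatives} already discharges all normal-form-independence issues, this is routine; the corollary really is "an immediate consequence" of the three cited results, and I would present it as such rather than belaboring the combinatorics.
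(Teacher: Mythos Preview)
Your proposal is correct and matches the paper's approach: the paper also states this as an immediate consequence of \Cref{def:proximity on KB-12}, \Cref{th:Sbox Sdia subordinations}, and \Cref{lem:join subordinations}, and your unpacking of that phrase (including the appeal to \Cref{p:Sbox Sdia independent representatives} to transfer the $\Sboxrel$/$\Sdiarel$ conditions across normal-form choices) is exactly the intended argument. One minor quibble: when collecting conjuncts for $y=\bigwedge G$ you don't need distributivity, since a meet of elements in conjunctive normal form is already in conjunctive normal form by concatenation.
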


The description of $\Sem$ given in \Cref{c:Sem description} is similar
to \Cref{def:proximity on KB-12}. In \Cref{p:Sbox Sdia independent representatives} we showed that the definitions of $\Sbox$ and $\Sdia$ are independent of the normal forms used to represent elements. The same is true for $\Sem$ and the proof is similar, so we skip it.

\begin{proposition}\label{p:Sem independent representatives}
Let $S \colon A \to B$ be a subordination. If $x \in \K(A)$ and $y \in \K(B)$ are written in disjunctive and conjunctive normal form
\begin{equation*}
x=\bigvee_{i=1}^n (\Box_{a_i} \wedge \Diamond_{b_{i1}} \wedge \cdots \wedge \Diamond_{b_{i n_i}}), \quad y=\bigwedge_{j=1}^m (\Diamond_{c_j} \vee \Box_{d_{j1}} \vee \cdots \vee \Box_{d_{j m_j}}),
\end{equation*}
then $x \Semrel y$ iff for all $i,j$ there exists $k \le m_j$ with $a_i \S d_{jk}$ or there exists $l \le n_i$ with $b_{i l} \S c_j$. 
\end{proposition}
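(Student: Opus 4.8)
The plan is to mimic the proof of \Cref{p:Sbox Sdia independent representatives}, reducing the independence-of-representatives claim for $\Sem$ to the corresponding facts for $\Sbox$ and $\Sdia$ that we already have. Recall that $\Sem = \Sbox \vee \Sdia$ in the frame $\BAS(\K(A),\K(B))$, and that by \Cref{lem:join subordinations} this join is described explicitly: $x \Semrel y$ holds iff there are finite $F \subseteq \K(A)$ and $G \subseteq \K(B)$ with $x = \bigvee F$, $y = \bigwedge G$, and for all $p \in F$, $q \in G$ we have $p \Sboxrel q$ or $p \Sdiarel q$.

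The right-to-left implication is the easy direction: if for the \emph{given} normal forms of $x$ and $y$ we have, for all $i,j$, that $a_i \S d_{jk}$ for some $k \le m_j$ or $b_{il} \S c_j$ for some $l \le n_i$, then taking $F = \{\,\Box_{a_i} \wedge \Diamond_{b_{i1}} \wedge \cdots \wedge \Diamond_{b_{in_i}} \mid i \le n\,\}$ and $G = \{\,\Diamond_{c_j} \vee \Box_{d_{j1}} \vee \cdots \vee \Box_{d_{jm_j}} \mid j \le m\,\}$, one checks that for each $p = \Box_{a_i} \wedge \Diamond_{b_{i1}} \wedge \cdots \in F$ and $q = \Diamond_{c_j} \vee \cdots \in G$, either $p \Sboxrel q$ (using the first alternative, with the one-disjunct/one-conjunct normal forms for $p$ and $q$) or $p \Sdiarel q$ (using the second); hence $x \Semrel y$ by the description of the join. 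For the left-to-right implication, suppose $x \Semrel y$; then there exist finite $F, G$ as above witnessing the join. Fix $i \le n$ and $j \le m$. As in the proof of \Cref{p:Sbox Sdia independent representatives}, put $p = \Box_{a_i} \wedge \Diamond_{b_{i1}} \wedge \cdots \wedge \Diamond_{b_{in_i}}$ and $q = \Diamond_{c_j} \vee \Box_{d_{j1}} \vee \cdots \vee \Box_{d_{jm_j}}$, so $p \le x = \bigvee F$ and $y = \bigwedge G \le q$. For each $r \in F$ and $s \in G$, by hypothesis either $r \Sboxrel s$ or $r \Sdiarel s$. Now run the argument of \Cref{p:Sbox Sdia independent representatives}: from $p \le \bigvee F$ and each $r \in F$ written in disjunctive normal form, extract (using \Cref{lem:le in KB} together with $a_i \ne 0$) a single disjunct $\Box_{e} \wedge \Diamond_{f_1} \wedge \cdots$ of some $r \in F$ with $a_i \le e$ and $b_{i t} \le f_t$ for appropriate indexing — more precisely, one uses the weaker inequalities $p \le \Diamond_0 \vee \bigvee_r \Box_{e_r}$ to locate $r'$ with $a_i \le e_{r'}$, and dually $p \le \bigvee_r (\Box_1 \vee \Diamond_{f_{r1}} \vee \cdots)$-type reasoning, or simply keep track of both box- and diamond-parts of the chosen disjunct.

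The genuinely new wrinkle, and the step I expect to be the main obstacle, is bookkeeping the \emph{disjunction} ``$\Sboxrel$ or $\Sdiarel$'' coherently across all $s \in G$ while we have fixed a single disjunct of a single $r \in F$. For $r$ fixed and varying $s$, each $s$ satisfies $r \Sboxrel s$ or $r \Sdiarel s$, but which alternative holds may depend on $s$. However, once we have located the relevant disjunct $\Box_{e} \wedge \Diamond_{f_1} \wedge \cdots \wedge \Diamond_{f_p}$ of $r$ with $a_i \le e$ and the $b$'s dominating the $f$'s (after the normal-form manipulation $\Box_e \wedge \Diamond_f = \Box_e \wedge \Diamond_{e \wedge f}$), the partition of $G$ into those $s$ with $r \Sboxrel s$ and those with $r \Sdiarel s$ lets us split $q$ accordingly: for each $s$ in the box-part, extract via \Cref{lem:le in KB} a conjunct-component $h$ of $s$ with $e \S h$, meet these over $s$, and push through $\bigwedge G \le q$ to get $k \le m_j$ with $a_i \le e \S (\bigwedge h) \le d_{jk}$; for each $s$ in the diamond-part, extract $g$ with $f_l \S g$ for suitable $l$, meet, and similarly get $b_{il} \S c_j$. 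The subtlety is that the meet over $G$ mixes box-type and diamond-type extracted elements, so one must be careful that the resulting element still lies below $q$ in the way \Cref{lem:le in KB} requires; the clean way out is to observe that $q = \Diamond_{c_j} \vee \Box_{d_{j1}} \vee \cdots$ has exactly one $\Diamond$-summand and several $\Box$-summands, so a product of extracted elements either forces $a_i \le$ some $d_{jk}$ (from a $\Box$-summand) or forces some $b_{il} \le c_j$ (from the $\Diamond$-summand) — exactly the disjunction in the statement. Once this case analysis is set up, the remaining computations are the same routine applications of \Cref{lem:le in KB} and axioms \eqref{S1}--\eqref{S4} as in the $\Sbox$ proof, which is why the authors say the proof is similar and skip it.
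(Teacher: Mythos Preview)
Your overall plan—reduce to the independence results for $\Sbox$ and $\Sdia$ (\cref{p:Sbox Sdia independent representatives}) via the join description of $\Sem$ in \cref{lem:join subordinations}—is exactly the route the paper intends, since the paper omits the proof and says it is ``similar'' to \cref{p:Sbox Sdia independent representatives}. The reduction to the case $n=m=1$ via \eqref{S4} (i.e.\ $p_i \le x \Semrel y \le q_j$ implies $p_i \Semrel q_j$) is a clean simplification and is implicit in both approaches.

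There is, however, a genuine gap in your bookkeeping step. You assert that from $p_i \le \bigvee F$ one can extract a single disjunct $\Box_e \wedge \Diamond_{f_1} \wedge \cdots$ with \emph{both} $a_i \le e$ \emph{and} the $b_{il}$'s dominating the $f$'s. The proof of \cref{p:Sbox Sdia independent representatives} does not do this: it only obtains $a_i \le e_{r'}$ (via $p_i \le \Diamond_0 \vee \bigvee_r \Box_{e_r}$ and \cref{lem:le in KB}), and says nothing about the $f$'s relative to the $b$'s. In fact such a disjunct need not exist. This matters precisely in the case you flag as the ``new wrinkle'': once you partition $G$ into a $\Box$-part and a $\Diamond$-part relative to your chosen $r'$, the $\Diamond$-part gives $f_{r',l_s} \S g_s$, and after pushing through $\bigwedge G \le q_j$ you land on the relation $f_{r',l_s} \S c_j$—not $b_{il} \S c_j$. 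Your ``clean way out'' in the last paragraph does not close this gap: observing that $q_j$ has a single $\Diamond$-summand tells you which side of the disjunction you are aiming for, but it does not supply the missing link from $f_{r',l'}$ back to some $b_{il}$.

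What actually works is to run \emph{both} the $\Sbox$-style extraction (find $r'$ with $a_i \le e_{r'}$) and the dual $\Sdia$-style extraction (find $s'$ with $g_{s'} \le c_j$, from $\bigwedge_s \Diamond_{g_s} \le y \le q_j$), and then build the auxiliary element more carefully: with $h^* = \bigwedge_{s \in U_\Box} h_{s,q_s}$, the element $\Box_{h^*} \wedge \bigwedge_{s \in U_\Diamond} \Diamond_{g_s \wedge h^*}$ lies below $y \le q_j$, and \cref{lem:le in KB} yields either $h^* \le d_{jk}$ (done, via $a_i \S h^*$) or $g_s \wedge h^* \le c_j$ for some $s \in U_\Diamond$. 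In the latter case you must then switch to the $\Sdia$-style argument on the $x$-side—using $p_i \le x \le \Diamond_{f^*} \vee \bigvee \Box_{e_\alpha}$ for a suitable $f^*$ built from the $f_{\alpha,l'}$'s with $f_{\alpha,l'} \S g_s$—to finally extract $b_{il} \le f^*$. The argument is ``similar'' to \cref{p:Sbox Sdia independent representatives} in that each individual step is of the same kind, but the interleaving of the two styles and the handling of degenerate cases ($g_s \wedge h^* = 0$, or some $g_s = 0$) make it noticeably longer; this is presumably why the paper chose to skip it rather than write it out.
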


\begin{definition}\label{d:Ks}
For a boolean algebra $B$, let $\Ks(B)=\K(B)$; and for a morphism $S$ in $\BAS$, let $\Ks(S)=\Sem$.
\end{definition}

\begin{theorem}\label{t:Ks functor}
$\Ks$ is an endofunctor on $\BAS$.
\end{theorem}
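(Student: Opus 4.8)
The plan is to verify directly that $\Ks$ satisfies the three requirements of a functor: it is well defined on objects and morphisms, it preserves identities, and it preserves composition. The object assignment $\Ks(B) = \K(B)$ is the one from \Cref{def:KB}, so nothing new is needed there. For morphisms, $\Ks(S) = \Sem$ is a subordination by \Cref{c:Sem description}, so $\Ks(S)$ is indeed a morphism in $\BAS$ whenever $S$ is. Thus the only substantive points are preservation of identities and preservation of composition.

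For identities, I would recall that the identity on $B$ in $\BAS$ is the order relation $\le$ on $B$, so I must show $\Ks({\le_B}) = {\le_{\K(B)}}$, i.e.\ $(\le_B)_{EM}$ is the order on $\K(B)$. Using the description in \Cref{p:Sem independent representatives}, $x \mathrel{(\le_B)_{EM}} y$ holds iff for all $i,j$ there is $k \le m_j$ with $a_i \le d_{jk}$ or there is $l \le n_i$ with $b_{il} \le c_j$; by \Cref{lem:le in KB} this is exactly the condition for $x \le y$ in $\K(B)$. So identities are preserved essentially for free, once the normal-form machinery is invoked.

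The real work is preservation of composition: given subordinations $S \colon B_1 \to B_2$ and $T \colon B_2 \to B_3$, I must show $\Ks(T \circ S) = \Ks(T) \circ \Ks(S)$, that is, $(T \circ S)_{EM} = T_{EM} \circ S_{EM}$. Here I expect to lean heavily on \Cref{t:Ksbox Ksdia Ks (semi-)functors}, which already gives $(T\circ S)_\Box = T_\Box \circ S_\Box$ and $(T \circ S)_\Diamond = T_\Diamond \circ S_\Diamond$. Since $\Sem = \Sbox \vee \Sdia$ is a join in the frame $\BAS(\K(B_1),\K(B_3))$ (\Cref{lem:join subordinations}), one inclusion — namely $T_{EM}\circ S_{EM} \supseteq (T\circ S)_{EM}$, or perhaps the reverse — should follow from monotonicity of composition together with the semi-functoriality of $\Ksbox$ and $\Ksdia$ and the fact that $S_\Box, S_\Diamond \subseteq S_{EM}$. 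For the remaining inclusion I anticipate having to argue concretely with normal forms, in the style of the proof of \Cref{t:Ksbox Ksdia Ks (semi-)functors}: starting from $x \mathrel{(T_{EM}\circ S_{EM})} y$ with a witness $z \in \K(B_2)$, write $x$, $z$, $y$ in suitable normal forms and, for each pair of disjuncts of $x$ and conjuncts of $y$, trace the witnesses for $x \mathrel{S_{EM}} z$ and $z \mathrel{T_{EM}} y$ through $B_2$, using the interpolation-free but meet/join-closure properties of subordinations (axioms \eqref{S2}, \eqref{S3}) to combine the intermediate elements of $B_2$ into a single element witnessing $a_i \mathrel{(T\circ S)} d_{jk}$ or $b_{il} \mathrel{(T \circ S)} c_j$, then invoke \Cref{p:Sem independent representatives}.

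The main obstacle will be the concrete normal-form bookkeeping in the composition argument: unlike the $\Box$-only computation in \Cref{t:Ksbox Ksdia Ks (semi-)functors}, here the witness for $x \mathrel{S_{EM}} z$ for a given pair may be of "$\Box$-type" ($a_i \mathrel{S} \cdot$) or "$\Diamond$-type" ($b_{il} \mathrel{S} \cdot$), and likewise for $z \mathrel{T_{EM}} y$, so the chaining through $B_2$ splits into cases according to whether a $\Box$-witness meets a $\Box$-witness, a $\Diamond$-witness, etc., and one must check that the "mixed" cases (a $\Box$-witness on one side, a $\Diamond$-witness on the other, meeting at a common conjunct/disjunct of $z$) actually produce a valid $(T\circ S)_{EM}$-witness. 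Handling these cases cleanly — quite possibly by exploiting the frame structure and the decomposition $\Sem = \Sbox \vee \Sdia$ to reduce to the already-proven box and diamond cases rather than redoing the combinatorics by hand — is where the proof will require care.
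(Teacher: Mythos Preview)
Your outline matches the paper's proof closely: identities via \Cref{lem:le in KB} and \Cref{p:Sem independent representatives}, the inclusion $\Ks(T\circ S)\subseteq \Ks(T)\circ\Ks(S)$ from semi-functoriality of $\Ksbox,\Ksdia$ together with $\Sbox,\Sdia\subseteq\Sem$ and monotonicity of composition, and the reverse inclusion by a normal-form chase through a witness $z\in\K(B_2)$.

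The one place to correct your expectations is the hoped-for shortcut. Composition in $\BAS$ does \emph{not} distribute over the frame join of \Cref{lem:join subordinations} in the direction you would need, so you cannot reduce $\Ks(T)\circ\Ks(S)=(T_\Box\vee T_\Diamond)\circ(S_\Box\vee S_\Diamond)$ to the four ``pure'' composites and then invoke \Cref{t:Ksbox Ksdia Ks (semi-)functors}; even if you could, the mixed terms $T_\Box\circ S_\Diamond$ and $T_\Diamond\circ S_\Box$ are not contained in $(T\circ S)_{EM}$ in any obvious way. The paper does the combinatorics by hand, and the device that resolves your ``mixed case'' is this: fix $i,j$, write $z$ in conjunctive normal form, and split its conjuncts into those where the $\Box$-clause of the $i$-th disjunct of $x$ provides the $\Sem$-witness (say $a_i\S f_{rl_r}$ for $r\le t'$) and those where a $\Diamond$-clause does (say $b_{is}\S e_r$ for $r>t'$). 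Setting $f=\bigwedge_{r\le t'}f_{rl_r}$, the single disjunctive-normal-form element
\[
\Box_f\wedge\Diamond_{e_{t'+1}\wedge f}\wedge\cdots\wedge\Diamond_{e_t\wedge f}
\]
lies below $z$, hence is $\Ks(T)$-related to the $j$-th conjunct of $y$. If the resulting $T$-witness is of $\Box$-type ($f\T d_{jk}$) you get $a_i\S f\T d_{jk}$. If it is of $\Diamond$-type ($(e_r\wedge f)\T c_j$ for some $r>t'$), the normal-form constraint $b_{is}\le a_i$ is exactly what saves you: from $a_i\S f$ you get $b_{is}\S f$, hence $b_{is}\S e_r\wedge f\T c_j$. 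So the ``mixed'' chaining works, but only because you built $f$ into the diamond arguments and used $b_{il}\le a_i$; that is the idea your plan is missing.
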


\begin{proof}
We show that $\Ks$ preserves composition; that is, $\Ks(T \circ S) = \Ks(T) \circ \Ks(S)$. We first show the left-to-right inclusion.
By \Cref{def:Sem}, $\Ksbox(S), \Ksdia(S) \subseteq \Ks(S)$ and $\Ksbox(T), \Ksdia(T) \subseteq \Ks(T)$. It then follows that 
\[
(\Ksbox(T) \circ \Ksbox(S)) \subseteq (\Ks(T) \circ \Ks(S)) \quad \text{and} \quad (\Ksdia(T) \circ \Ksdia(S)) \subseteq (\Ks(T) \circ \Ks(S)).
\]
Since $\Ksbox$ and $\Ksdia$ are semi-functors, we have
\begin{align*}
\Ks(T \circ S) & = \Ksbox(T \circ S) \vee \Ksdia(T \circ S) = (\Ksbox(T) \circ \Ksbox(S)) \vee (\Ksdia(T) \circ \Ksdia(S)).
\end{align*}
Thus, $\Ks(T \circ S) \subseteq \Ks(T) \circ \Ks(S)$. To prove the other inclusion, let $x \in \Ks(B_1)$ and $y \in \Ks(B_3)$ be written in disjunctive and conjunctive normal form as above, and assume that $x \left(\Ks(T) \circ \Ks(S)\right) y$. Then there exists $z \in \Ks(B_2)$ such that $x \mathrel{\Ks(S)} z \mathrel{\Ks(T)} y$. Write $z$ in conjunctive normal form
\[
z = \bigwedge_{r=1}^t (\Diamond_{e_r} \vee \Box_{f_{r1}} \vee \cdots \vee \Box_{f_{r t_r}}).
\]
Fix $i \le n$ and $j \le m$. We can reorder the conjuncts in the conjunctive normal form of $z$ in such a way that there is $t' \le t$ such that $r \le t'$ implies that there is $l_r \le t_r$ with $a_i \S f_{r l_r}$ and $r > t'$ implies that there is $s \le n_i$ such that $b_{i s} \S e_r$. We also allow $t'=0$ for the case in which the first condition never holds. Let $f = \bigwedge_{r=1}^{t'} f_{r l_r}$. Then
\begin{align*}
\Box_f \wedge \Diamond_{(e_{t'+1} \wedge f)} \wedge \dots \wedge \Diamond_{(e_t \wedge f)} & = \Box_{f_{1 l_1}} \wedge \dots \wedge \Box_{f_{t' l_{t'}}} \wedge \Diamond_{(e_{t'+1} \wedge f)} \wedge \dots \wedge \Diamond_{(e_t \wedge f)}\\ & \le z \mathrel{\Ks(T)} y
 \le \Diamond_{c_j} \vee \Box_{d_{j1}} \vee \cdots \vee \Box_{d_{j m_j}}.
\end{align*}
Thus, there exists $k \le m_j$ such that $f \T d_{jk}$ or there exists $r > t'$ such that $(e_r \wedge f) \T c_j$. In the first case,
$a_i \S \bigwedge_{r=1}^{t'} f_{r l_r} = f \T d_{jk}$, and hence $a_i \mathrel{(T \circ S)} d_{jk}$. In the second case, there is $s \le n_i$ such that $b_{i s} \S e_r$. Since $x$ is written in conjunctive normal form,
$b_{is} \le a_i \S f$, and hence $b_{is} \S f$. Thus, $b_{is} \S (e_r \wedge f) \T c_j$, which implies that $b_{is} \mathrel{(T \circ S)} c_j$. In either case, we have that $x \mathrel{\Ks(T \circ S)} y$.

Recall from \cref{sec: subordinations} that the identity on $B \in \BAS$ is the partial order $\le$. By \Cref{lem:le in KB}, $\Ks(\le)$ is the partial order $\le$ on $\K(B)$. Therefore, $\Ks$ preserves identities, hence is an endofunctor on $\BAS$.
\end{proof}

The dagger $(-)^\dagger \colon \KHausR \to \KHausR$ introduced before \cref{l:Vr and dagger} clearly restricts to $\StoneR$. The corresponding dagger $(-)^\dagger$ on $\BAS$ is given by mapping a subordination $S \colon A \to B$ to the subordination $S^\dagger \colon B \to A$ defined by $b \S^\dagger a$ iff $\neg a \S \neg b$ (see \cite[Thm.~2.14]{ABC22a}).
Since $(-)^\dagger \colon \BAS(A,B) \to \BAS(B,A)$ preserves inclusion, it gives an order-isomorphism between $\BAS(A,B)$ and $\BAS(B,A)$.
We conclude this section by observing that $\Ksbox$ and $\Ksdia$ are definable from each other using $(-)^\dagger$ and that $\Ks$ commutes with $(-)^\dagger$.

\begin{proposition}\label{l:Ks and dagger}
\hfill\begin{enumerate}[label=\normalfont(\arabic*), ref = \arabic*]
\item\label{l:Ks and dagger:item1} $\Ksbox \circ (-)^\dagger = (-)^\dagger \circ \Ksdia$ and $\Ksdia \circ (-)^\dagger = (-)^\dagger \circ \Ksbox$.
\item\label{l:Ks and dagger:item2} $\Ks \circ (-)^\dagger = (-)^\dagger \circ \Ks$.
\end{enumerate}
\end{proposition}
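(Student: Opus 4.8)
The plan is to mimic the proof of \Cref{l:Vr and dagger} on the algebraic side. Since $\Ksbox$, $\Ksdia$, and $\Ks$ all act as $\K(B)$ on objects, and $(-)^\dagger$ on $\BAS$ fixes objects, it suffices to check that the two composites agree on morphisms. So fix a subordination $S \colon A \to B$; the goal is to show $\Ksbox(S^\dagger) = \Ksdia(S)^\dagger$ and $\Ksdia(S^\dagger) = \Ksbox(S)^\dagger$ in \eqref{l:Ks and dagger:item1}, and then derive \eqref{l:Ks and dagger:item2} from these using \Cref{def:Sem}.

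For \eqref{l:Ks and dagger:item1}, the key observation is that the dagger on $\BAS$ sends $S \colon A \to B$ to $S^\dagger \colon B \to A$ with $b \mathrel{S^\dagger} a$ iff $\neg a \S \neg b$, and that passing to complements swaps $\Box$ and $\Diamond$ in $\K$: by \Cref{rem:facts about KB}\eqref{i:interdefinition}, $\neg \Box_a = \Diamond_{\neg a}$ and $\neg \Diamond_b = \Box_{\neg b}$. Concretely, I would take $y \in \K(A)$ in disjunctive normal form and $x \in \K(B)$ in conjunctive normal form, say
\[
y = \bigvee_{i=1}^n (\Box_{a_i} \wedge \Diamond_{b_{i1}} \wedge \cdots \wedge \Diamond_{b_{i n_i}}), \qquad x = \bigwedge_{j=1}^m (\Diamond_{c_j} \vee \Box_{d_{j1}} \vee \cdots \vee \Box_{d_{j m_j}}),
\]
and then observe that $\neg x = \bigvee_{j=1}^m (\Box_{\neg c_j} \wedge \Diamond_{\neg d_{j1}} \wedge \cdots \wedge \Diamond_{\neg d_{j m_j}})$ is in disjunctive normal form (using $c_j \le d_{jk} \ne 1$, so $0 \ne \neg d_{jk} \le \neg c_j$) and $\neg y = \bigwedge_{i=1}^n (\Diamond_{\neg a_i} \vee \Box_{\neg b_{i1}} \vee \cdots \vee \Box_{\neg b_{i n_i}})$ is in conjunctive normal form. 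Now unwind $x \mathrel{\Ksbox(S^\dagger)} y$ using \Cref{def:proximity on KB-12}: it holds iff for all $i,j$ there is $k$ with $d_{jk} \mathrel{S^\dagger} b_{ik'}$... more precisely one matches the $\Box$-part of $x$ against the normal form of $y$. Chasing the definitions, $x \mathrel{\Ksbox(S^\dagger)} y$ should translate (via $b \mathrel{S^\dagger} a \iff \neg a \S \neg b$) into the condition defining $\neg y \mathrel{\Ksdia(S)} \neg x$, which by the description of the dagger on $\BAS$ is exactly $x \mathrel{\Ksdia(S)^\dagger} y$. The independence of the defining condition from the chosen normal forms, guaranteed by \Cref{p:Sbox Sdia independent representatives}, is what makes this bookkeeping legitimate. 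The second equality of \eqref{l:Ks and dagger:item1} follows symmetrically, or from the first together with $((-)^\dagger)^2 = \mathrm{id}$.

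For \eqref{l:Ks and dagger:item2}, I would use that $\Ks(S) = \Ksbox(S) \vee \Ksdia(S)$ in the frame $\BAS(\K(A),\K(B))$ (\Cref{def:Sem}, \Cref{lem:join subordinations}), that $(-)^\dagger \colon \BAS(\K(A),\K(B)) \to \BAS(\K(B),\K(A))$ is an order isomorphism (noted in the text), hence preserves binary joins, and then apply \eqref{l:Ks and dagger:item1}:
\[
\Ks(S^\dagger) = \Ksbox(S^\dagger) \vee \Ksdia(S^\dagger) = \Ksdia(S)^\dagger \vee \Ksbox(S)^\dagger = \big(\Ksdia(S) \vee \Ksbox(S)\big)^\dagger = \Ks(S)^\dagger.
\]

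The main obstacle is purely combinatorial: verifying precisely that the matching condition in \Cref{def:proximity on KB-12} for $\Ksbox(S^\dagger)$ on the pair $(x,y)$ coincides, after applying complements and swapping the roles of the two arguments, with the matching condition for $\Ksdia(S)$ on the pair $(\neg y, \neg x)$. This requires being careful about which indices range over $\Box$-summands versus $\Diamond$-summands and about the direction of the subordination; \Cref{rem:facts about KB}\eqref{i:interdefinition} and \Cref{p:Sbox Sdia independent representatives} are the tools that make it go through, but the index manipulation is the part that must be written out with care rather than waved through.
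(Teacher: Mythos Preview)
Your approach is essentially identical to the paper's: reduce to morphisms, use that negation swaps $\Box$ and $\Diamond$ to pass between normal forms, apply \Cref{p:Sbox Sdia independent representatives} to translate the $\Ksbox(S^\dagger)$ condition into the $\Ksdia(S)$ condition on the negations, and for part~\eqref{l:Ks and dagger:item2} use that $(-)^\dagger$ is an order-isomorphism on hom-posets to push it through the join. The only slip is in your setup of the normal forms: since $\Ksbox(S^\dagger)\colon \K(B)\to\K(A)$, to apply \Cref{p:Sbox Sdia independent representatives} to $x \mathrel{\Ksbox(S^\dagger)} y$ you need $x\in\K(B)$ in \emph{disjunctive} normal form and $y\in\K(A)$ in \emph{conjunctive} normal form (you have these swapped); once you flip them, the negated forms land exactly where you need them for $\neg y \mathrel{\Ksdia(S)} \neg x$, and the index bookkeeping you flag as the obstacle becomes routine.
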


\begin{proof}
Since $\Ksbox, \Ksdia, \Ks$ coincide on objects and $(-)^\dagger$ fixes the objects, we only need to show that the compositions agree on the morphisms.

\eqref{l:Ks and dagger:item1}. 
We only prove the first equality because the second is proved similarly. Let $S \colon A \to B$ be a morphism in $\BAS$. Suppose that $x \in \K(A)$ and $y \in \K(B)$ are written in disjunctive and conjunctive normal form
\begin{align*}
x=\bigvee_{i=1}^n (\Box_{a_i} \wedge \Diamond_{b_{i1}} \wedge \cdots \wedge \Diamond_{b_{i n_i}}), \quad y=\bigwedge_{j=1}^m (\Diamond_{c_j} \vee \Box_{d_{j1}} \vee \cdots \vee \Box_{d_{j m_j}}).
\end{align*}
As $\neg \Box_a=\Diamond_{\neg a}$ and $\neg \Diamond_b=\Box_{\neg b}$, we obtain that $\neg x$ and $\neg y$ can be written in conjunctive and disjunctive normal form
\begin{align*}
\neg x=\bigwedge_{i=1}^n (\Diamond_{\neg a_i} \vee \Box_{\neg b_{i1}} \vee \cdots \vee \Box_{\neg b_{i n_i}}), \quad \neg y=\bigvee_{j=1}^m (\Box_{\neg c_j} \wedge \Diamond_{\neg d_{j1}} \wedge \cdots \wedge \Diamond_{\neg d_{j m_j}}).
\end{align*}
By \Cref{p:Sbox Sdia independent representatives}, $x \mathrel{\Ksbox(S^\dagger)} y$ iff for all $i,j$ there exists $k$ such that $a_i \mathrel{S^\dagger} d_{jk}$, which means that $\neg d_{jk} \S \neg a_i$. Applying \Cref{p:Sbox Sdia independent representatives} again, this is equivalent to $\neg y \mathrel{\Ksdia(S)} \neg x$, which means that $x \mathrel{\Ksdia(S)^\dagger} y$. 

\eqref{l:Ks and dagger:item2}.
Since $(-)^\dagger \colon \BAS(A,B) \to \BAS(B,A)$ is an order-isomorphism, $(S_1 \vee S_2)^\dagger=S_1^\dagger \vee S_2^\dagger$ for each $S_1,S_2 \in \BAS(A,B)$. Therefore, by \eqref{l:Ks and dagger:item1}, for each subordination $S \colon A \to B$ we have
\[
\Ks(S^\dagger) = \Ksbox(S^\dagger) \vee \Ksdia(S^\dagger) = \Ksdia(S)^\dagger \vee \Ksbox(S)^\dagger = (\Ksdia(S) \vee \Ksbox(S))^\dagger = \Ks(S)^\dagger.\qedhere
\]
\end{proof}

\begin{remark}
By \cref{l:Ks and dagger}\eqref{l:Ks and dagger:item2}, $\Ks$ commutes with the dagger on $\BAS$. It follows from \cref{c:Sem description}  that $\Ks$ preserves inclusions of subordinations. Thus, $\Ks$ is a morphism of order-enriched categories with involution. However, like $\Vr$, the functor $\Ks$ is not a morphism of allegories. This follows from \cref{ex:VR not morph allegories} and Theorems~\ref{t:StoneR BAS equiv}, \ref{t:commute}\eqref{t:commute:item3}.
\end{remark}

\section{Connecting \texorpdfstring{$\Ks$}{K{\textasciicircum}S} and \texorpdfstring{$\Vr$}{V{\textasciicircum}R}} \label{sec: connecting K and V}
In this section we show that, under the equivalence between $\StoneR$ and $\BAS$, the endofunctor $\Vr$ on $\StoneR$ corresponds to the endofunctor $\Ks$ on $\BAS$. Analogous results are obtained for $\Vrbox$ and $\Ks_\Box$ as well as for $\Vrdia$ and $\Ks_\Diamond$.

\begin{proposition} \label{prop:description-on-morphisms}
	Suppose that $R \colon X \to Y$ is a closed relation between Stone spaces. Let $\mathcal{U}\in\Clop(\V(X))$ and $\mathcal{W} \in \Clop(\V(Y))$ be written in disjunctive and conjunctive normal form
	\[
	\mathcal{U} = \bigcup_{i=1}^n (\Box_{U_i} \cap \Diamond_{V_{i 1}} \cap \cdots \cap \Diamond_{V_{i n_i}}), \quad \mathcal{W} = \bigcap_{j=1}^m (\Diamond_{W_j} \cup \Box_{Z_{j 1}} \cup \cdots \cup \Box_{Z_{j m_j}})
	\]
	with clopen sets $\varnothing \neq V_{il} \subseteq U_i$ for each $i,l$, and $W_j \subseteq Z_{jk} \neq Y$ for each $j,k$.
	\begin{enumerate}[label=\normalfont(\arabic*), ref = \arabic*]
		\item \label{i:dual-box}
		$\mathcal{U} \mathrel{S_{\Rbox}} \mathcal{W}$ iff $\forall i,j \, \exists k \le m_j : U_i \mathrel{S_R} Z_{jk}$,
		\item \label{i:dual-diamond}
		$\mathcal{U} \mathrel{S_{\Rdia}} \mathcal{W}$ iff $\forall i,j \, \exists l \le n_i : V_{il} \mathrel{S_R} W_j$,
		\item \label{i:dual-EM}
		$\mathcal{U} \mathrel{S_{\Rem}} \mathcal{W}$ iff $\forall i,j \, ((\exists k \le m_j : U_i \mathrel{S_R} Z_{jk}) \vee (\exists l \le n_i : V_{il} \mathrel{S}_R W_j))$.
	\end{enumerate}
\end{proposition}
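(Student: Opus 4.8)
The plan is to reduce everything to the characterization of the subordination $S_R$ from \cref{r:Clop and Uf} together with the topological description of $\Rbox, \Rdia, \Rem$ from \cref{lem:R R1 R2 closed}, and then translate the ``there exists a neighborhood'' statements of \cref{lem:R closed equiv condition}\eqref{lem:R closed equiv condition:item3} into inclusions between clopen sets. Since $X$ and $Y$ are Stone spaces, every closed set is an intersection of clopens, so the key observation to establish first is that for a closed relation $R \colon X \to Y$ and clopen $U \subseteq X$, $V \subseteq Y$ we have $R[U] \subseteq V$ iff $U \mathrel{S_R} V$, and more usefully that for closed $F \subseteq X$ we can compute $R[F]$ as the intersection of the clopen sets $V$ with $U \mathrel{S_R} V$ for suitable clopen $U \supseteq F$; dually for $R^{-1}$. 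I would record these as preliminary remarks (they are essentially \cref{lem:R closed equiv condition}\eqref{lem:R closed equiv condition:item2} combined with compactness).

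\textbf{Main steps.} By \cref{r:Vr and dagger functor} (the dagger argument) it suffices to prove \eqref{i:dual-box}, since \eqref{i:dual-diamond} then follows by applying \eqref{i:dual-box} to $R^\dagger$ using \cref{l:Vr and dagger}\eqref{l:Vr and dagger:item1} and the fact that $(-)^\dagger$ on $\BAS$ is computed by De Morgan duality (\cref{l:Ks and dagger}); and \eqref{i:dual-EM} follows from \eqref{i:dual-box} and \eqref{i:dual-diamond} together with \cref{c:Sem description} and the fact that $\Clop$ takes intersections of closed relations to intersections of subordinations (so $S_{\Rem} = S_{\Rbox \cap \Rdia} = S_{\Rbox} \cap S_{\Rdia}$, but one must check this intersection matches the join-based definition of $\Sem$; alternatively, and more directly, apply the already-proved first two items clausewise and invoke \cref{p:Sem independent representatives}). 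For \eqref{i:dual-box} itself, I would unwind the definition: $\mathcal{U} \mathrel{S_{\Rbox}} \mathcal{W}$ means $\Rbox[\mathcal U] \subseteq \mathcal W$ in $\V(Y)$. Since $\mathcal W = \bigcap_j (\Diamond_{W_j} \cup \Box_{Z_{j1}} \cup \cdots \cup \Box_{Z_{jm_j}})$ and $\mathcal U = \bigcup_i (\Box_{U_i} \cap \Diamond_{V_{i1}} \cap \cdots \cap \Diamond_{V_{in_i}})$, this is equivalent to: for every $i,j$ and every closed $F \in \Box_{U_i} \cap \Diamond_{V_{i1}} \cap \cdots \cap \Diamond_{V_{in_i}}$, every $G$ with $G \subseteq R[F]$ lies in $\Diamond_{W_j} \cup \Box_{Z_{j1}} \cup \cdots \cup \Box_{Z_{jm_j}}$. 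The right-to-left direction is easy: if $U_i \mathrel{S_R} Z_{jk}$, i.e.\ $R[U_i] \subseteq Z_{jk}$, then for $F \subseteq U_i$ we get $R[F] \subseteq R[U_i] \subseteq Z_{jk}$, so any $G \subseteq R[F]$ is in $\Box_{Z_{jk}}$. For left-to-right, the trick is to choose a good test pair: take $F = U_i$ (which is clopen, hence in $\V(X)$, and clearly in $\Box_{U_i}$; the conjuncts $\Diamond_{V_{il}}$ are satisfied because $\varnothing \neq V_{il} \subseteq U_i$) and take $G = R[U_i]$, which is closed by \cref{lem:R closed equiv condition}\eqref{lem:R closed equiv condition:item2} and satisfies $G \subseteq R[F]$. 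Feeding this into the hypothesis gives $R[U_i] \in \Diamond_{W_j} \cup \Box_{Z_{j1}} \cup \cdots \cup \Box_{Z_{jm_j}}$; one then argues that the $\Diamond_{W_j}$ disjunct must fail for an appropriately shrunk $F$ (replacing $U_i$ by $U_i \setminus$ a clopen neighborhood disjoint from $R^{-1}[W_j]$ when $R[U_i] \cap W_j$ is nonempty but not forced), forcing some $\Box_{Z_{jk}}$ disjunct, i.e.\ $R[U_i] \subseteq Z_{jk}$, i.e.\ $U_i \mathrel{S_R} Z_{jk}$.

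\textbf{Main obstacle.} The delicate point in the left-to-right direction of \eqref{i:dual-box} is handling the $\Diamond_{W_j}$ disjunct: the naive test pair $(U_i, R[U_i])$ might land in $\Diamond_{W_j}$ without giving us the inclusion we want. The fix is to test more cleverly — either shrink $F$ below $U_i$ so as to kill the $\Diamond$ alternative while keeping $F$ large enough to still meet each $V_{il}$ (possible only if $V_{il} \not\subseteq R^{-1}[W_j]$, and one must separately dispatch the case $V_{il} \subseteq R^{-1}[W_j]$), or run a compactness argument covering $\V(X)$-relevant closed sets by basic opens. I expect this case analysis — making sure the chosen closed set $F$ simultaneously witnesses membership in $\Box_{U_i} \cap \bigcap_l \Diamond_{V_{il}}$ and its $R$-image avoids $W_j$ — to be the part requiring the most care; everything else is a routine translation between the topological and algebraic sides via Stone duality and the normal-form lemmas \cref{lem:le in KB}, \cref{p:Sbox Sdia independent representatives}, \cref{p:Sem independent representatives}.
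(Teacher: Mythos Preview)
Your overall reduction to a single $(i,j)$-pair and the right-to-left direction of \eqref{i:dual-box} are fine, and the idea of deriving \eqref{i:dual-diamond} from \eqref{i:dual-box} via the dagger is sound. But the left-to-right direction of \eqref{i:dual-box} has a real gap. Your plan is to shrink $F$ below $U_i$ so that $R[F]\cap W_j=\varnothing$ while keeping $F\cap V_{il}\ne\varnothing$ for all $l$; as you note, this fails when some $V_{il}\subseteq R^{-1}[W_j]$, and that case cannot be ``separately dispatched'': the condition $R[V_{il}]\subseteq W_j$ says nothing about $U_i \mathrel{S_R} Z_{jk}$, so you are simply stuck. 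The fix is to shrink $G$, not $F$: keep $F=U_i$ and take $G=R[U_i]\setminus W_j$, which is closed since $W_j$ is clopen, satisfies $G\subseteq R[F]$, misses $W_j$, and still fails $G\subseteq Z_{jk}$ for every $k$ because any witness $y\in R[U_i]\setminus Z_{jk}$ automatically lies outside $W_j$ (using $W_j\subseteq Z_{jk}$). This is essentially the paper's argument in disguise: the paper proves the contrapositive by building finite witness sets $F=\{x_1,\dots,x_q\}\cup V_1\cup\cdots\cup V_p$ and $G=\{y_1,\dots,y_q\}$ with $x_k\mathrel{R}y_k$ and $y_k\notin Z_{jk}$, and the hypothesis $W_j\subseteq Z_{jk}$ kills the $\Diamond_{W_j}$ disjunct for free.

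Your route to \eqref{i:dual-EM} is also wrong. The claim ``$\Clop$ takes intersections of closed relations to intersections of subordinations'' is false: the correspondence $R\mapsto S_R$ is order-\emph{reversing} (see the paragraph before \cref{lem:join subordinations}), so $S_{\Rem}=S_{\Rbox\cap\Rdia}=S_{\Rbox}\vee S_{\Rdia}$, not $S_{\Rbox}\cap S_{\Rdia}$. Your alternative of invoking \cref{p:Sem independent representatives} does not apply either, since that proposition is about the algebraically defined $\Sem$ on $\K(A)$, not about $S_{\Rem}$; using it here would presuppose the very identification the proposition is meant to establish. Even granting $S_{\Rem}=S_{\Rbox}\vee S_{\Rdia}$, the join description in \cref{lem:join subordinations} only gives the existence of \emph{some} decomposition with the mixed property, and passing to the \emph{given} normal forms still requires an independence argument. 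The paper avoids all of this by proving \eqref{i:dual-EM} directly, again via the contrapositive: combine the finite witnesses from \eqref{i:dual-box} and \eqref{i:dual-diamond} into single sets $F=\{x_k,x'_l\}$ and $G=\{y_k,y'_l\}$ that simultaneously defeat every $\Box_{Z_{jk}}$ and $\Diamond_{W_j}$.
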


\begin{proof}
	\eqref{i:dual-box}.
	Since $S_{\Rbox}$ is a subordination, 
	it is enough to prove the case $n = m = 1$, i.e.\ that
	\[
	(\Box_U\cap\Diamond_{V_1}\cap\cdots\cap\Diamond_{V_p}) \mathrel{S_{\mathcal{R}_\Box}} (\Diamond_W \cup \Box_{Z_1} \cup \cdots \cup \Box_{Z_q})\mbox{ iff }U \S_R Z_k\mbox{ for some }k \le q,
	\]
	under the assumption that	$\varnothing \neq V_i \subseteq U$ and $W \subseteq Z_j \neq Y$.
	
	To prove the right-to-left implication, suppose that $U \S_R Z_k$. Then	$R[U] \subseteq Z_k$.
	Let $F \in \Box_U \cap \Diamond V_1 \cap \dots \cap \Diamond V_p$ and 
	$G \in \V(Y)$ with $F \mathrel{\Rbox} G$, so $G \subseteq R[F]$.
	Since $F \in \Box_U$, we have $F \subseteq U$, and hence $G \subseteq R[F] \subseteq R[U] \subseteq Z_k$. Thus, $G \in \Box_{Z_k} \subseteq \Diamond_W \cup \Box_{Z_1} \cup \cdots \cup \Box_{Z_q}$, and so	$(\Box_U\cap\Diamond_{V_1}\cap\cdots\cap\Diamond_{V_p}) \mathrel{S_{\Rbox}} (\Diamond_W \cup \Box_{Z_1} \cup \cdots \cup \Box_{Z_q})$.
	
	To prove the other implication, 
	suppose that $U \not\S_R Z_k$ for each $k \le q$, so there are $x_k \in U$ and $y_k \in - Z_k$ such that $x_k \R y_k$.
	Then 
\begin{align*}
&\{x_1, \dots, x_q\} \cup V_1 \cup \dots \cup V_p \in \Box_U \cap \Diamond_{V_1}\cap\cdots\cap\Diamond_{V_p},\\
&\{y_1, \dots, y_q\} \notin \Diamond_W \cup \Box_{Z_1} \cup \cdots \cup \Box_{Z_q}, \text{ and}\\
&\{y_1, \dots, y_q\} \subseteq R[\{x_1, \dots, x_q\}] \subseteq R[\{x_1, \dots, x_q\} \cup V_1 \cup \dots \cup V_p].
\end{align*}
	Therefore, $\{x_1, \dots, x_q\} \cup V_1 \cup \dots \cup V_n \ \mathcal{R}_\Box \ \{y_1, \dots, y_q\}$, proving the failure of 
	\[
	(\Box_U\cap\Diamond_{V_1}\cap\cdots\cap\Diamond_{V_p}) \mathrel{S_{\mathcal{R}_\Box}} (\Diamond_W \cup \Box_{Z_1} \cup \cdots \cup \Box_{Z_q}).
	\]

	\eqref{i:dual-diamond}.	
	Since $S_{\mathcal{R}_\Diamond}$ is a subordination, it is enough to prove the case $n = m = 1$, i.e.\ that
	\[
	(\Box_U\cap\Diamond_{V_1}\cap\cdots\cap\Diamond_{V_p}) \mathrel{S_{\mathcal{R}_\Diamond}} (\Diamond_W \cup \Box_{Z_1} \cup \cdots \cup \Box_{Z_q})\mbox{ iff }V_l \S_R W\mbox{ for some }l \le p,
	\]
	under the assumption that $\varnothing \neq V_i \subseteq U$ and $W \subseteq Z_j \neq Y$.
	
	For the right-to-left implication, suppose that $V_l \S_R W$. Then $R[V_l] \subseteq W$.
	Let $F \in	\Box_U\cap\Diamond_{V_1}\cap\cdots\cap\Diamond_{V_p}$ and $G \in \V(Y)$ with $F \mathrel{\mathcal{R}_\Diamond} G$, so $F \subseteq R^{-1}[G]$.
	Since $F \in \Diamond_{V_l}$, we have $F \cap V_l \neq \varnothing$. 
	Therefore, $R^{-1}[G] \cap V_l \neq \varnothing$,	and hence $G \cap R[V_l] \neq \varnothing$. 
	Thus, $G \cap W \neq \varnothing$.
	Consequently, $G \in \Diamond_W$, which implies that $G \in \Diamond_W \cup \Box_{Z_1} \cup \cdots \cup \Box_{Z_q}$.
	This shows that $(\Box_U\cap\Diamond_{V_1}\cap\cdots\cap\Diamond_{V_p}) \mathrel{S_{\mathcal{R}_\Diamond}} (\Diamond_W \cup \Box_{Z_1} \cup \cdots \cup \Box_{Z_q})$.
	
	For the left-to-right implication, suppose that $V_l \not\S_R W$ for each $l \le p$, so there are $x_l \in V_l$ and $y_l \in -W$ such that $x_l \R y_l$.
	Then 
\begin{align*}
&\{x_1, \dots, x_p\} \in \Box_U \cap \Diamond_{V_1}\cap\cdots\cap\Diamond_{V_p},\\
&\{y_1, \dots, y_p\} \cup -Z_1 \cup \dots \cup -Z_q \notin \Diamond_W \cup \Box_{Z_1} \cup \cdots \cup \Box_{Z_q}, \text{ and}\\
&\{x_1, \dots, x_p\} \subseteq R^{-1}[\{y_1, \dots, y_p\}] \subseteq R^{-1}[\{y_1, \dots, y_p\} \cup -Z_1 \cup \dots \cup -Z_q],
\end{align*}
	which implies that $\{x_1, \dots, x_p\} \ \mathcal{R}_\Diamond \ \{y_1, \dots, y_p\} \cup -Z_1 \cup \dots \cup -Z_q$,	proving the failure of
	\[
	(\Box_U\cap\Diamond_{V_1}\cap\cdots\cap\Diamond_{V_p}) \mathrel{S_{\mathcal{R}_\Diamond}} (\Diamond_W \cup \Box_{Z_1} \cup \cdots \cup \Box_{Z_q}).
	\]
	
	\eqref{i:dual-EM}.
	Since $S_{\mathcal{R}_\mathrm{EM}}$ is a subordination, it is enough to prove the case $n = m = 1$, i.e.\ that 
	\[
		(\Box_U\cap\Diamond_{V_1}\cap\cdots\cap\Diamond_{V_p}) \mathrel{S_{\mathcal{R}_\mathrm{EM}}} (\Diamond_W \cup \Box_{Z_1} \cup \cdots \cup \Box_{Z_q})\mbox{ iff } \exists k: U \S_R Z_k\mbox{ or }\exists l : V_l \S_R W,
	\]
	under the assumption that	$\varnothing \neq V_i \subseteq U$ and $W \subseteq Z_j \neq Y$.
	
	For the right-to-left implication,
	suppose that $U \S_R Z_k$ for some $k \le q$ or $V_l \S_R W$ for some $l \le p$.
	Then, by \eqref{i:dual-box} and \eqref{i:dual-diamond}, $(\Box_U\cap\Diamond_{V_1}\cap\cdots\cap\Diamond_{V_p}) \mathrel{S_{\mathcal{R}_\Box}} (\Diamond_W \cup \Box_{Z_1} \cup \cdots \cup \Box_{Z_q})$ or $(\Box_U\cap\Diamond_{V_1}\cap\cdots\cap\Diamond_{V_p}) \mathrel{S_{\mathcal{R}_\Diamond}} (\Diamond_W \cup \Box_{Z_1} \cup \cdots \cup \Box_{Z_q})$. Thus,
	\[
	(\Box_U\cap\Diamond_{V_1}\cap\cdots\cap\Diamond_{V_p}) \mathrel{S_{\mathcal{R}_\mathrm{EM}}} (\Diamond_W \cup \Box_{Z_1} \cup \cdots \cup \Box_{Z_q}).
	\]
	
	For the other implication, suppose that for each $k \le q$ there are $x_k \in U$ and $y_k \in -Z_k$	such that $x_k \R y_k$, and that for each $l \le p$ there are $x'_l \in V_l$ and $y'_l \in -W$	such that $x'_l \R y'_l$.
	Therefore, 
\begin{align*}
&\{x_1, \dots, x_q, x'_1, \dots, x'_p\} \in \Box_U \cap \Diamond_{V_1}\cap\cdots\cap\Diamond_{V_p},\\
&\{y_1, \dots, y_q, y'_1, \dots, y'_p\} \notin \Diamond_W \cup \Box_{Z_1} \cup \cdots \cup \Box_{Z_q},\\
& \{y_1, \dots, y_q, y_1', \dots, y_p'\} \subseteq R[\{x_1, \dots, x_q, x_1', \dots, x_p'\}], \text{ and}\\
& \{x_1, \dots, x_q, x'_1, \dots, x'_p\} \subseteq R^{-1}[\{y_1, \dots, y_q, y'_1, \dots, y'_p\}].
\end{align*}
	Thus, $\{x_1, \dots, x_q, x'_1, \dots, x'_p\} \ \mathcal{R}_\mathrm{EM} \ \{y_1, \dots, y_q, y_1', \dots, y_p'\}$, proving the failure of 
	\[
	(\Box_U\cap\Diamond_{V_1}\cap\cdots\cap\Diamond_{V_p}) \mathrel{S_{\mathcal{R}_\mathrm{EM}}} (\Diamond_W \cup \Box_{Z_1} \cup \cdots \cup \Box_{Z_q}).\qedhere
	\]
\end{proof}

We next recall the notions of a natural transformation and natural isomorphism between semi-functors.

\begin{definition}[{\cite[Sec.~2.2]{Hoofman1993}}]
	Let $\mathsf{D}, \mathsf{E}$ be categories and $F, G, H \colon \mathsf{D} \to \mathsf{E}$ semi-functors.
	\begin{enumerate}
		\item A \emph{natural transformation} (called ``semi natural transformation'' in \cite[Def.~2.4]{Hoofman1993}) $\alpha$ from $F$ to $G$ is a function which assigns to each object $D \in \mathsf{D}$ a morphism $\alpha_D \colon F(D) \to G(D)$ in $\mathsf{E}$ such that for every morphism $f \colon D \to D'$ in $\mathsf{D}$ the following square commutes
		\[
		\begin{tikzcd}[column sep=5pc, row sep=4pc]
			F(D) \arrow{r}{\alpha_D} \arrow[swap]{d}{F(f)} & G(D)\arrow{d}{G(f)}\\
			F(D') \arrow{r}{\alpha_{D'}}& G(D')
		\end{tikzcd}
		\]
		and for every $D \in \mathsf{D}$ the following diagram commutes (under the hypotheses above, the commutativity of one triangle implies the commutativity of the other triangle).
		\[
		\begin{tikzcd}[column sep=5pc, row sep=4pc]
			F(D) \arrow[swap]{d}{F(\mathrm{id}_D)}\arrow{rd}{\alpha_D} \arrow{r}{\alpha_D} & G(D) \arrow{d}{G(\mathrm{id}_D)}\\
			F(D)\arrow[swap]{r}{\alpha_D}  & G(D)
		\end{tikzcd}
		\]
		\item 
		The composition $\beta \circ \alpha \colon F \to H$ of natural transformations $\alpha \colon F \to G$ and $\beta \colon G \to H$ is the natural transformation defined by $(\beta \circ \alpha)_D = \beta_D \circ \alpha_D$ for $D \in \mathsf{D}$.
		
		\item
		The identity natural transformation $1_F \colon F \to F$ on $F$ has components $(1_F)_D \coloneqq F(\mathrm{id_D})$. (Note that, in general, $(1_F)_D \neq \mathrm{id}_{F(D)}$.)
		
		\item $F$ and $G$ are \emph{naturally isomorphic} (denoted $F \simeq G$) if there are natural transformations $\alpha \colon F \to G$ and $\beta \colon G \to F$ such that $\alpha \circ \beta = 1_G$ and $\beta \circ \alpha = 1_F$.
	\end{enumerate}
\end{definition}

\begin{theorem}\label{t:commute}
We have the following natural isomorphisms:
\begin{enumerate}[label=\normalfont(\arabic*), ref = \arabic*]
\item\label{t:commute:item1}  $\Clop\circ\Vrbox \simeq \Ksbox\circ\Clop$ and $\Uf\circ\Ksbox \simeq \Vrbox\circ\Uf$.
\item\label{t:commute:item2}  $\Clop\circ\Vrdia \simeq \Ksdia\circ\Clop$ and $\Uf\circ\Ksdia \simeq \Vrdia\circ\Uf$.
\item\label{t:commute:item3}  $\Clop\circ\Vr \simeq \Ks\circ\Clop$ and $\Uf\circ\Ks \simeq \Vr\circ\Uf$.
\end{enumerate}
Therefore, the following three diagrams commute up to natural isomorphism.\\
\noindent
\begin{minipage}[b]{0.33333\textwidth}%
\[
	\begin{tikzcd}[column sep=5pc, row sep=4pc]
		\StoneR \arrow[r, shift left=1, "\Clop"] \arrow[d, "\Vrbox"'] & \BAS \arrow[l, shift left=1, "\Uf"]  \arrow[d, "\Ksbox"] \\
		\StoneR \arrow[r, shift left=1, "\Clop"]  & \BAS \arrow[l, shift left=1, "\Uf"]
	\end{tikzcd}
	\]
\end{minipage}%
\begin{minipage}[b]{0.33333\textwidth}
\[
	\begin{tikzcd}[column sep=5pc, row sep=4pc]
		\StoneR \arrow[r, shift left=1, "\Clop"] \arrow[d, "\Vrdia"'] & \BAS \arrow[l, shift left=1, "\Uf"]  \arrow[d, "\Ksdia"] \\
		\StoneR \arrow[r, shift left=1, "\Clop"]  & \BAS \arrow[l, shift left=1, "\Uf"]
	\end{tikzcd}
	\]
\end{minipage}%
\begin{minipage}[b]{0.33333\textwidth}
\[
	\begin{tikzcd}[column sep=5pc, row sep=4pc]
		\StoneR \arrow[r, shift left=1, "\Clop"] \arrow[d, "\Vr"'] & \BAS \arrow[l, shift left=1, "\Uf"]  \arrow[d, "\Ks"] \\
		\StoneR \arrow[r, shift left=1, "\Clop"]  & \BAS \arrow[l, shift left=1, "\Uf"]
	\end{tikzcd}
	\]
\end{minipage}
\end{theorem}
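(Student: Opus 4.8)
The plan is to transport the relational descriptions of $S_{\Rbox}$, $S_{\Rdia}$, and $S_{\Rem}$ obtained in \cref{prop:description-on-morphisms} across the classical isomorphism $\K(\Clop(X)) \cong \Clop(\V(X))$. For a Stone space $X$ the assignments $U \mapsto \Box_U$ and $U \mapsto \Diamond_U$ send clopen subsets of $X$ to clopen subsets of $\V(X)$ and, by \cref{l:Johnstone axioms}, satisfy the defining relations of \cref{def:KB}; hence they induce a boolean homomorphism $\varepsilon_X \colon \K(\Clop(X)) \to \Clop(\V(X))$. It is surjective because the $\Box_U$ and $\Diamond_U$ generate $\Clop(\V(X))$, and injectivity is precisely the content of the classical natural isomorphism $\Clop \circ \V \simeq \K \circ \Clop$. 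The point is that $\varepsilon_X$ is an order isomorphism matching $\Box_a$ with $\Box_U$ and $\Diamond_a$ with $\Diamond_U$ (for $a=U$), so it carries disjunctive (resp.\ conjunctive) normal forms of $\K(\Clop(X))$ bijectively onto those of $\Clop(\V(X))$, respecting all side conditions ($b_{il}\neq 0 \leftrightarrow V_{il}\neq\varnothing$, $d_{jk}\neq 1 \leftrightarrow Z_{jk}\neq Y$, $b_{il}\le a_i \leftrightarrow V_{il}\subseteq U_i$, and $c_j\le d_{jk} \leftrightarrow W_j\subseteq Z_{jk}$).

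Now fix a closed relation $R\colon X\to Y$ between Stone spaces and set $S_R = \Clop(R)$, so that $U \mathrel{S_R} V \iff R[U]\subseteq V$ by \cref{r:Clop and Uf}. Comparing \cref{prop:description-on-morphisms}\eqref{i:dual-box} with \cref{p:Sbox Sdia independent representatives}\eqref{p:Sbox Sdia independent representatives:item1}, the normal-form conditions defining $S_{\Rbox}=\Clop(\Vrbox(R))$ and $(S_R)_\Box = \Ksbox(\Clop(R))$ agree under $\varepsilon$; using \eqref{S4} for $S_{\Rbox}$ to absorb the comparisons coming from $\varepsilon_X$ and $\varepsilon_Y$, this shows that, after identifying $\K(\Clop(X))$ with $\Clop(\V(X))$ along $\varepsilon_X$, the subordination $\Clop(\Vrbox(R))$ becomes $\Ksbox(\Clop(R))$; equivalently, the family $\{\varepsilon_X^{-1}\}$ satisfies the naturality square for $\Clop\circ\Vrbox$ and $\Ksbox\circ\Clop$. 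The $\Diamond$ case is identical, using \eqref{i:dual-diamond} and \eqref{p:Sbox Sdia independent representatives:item2} (alternatively it follows from the $\Box$ case via \cref{l:Vr and dagger}\eqref{l:Vr and dagger:item1}, \cref{l:Ks and dagger}\eqref{l:Ks and dagger:item1}, and the compatibility of $(-)^\dagger$ with $\Clop$). The $\Rem$/$\Sem$ case then follows formally: $\Clop$ restricts to an order-reversing isomorphism $\StoneR(X,Y) \cong \BAS(\Clop(X),\Clop(Y))^{\op}$, so $\Clop(\Rem) = \Clop(\Rbox\cap\Rdia) = S_{\Rbox}\vee S_{\Rdia}$, which $\varepsilon$ carries to $(S_R)_\Box \vee (S_R)_\Diamond = \Sem$ by \cref{def:Sem}.

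It remains to upgrade the naturality squares to natural isomorphisms and to deduce the $\Uf$-halves. Since $\Vr$ and $\Ks$ preserve identities (\cref{t:Vr functor}\eqref{t:Vr functor:item2} and \cref{t:Ks functor}), $\{\varepsilon_X^{-1}\}$ and $\{\varepsilon_X\}$ are mutually inverse natural isomorphisms $\Clop\circ\Vr \simeq \Ks\circ\Clop$ in the ordinary sense, which gives \eqref{t:commute:item3}. For \eqref{t:commute:item1} and \eqref{t:commute:item2} one must use the notion of natural isomorphism of semi-functors: since $\Vrbox$ and $\Vrdia$ fail to preserve identities, the components cannot be the bare $\varepsilon_X^{-1}$, but rather its composite with the idempotent $\Clop(\Vrbox(\id_X))$ (resp.\ $\Clop(\Vrdia(\id_X))$) --- the canonical way to turn a family satisfying only the naturality square into a genuine natural transformation of semi-functors --- and one checks this is an isomorphism with inverse $\varepsilon_X\circ\Ksbox(\id_{\Clop(X)})$ (resp.\ $\varepsilon_X\circ\Ksdia(\id_{\Clop(X)})$), using that $\Clop(\Vrbox(\id_X))$ corresponds to $\Ksbox(\id_{\Clop(X)})$ under $\varepsilon$ (an instance of the naturality square already established). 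Finally, the $\Uf$-halves follow by conjugating with the equivalence of \cref{t:StoneR BAS equiv}: from $\Clop\circ\Vrbox \simeq \Ksbox\circ\Clop$ together with $\Clop\circ\Uf \simeq \id$ and $\Uf\circ\Clop\simeq\id$ one obtains $\Uf\circ\Ksbox \simeq \Uf\circ\Ksbox\circ\Clop\circ\Uf \simeq \Uf\circ\Clop\circ\Vrbox\circ\Uf \simeq \Vrbox\circ\Uf$, and likewise for $\Diamond$ and $\mathrm{EM}$; the three commuting diagrams are then immediate. The bulk of the work is the normal-form matching of the first two paragraphs, which is essentially already carried out because \cref{prop:description-on-morphisms} was deliberately phrased in the language of \cref{def:proximity on KB-12} and \cref{c:Sem description}; the one genuinely delicate point is the semi-functor identity-triangle condition, which is the price paid for $\Vrbox$ and $\Vrdia$ not being functors.
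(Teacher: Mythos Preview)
Your approach matches the paper's: both transport the normal-form descriptions of \cref{prop:description-on-morphisms} across the classical isomorphism $\K(\Clop(X)) \cong \Clop(\V(X))$ to obtain what the paper isolates as a Claim (that $\mathcal{U} \mathrel{\Clop(\Vrbox(R))} \mathcal{W}$ iff $f_X(\mathcal{U}) \mathrel{\Ksbox(\Clop(R))} f_Y(\mathcal{W})$), handle the semi-functor identity-triangle by composing with the idempotent $\Clop(\Vrbox(\id_X))$, and derive the $\Uf$-halves from the equivalence. The one minor difference is that you deduce the $\Rem$ case from the $\Box$ and $\Diamond$ cases via joins (using that $\Clop$ is an order-reversing isomorphism on hom-posets, so $\Clop(\Rbox\cap\Rdia)=S_{\Rbox}\vee S_{\Rdia}$), whereas the paper verifies it directly from \cref{prop:description-on-morphisms}\eqref{i:dual-EM} and \cref{p:Sem independent representatives}; both routes are correct.
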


\begin{proof}
	\eqref{t:commute:item1}.
	We only prove that $\Clop\circ\Vrbox \simeq \Ksbox\circ\Clop$; that $\Uf\circ\Ksbox \simeq \Vrbox\circ\Uf$ follows since $\Clop$ and $\Uf$ are quasi-inverses.
	By \cite[Fact~1]{VV14}, for each $X \in \Stone$, there is	a boolean isomorphism $f_X \colon \Clop(\V(X)) \to \K(\Clop(X))$, which maps, for every clopen $U$ of $X$, the clopen $\Box_U$ of $\V(X)$ to the formal symbol $\Box_U \in \K(\Clop(X))$, and the clopen $\Diamond_U$ 
	of $\V(X)$ to the formal symbol	$\Diamond_U \in \K(\Clop(X))$.\footnote{We have a conflict of notation here since, on the one hand, $\Box_U$ denotes a clopen subset of $\V(X)$, and on the other hand, a formal symbol in $\K(\Clop(X))$. This will come up again in \cref{claim}, but nowhere else.}
	For each $X \in \StoneR$, we define two relations $\alpha_X \colon \Clop(\Vrbox(X)) \to \Ksbox(\Clop(X))$ and $\beta_X \colon \Ksbox(\Clop(X)) \to \Clop(\Vrbox(X))$ by setting 
	\[
	\mathcal{U} \mathrel{\alpha_X} f_X(\mathcal{V}) \iff f_X(\mathcal{U}) \mathrel{\beta_X} \mathcal{V} \iff \mathcal{U} \mathrel{\Clop(\Vrbox(\id_X))} \mathcal{V}
	\]
	for each $\mathcal{U}, \mathcal{V} \in \Clop(\Vrbox(X))$. 
	For the sake of completeness, we give its explicit formulation:
	\begin{align*}
		\mathcal{U} \mathrel{\alpha_X} f_X(\mathcal{V})\ &\iff \forall F \in \mathcal{U}\ \forall G \in \V(X)\, (G \subseteq F \Rightarrow G \in \mathcal{V}),\\
		f_X(\mathcal{U}) \mathrel{\beta_X} \mathcal{V}\ &\iff \forall F \in \mathcal{U}\ \forall G \in \V(X)\, (G \subseteq F \Rightarrow G \in \mathcal{V}).
	\end{align*}
	We prove that $\alpha$ and $\beta$ are natural transformations.
	Since $\Clop(\Vrbox(1_X))$ is a subordination and $f_X$ a boolean isomorphism, $\alpha_X$ and $\beta_X$ are subordinations.
	Let $R \colon X \to Y$ be a morphism in $\StoneR$.
	We prove that the following square commutes.
	\[
		\begin{tikzcd}[column sep=3pc, row sep=4pc]
			\Clop(\Vrbox(X)) \arrow{r}{\alpha_X} \arrow[swap]{d}{\Clop(\Vrbox(R))} & \Ksbox(\Clop(X))\arrow{d}{\Ksbox(\Clop(R))}\\
			\Clop(\Vrbox(Y)) \arrow{r}{\alpha_{Y}}& \Ksbox(\Clop(Y))
		\end{tikzcd}
	\]
	For this we require the following claim.
	\begin{claim}\label{claim}
	For $\mathcal{U} \in \Clop(\Vrbox(X))$ and $\mathcal{W} \in \Clop(\Vrbox(Y))$ we have
	\[
	\mathcal{U} \mathrel{\Clop(\Vrbox(R))} \mathcal{W} \iff f_X(\mathcal{U}) \mathrel{\Ksbox(\Clop(R))} f_Y(\mathcal{W}).
	\]
	\end{claim}
	
	\begin{proof}[Proof of claim]
	Write $\mathcal{U}$ and $\mathcal{W}$
	in disjunctive and conjunctive normal form
		\[
		\mathcal{U} = \bigcup_{i=1}^n (\Box_{U_i} \cap \Diamond_{V_{i 1}} \cap \cdots \cap \Diamond_{V_{i n_i}}), \quad \mathcal{W} = \bigcap_{j=1}^m (\Diamond_{W_j} \cup \Box_{Z_{j 1}} \cup \cdots \cup \Box_{Z_{j m_j}})
		\]
		with clopen sets $\varnothing \neq V_{il} \subseteq U_i$ for each $i,l$, and $W_j \subseteq Z_{jk} \neq Y$ for each $j,k$. Since $\Clop(\Vrbox(R)) = S_{\Rbox}$, \cref{prop:description-on-morphisms}\eqref{i:dual-box} yields 
		\[
		\mathcal{U} \mathrel{\Clop(\Vrbox(R))} \mathcal{W} \iff \forall i,j \, \exists k \le m_j : U_i \mathrel{S_R} Z_{jk}.
		\]
		On the other hand, since $f_X$ is a boolean homomorphism,
		\[
		f_X(\mathcal{U}) = \bigvee_{i=1}^n (\Box_{U_i} \wedge \Diamond_{V_{i 1}} \wedge \cdots \wedge \Diamond_{V_{i n_i}}) \quad \text{and} \quad  f_X(\mathcal{W}) = \bigwedge_{j=1}^m (\Diamond_{W_j} \vee \Box_{Z_{j 1}} \vee \cdots \vee \Box_{Z_{j m_j}}).
		\]
		Since $\Ksbox(\Clop(R)) = \Ksbox(S_R)$, \cref{p:Sbox Sdia independent representatives}\eqref{p:Sbox Sdia independent representatives:item1} yields
		\[
		f_X(\mathcal{U}) \mathrel{\Ksbox(\Clop(R))} f_X(\mathcal{W}) \iff \forall i,j \, \exists k \le m_j : U_i \mathrel{S_R} Z_{jk}.
		\]
		Thus, $\mathcal{U} \mathrel{\Clop(\Vrbox(R))} \mathcal{W}$ iff $f_X(\mathcal{U}) \mathrel{\Ksbox(\Clop(R))} f_Y(\mathcal{W})$.
	\end{proof}
	To prove the commutativity of the square, let $\mathcal{U} \in \Clop(\Vrbox(X))$ and $\mathcal{W} \in \Clop(\Vrbox(Y))$.
	On the one hand,
	\begin{align*}
		& \mathcal{U} \mathrel{(\alpha_Y \circ \Clop(\Vrbox(R)))} f_Y(\mathcal{W}) \\
		& \iff \exists \mathcal{Z} \in \Clop(\Vrbox(Y)) : \mathcal{U} \mathrel{\Clop(\Vrbox(R))} \mathcal{Z} \mathrel{\alpha_Y} f_Y(\mathcal{W})\\
		& \iff \exists \mathcal{Z} \in \Clop(\Vrbox(Y)) : \mathcal{U} \mathrel{\Clop(\Vrbox(R))} \mathcal{Z} \mathrel{\Clop(\Vrbox(\id_X))} \mathcal{W}\\
		& \iff \mathcal{U} \mathrel{(\Clop(\Vrbox(\id_X)) \circ \Clop(\Vrbox(R)))} \mathcal{W}\\
		& \iff \mathcal{U} \mathrel{\Clop(\Vrbox(\id_X \circ R))} \mathcal{W}\\
		& \iff \mathcal{U} \mathrel{\Clop(\Vrbox(R))} \mathcal{W}.
	\end{align*}
	On the other hand,
	\begin{align*}
		&\mathcal{U} \mathrel{(\Ksbox(\Clop(R)) \circ \alpha_X)} f_Y(\mathcal{W})\\
		& \iff \exists \mathcal{V} \in \Clop(\Vrbox(X)) : \mathcal{U} \mathrel{\alpha_X} f_X(\mathcal{V}) \mathrel{\Ksbox(\Clop(R))} f_Y(\mathcal{W})\\
		& \iff \exists \mathcal{V} \in \Clop(\Vrbox(X)) : \mathcal{U} \mathrel{\alpha_X} f_X(\mathcal{V}),\, \mathcal{V} \mathrel{\Clop(\Vrbox(R))} \mathcal{W} & \text{by \cref{claim}}\\
		& \iff \exists \mathcal{V} \in \Clop(\Vrbox(X)) : \mathcal{U} \mathrel{\Clop(\Vrbox(\id_X))} \mathcal{V} 
		\mathrel{\Clop(\Vrbox(R))} \mathcal{W}\\
		& \iff \mathcal{U} \mathrel{(\Clop(\Vrbox(R)) \circ \Clop(\Vrbox(\id_X)))} \mathcal{W}\\
		& \iff \mathcal{U} \mathrel{\Clop(\Vrbox(R \circ \id_X))} \mathcal{W}\\		
		& \iff \mathcal{U} \mathrel{\Clop(\Vrbox(R))} \mathcal{W}.
	\end{align*}
	Thus, $\alpha_Y \circ \Clop(\Vrbox(R)) = \Ksbox(\Clop(R)) \circ \alpha_X$.
	Moreover, when $X = Y$ and $R = \id_X$, the calculation in the second to last display above and the definition of $\alpha_X$ give
	\begin{align*}
		\mathcal{U} \mathrel{(\alpha_X \circ \Clop(\Vrbox(\id_X)))} f_X(\mathcal{V}) & \iff \mathcal{U} \mathrel{\Clop(\Vrbox(\id_X))} \mathcal{V} \iff \mathcal{U} \mathrel{\alpha_X} f_X(\mathcal{V}).
	\end{align*}
	Therefore, the following triangle commutes for every $X \in \StoneR$.
	\[
		\begin{tikzcd}[column sep=5pc, row sep=4pc]
			\Clop(\Vrbox(X)) \arrow[swap]{d}{\Clop(\Vrbox(\id_X))}\arrow{rd}{\alpha_X}\\
			\Clop(\Vrbox(X))\arrow[swap]{r}{\alpha_X}  & \Ksbox(\Clop(X))
		\end{tikzcd}
	\]
	Thus, $\alpha$ is a natural transformation.
	That $\beta$ is a natural transformation is proved similarly.
	
	Let $X \in \StoneR$.
	We prove that $\beta_X \circ \alpha_X = \Clop(\Vrbox(\id_X))$.
	For $\mathcal{U}, \mathcal{V} \in \Clop(\Vrbox(X))$ we have
	\begin{align*}
		\mathcal{U} \mathrel{(\beta_X \circ \alpha_X)} \mathcal{V} & \iff \exists \mathcal{W} \in \Clop(\Vrbox(X)) : \mathcal{U} \mathrel{\alpha_X} f_{X}(\mathcal{W}) \mathrel{\beta_X} \mathcal{V}\\
		& \iff \exists \mathcal{W} \in \Clop(\Vrbox(X)) : \mathcal{U} \mathrel{\Clop(\Vrbox(\id_X))} \mathcal{W} \mathrel{\Clop(\Vrbox(\id_X))} \mathcal{V}\\
		& \iff \mathcal{U} \mathrel{\Clop(\Vrbox(\id_X)) \circ \Clop(\Vrbox(\id_X))} \mathcal{V}\\
		& \iff \mathcal{U} \mathrel{\Clop(\Vrbox(\id_X \circ \id_X))} \mathcal{V}\\
		& \iff \mathcal{U} \mathrel{\Clop(\Vrbox(\id_X))} \mathcal{V}.
	\end{align*}
	This proves that $\beta_X \circ \alpha_X = \Clop(\Vrbox(\id_X))$.
	We next prove that $\alpha_X \circ \beta_X = \Ksbox(\Clop(\id_X))$.
	For $\mathcal{U}, \mathcal{V} \in \Clop(\Vrbox(X))$ we have
	\begin{align*}
		& f_X(\mathcal{U}) \mathrel{(\alpha_X \circ \beta_X)} f_X(\mathcal{V}) \\
		& \iff \exists \mathcal{W} \in \Clop(\Vrbox(X)) : f_X(\mathcal{U}) \mathrel{\beta_X} \mathcal{W} \mathrel{\alpha_X} f_X(\mathcal{V})\\
		& \iff \exists \mathcal{W} \in \Clop(\Vrbox(X)) : \mathcal{U} \mathrel{\Clop(\Vrbox(\id_X))} \mathcal{W} \mathrel{\Clop(\Vrbox(\id_X))} f_X(\mathcal{V})\\
		& \iff \mathcal{U} \mathrel{\Clop(\Vrbox(\id_X \circ \id_X))} \mathcal{V}\\
		& \iff \mathcal{U} \mathrel{\Clop(\Vrbox(\id_X))} \mathcal{V}\\
		& \iff f_X(\mathcal{U}) \mathrel{\Ksbox(\Clop(\id_X))} f_X(\mathcal{V}) & \text{by \cref{claim}.}
	\end{align*}
	Therefore, $\alpha_X \circ \beta_X = \Ksbox(\Clop(\id_X))$. Thus,	$\alpha$ and $\beta$ yield a natural isomorphism between $\Clop\circ\Vrbox$ and $\Ksbox\circ\Clop$.

	\eqref{t:commute:item2} and \eqref{t:commute:item3} are proved similarly.
	For the sake of completeness, we describe the natural isomorphisms.
	For \eqref{t:commute:item2}, the natural isomorphism is given by the natural transformations $\alpha \colon \Clop \circ \Vrdia \to \Ksdia \circ \Clop$ and $\beta \colon \Ksdia \circ \Clop \to \Clop \circ \Vrdia$ defined by 
	\begin{align*}
	\mathcal{U} \mathrel{\alpha_X} f_X(\mathcal{V}) &\iff f_X(\mathcal{U}) \mathrel{\beta_X} \mathcal{V} \iff \mathcal{U} \mathrel{\Clop(\Vrdia(\id_X))} \mathcal{V}\\
	&\iff \forall F \in \mathcal{U}\ \forall G \in \V(X)\, (G \subseteq F \Rightarrow G \in \mathcal{V}).
	\end{align*}
	For \eqref{t:commute:item3}, the natural transformations $\alpha \colon \Clop \circ \Vr \to \Ks \circ \Clop$ and $\beta \colon \Ks \circ \Clop \to \Clop \circ \Vr$ witnessing the natural isomorphism are defined by 
	\[
		\mathcal{U} \mathrel{\alpha_X} f_X(\mathcal{V}) \iff f_X(\mathcal{U}) \mathrel{\beta_X} \mathcal{V} \iff \mathcal{U} \mathrel{\Clop(\Vr(\id_X))} \mathcal{V} \iff \mathcal{U} \subseteq \mathcal{V}.\qedhere
	\]
\end{proof}

\section{Extending \texorpdfstring{$\Ks$}{K{\textasciicircum}S} to \texorpdfstring{$\SubSfive$}{SubS5{\textasciicircum}S}, \texorpdfstring{$\devS$}{DeV{\textasciicircum}S}, and \texorpdfstring{$\KRFrmP$}{KRFrm{\textasciicircum}P}} \label{sec: extending K to L}

In this section we lift the endofunctor $\Ks$ on $\BAS$ to an endofunctor on $\SubSfive$. We then use MacNeille completions of $\sf S5$-subordination algebras developed in \cite{ABC22b} to obtain an endofunctor $\Ls$ on $\devS$, and prove that $\Ls$ is dual to the Vietoris endofunctor $\Vr$ on $\KHausR$. We conclude by utilizing ideal completions of $\sf S5$-subordination algebras to obtain an endofunctor on $\KRFrmP$, thus generalizing Johnstone's construction of the Vietoris endofunctor on $\KRFrm$. 

We recall (see \cite[Rem.~3.11]{ABC22a}) that the equivalence between $\SubSfive$ and $\StoneER$ is obtained by lifting the functors $\Uf$ and $\Clop$ establishing the equivalence between $\BAS$ and $\StoneR$. We next lift $\Vr \colon \StoneR \to \StoneR$ to an endofunctor on $\StoneER$ and $\Ks \colon \BAS \to \BAS$ to an endofunctor on $\SubSfive$. We slightly abuse notation and denote the lift of a functor by the same letter.

\begin{definition}\label{def:lifts of Vr and Ks}
\hfill
\begin{enumerate}
\item Define $\Vr \colon \StoneER \to \StoneER$ by sending $(X,E)$ to $(\Vr(X),\Vr(E))$ and a morphism $R \colon (X_1,E_1) \to (X_2,E_2)$ to $\Vr(R)$.
\item Define $\Ks \colon \SubSfive \to \SubSfive$ by sending $(B,S)$ to $(\Ks(B),\Ks(S))$ and a morphism $T \colon (B_1,S_1) \to (B_2,S_2)$ to $\Ks(T)$.
\end{enumerate}
\end{definition}

To show that $\Vr$ and $\Ks$ are well defined, we require the following two lemmas.

\begin{lemma} \label{lem: VR(E)}
\hfill\begin{enumerate}[label=\normalfont(\arabic*), ref = \arabic*]
\item\label{lem: VR(E):item1} If $(X,E) \in \StoneER$, then $\Vr(E)$ is a closed equivalence relation on $\Vr(X)$ and $\Vr(X)/\Vr(E) \cong \Vr(X/E)$.
\item\label{lem: VR(E):item2} If $R \colon (X_1,E_1) \to (X_2,E_2)$ is a morphism in $\StoneER$, then $\Vr(\Q(R))=\Q(\Vr(R))$.
\end{enumerate}
\end{lemma}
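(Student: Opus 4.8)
The plan is to treat the two items separately, using throughout that $\Vr$ is an endofunctor on $\KHausR$ that restricts to $\StoneR$, preserves inclusions of relations, and commutes with the dagger (\cref{t:Vr functor,l:Vr and dagger}), together with the fact that $E \circ E = E$ for any equivalence relation $E$.

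For \eqref{lem: VR(E):item1}, I would first verify that $\Vr(E)$ is a closed equivalence relation on $\Vr(X)$. Closedness is immediate from \cref{lem:R R1 R2 closed}. Reflexivity: since $E$ is reflexive, $F \subseteq E[F]$ and $F \subseteq E^{-1}[F]$ for every closed $F$, so $F \mathrel{\Vr(E)} F$; equivalently, $\Delta_X \subseteq E$ and $\Vr$ preserves identities and inclusions, whence $\Delta_{\Vr(X)} = \Vr(\Delta_X) \subseteq \Vr(E)$. Symmetry follows from $E = E^\dagger$ and \cref{l:Vr and dagger}\eqref{l:Vr and dagger:item2}: $\Vr(E)^\dagger = \Vr(E^\dagger) = \Vr(E)$. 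Transitivity follows from $E \circ E = E$ and functoriality: $\Vr(E) \circ \Vr(E) = \Vr(E \circ E) = \Vr(E)$. For the homeomorphism, I would let $\pi \colon X \to X/E$ be the quotient map, so that $\Vr(\pi) \colon \Vr(X) \to \Vr(X/E)$ is the continuous surjection $F \mapsto \pi[F]$ between compact Hausdorff spaces, hence a closed map and a quotient map. The crux of this item is the kernel computation: using $\pi^{-1}[\pi[F]] = E[F]$ and $E \circ E = E$, one gets $\pi[F] = \pi[G] \iff E[F] = E[G] \iff (F \subseteq E^{-1}[G] \text{ and } G \subseteq E[F]) \iff F \mathrel{\Vr(E)} G$, so the kernel of $\Vr(\pi)$ is exactly $\Vr(E)$, and $\Vr(\pi)$ descends to a homeomorphism $\rho \colon \Vr(X)/\Vr(E) \to \Vr(X/E)$.

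For \eqref{lem: VR(E):item2}, I would first note that $\Vr(R)$ is a morphism in $\StoneER$: it is compatible because $R$ is and $\Vr$ is a functor, giving $\Vr(R) \circ \Vr(E_1) = \Vr(R \circ E_1) = \Vr(R) = \Vr(E_2 \circ R) = \Vr(E_2) \circ \Vr(R)$. Write $p_i \colon \Vr(X_i) \to \Vr(X_i)/\Vr(E_i)$ for the quotient maps and $\rho_i$ for the homeomorphisms from \eqref{lem: VR(E):item1}, so that $\Vr(\pi_i) = \rho_i \circ p_i$ by construction; since $\rho_i$ is a homeomorphism, $\rho_i^\dagger = \rho_i^{-1}$, hence $p_i^\dagger = \Vr(\pi_i)^\dagger \circ \rho_i$. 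On the one hand, functoriality of $\Vr$ on $\KHausR$ and \cref{l:Vr and dagger}\eqref{l:Vr and dagger:item2} give $\Vr(\Q(R)) = \Vr(\pi_2 \circ R \circ \pi_1^\dagger) = \Vr(\pi_2) \circ \Vr(R) \circ \Vr(\pi_1)^\dagger$; on the other hand,
\[
\Q(\Vr(R)) = p_2 \circ \Vr(R) \circ p_1^\dagger = \rho_2^{-1} \circ \Vr(\pi_2) \circ \Vr(R) \circ \Vr(\pi_1)^\dagger \circ \rho_1 = \rho_2^{-1} \circ \Vr(\Q(R)) \circ \rho_1.
\]
Thus $\rho_2 \circ \Q(\Vr(R)) \circ \rho_1^{-1} = \Vr(\Q(R))$, i.e.\ the two coincide under the identification furnished by \eqref{lem: VR(E):item1}.

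The only genuinely non-formal step is the kernel identification $\ker \Vr(\pi) = \Vr(E)$ in \eqref{lem: VR(E):item1}; everything else is bookkeeping with functoriality, inclusions, and the dagger. A secondary care point is keeping the homeomorphisms $\rho_i$ straight in \eqref{lem: VR(E):item2}, but the relation $\Vr(\pi_i) = \rho_i \circ p_i$ makes this routine.
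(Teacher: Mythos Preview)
Your proposal is correct and follows essentially the same route as the paper: both show $\Vr(E)$ is a closed equivalence relation via closedness plus functoriality and dagger-commutation, identify $\Vr(E)$ with the kernel of $\Vr(\pi)$ to get the homeomorphism, and then derive \eqref{lem: VR(E):item2} from functoriality of $\Vr$ together with $\Vr(\pi_i)^\dagger = \Vr(\pi_i^\dagger)$. The only difference is bookkeeping: in \eqref{lem: VR(E):item2} the paper silently identifies $\Vr(X_i)/\Vr(E_i)$ with $\Vr(X_i/E_i)$ and writes $\Q(\Vr(R)) = \Vr(\pi_2)\circ\Vr(R)\circ\Vr(\pi_1)^\dagger$ directly, whereas you track the homeomorphisms $\rho_i$ explicitly and also verify compatibility of $\Vr(R)$; your version is more careful, the paper's more compressed, but the argument is the same.
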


\begin{proof}
\eqref{lem: VR(E):item1}. 
Since $E=E^\dagger$, it follows from \cref{lem:R R1 R2 closed} that $\Vr(E)$ is a closed equivalence relation. Therefore, if $\pi\colon X\to X/E$ is the quotient map, then for $F,G \in \Vr(X)$ we have 
\[
F \mathrel{\Vr(E)} G \iff E[F]=E[G] \iff \pi^{-1}\pi[F]=\pi^{-1}\pi[G] \iff \pi[F]=\pi[G].
\]
Thus, $\Vr(E)$ is the kernel of $\V(\pi)$, and hence
$\Vr(X)/\Vr(E)$ is homeomorphic to $\Vr(X/E)$.

\eqref{lem: VR(E):item2}.
Since $\Vr$ is a functor and commutes with the dagger, we have
\[
\Q(\Vr(R))=\Vr(\pi) \circ \Vr(R) \circ \Vr(\pi)^\dagger=\Vr(\pi \circ R \circ \pi^\dagger)=\Vr(\Q(R)). \qedhere
\]
\end{proof}

\begin{lemma}\label{th:K-preserves-morphisms}
	Let $S$ be an $\mathsf{S5}$-subordination on a boolean algebra $B$.
	\begin{enumerate}[label=\normalfont(\arabic*), ref = \arabic*]
		\item \label{i:K-S4}
		$\Sbox$ and $\Sdia$ satisfy \emph{\eqref{S5}} and \emph{\eqref{S7}}.
		\item \label{i:K-S5}
		$\mathcal{S}$ is an $\mathsf{S5}$-subordination.
	\end{enumerate}
\end{lemma}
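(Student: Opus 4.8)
The plan is to prove \eqref{i:K-S4} first and then deduce \eqref{i:K-S5} from it. Two elementary reformulations will carry most of the weight: for a subordination $T \colon C \to C$, axiom \eqref{S5} is equivalent to $T \subseteq {\le_C}$ (i.e.\ $T$ lies below the identity morphism of $\BAS$), and axiom \eqref{S7} is equivalent to $T \subseteq T \circ T$, where $a \mathrel{(T \circ T)} c$ abbreviates ``there is $b$ with $a \mathrel{T} b \mathrel{T} c$''. I will also use that $\Ksbox$ and $\Ksdia$ are \emph{monotone}: if $S \subseteq S'$ then $\Sbox \subseteq S'_\Box$ and $\Sdia \subseteq S'_\Diamond$; this is immediate from \cref{def:proximity on KB-12}, since any pair of normal forms witnessing $x \Sboxrel y$ also witnesses $x \mathrel{S'_\Box} y$. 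Monotonicity of $\Ks$ has already been recorded in the remark after \cref{l:Ks and dagger}.

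For \eqref{i:K-S4}: $\Sbox$ and $\Sdia$ are subordinations by \cref{th:Sbox Sdia subordinations}, so \eqref{S1}--\eqref{S4} hold. For \eqref{S5}, since $S$ satisfies \eqref{S5} we have $S \subseteq {\le_B}$, hence $\Sbox \subseteq (\le_B)_\Box$ by monotonicity; and $(\le_B)_\Box \subseteq {\le_{\K(B)}}$, because comparing \cref{p:Sbox Sdia independent representatives}\eqref{p:Sbox Sdia independent representatives:item1} (for the subordination ${\le_B}$) with \cref{lem:le in KB} shows the condition defining $x \mathrel{(\le_B)_\Box} y$ is a special case of the one characterising $x \le y$. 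Thus $\Sbox \subseteq {\le_{\K(B)}}$, i.e.\ \eqref{S5} for $\Sbox$; the argument for $\Sdia$ is the same via \cref{p:Sbox Sdia independent representatives}\eqref{p:Sbox Sdia independent representatives:item2}. For \eqref{S7}, I would use that $\Ksbox$ is a monotone semi-functor (\cref{t:Ksbox Ksdia Ks (semi-)functors}): from $S \subseteq S \circ S$ it follows that
\[
\Sbox = \Ksbox(S) \subseteq \Ksbox(S \circ S) = \Ksbox(S) \circ \Ksbox(S) = \Sbox \circ \Sbox,
\]
which is \eqref{S7} for $\Sbox$, and the identical computation with $\Ksdia$ handles $\Sdia$.

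For \eqref{i:K-S5}: \cref{c:Sem description} gives that $\Sem$ is a subordination, so \eqref{S1}--\eqref{S4} hold. Axiom \eqref{S5} for $\Sem$ follows from \eqref{i:K-S4}: since $\Sbox, \Sdia \subseteq {\le_{\K(B)}}$ and ${\le_{\K(B)}}$ is a subordination (the identity on $\K(B)$), it is an upper bound of $\{\Sbox,\Sdia\}$, so by \cref{def:Sem} $\Sem = \Sbox \vee \Sdia \subseteq {\le_{\K(B)}}$. Axiom \eqref{S7} for $\Sem$ is obtained exactly as for $\Sbox$, now using that $\Ks$ is a monotone functor (\cref{t:Ks functor}): $S \subseteq S \circ S$ gives $\Sem = \Ks(S) \subseteq \Ks(S \circ S) = \Ks(S) \circ \Ks(S) = \Sem \circ \Sem$. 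Finally, for \eqref{S6} I would note that \eqref{S6} for $S$ is equivalent to $S = S^\dagger$ (it asserts $S \subseteq S^\dagger$, and then $S^\dagger \subseteq S^{\dagger\dagger} = S$ since taking daggers is monotone and involutive on hom-sets); combining this with \cref{l:Ks and dagger}\eqref{l:Ks and dagger:item2} yields $\Sem^\dagger = \Ks(S)^\dagger = \Ks(S^\dagger) = \Ks(S) = \Sem$, and $\Sem = \Sem^\dagger$ unwinds to precisely \eqref{S6} for $\Sem$.

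I expect the only real friction to lie in \eqref{i:K-S4}: setting up the reformulations of \eqref{S5} and \eqref{S7} cleanly, checking monotonicity of $\Ksbox$ and $\Ksdia$, and verifying the inclusion $(\le_B)_\Box \subseteq {\le_{\K(B)}}$, which is the single place \cref{lem:le in KB} genuinely enters. A more hands-on but longer route for \eqref{S7} of $\Sbox$ is available: given witnessing normal forms $x = \bigvee_i (\Box_{a_i} \wedge \Diamond_{b_{i 1}} \wedge \cdots \wedge \Diamond_{b_{i n_i}})$ and $y = \bigwedge_j (\Diamond_{c_j} \vee \Box_{d_{j 1}} \vee \cdots \vee \Box_{d_{j m_j}})$ of $x \Sboxrel y$, use \eqref{S7} of $S$ to pick $e_{ij}$ with $a_i \S e_{ij} \S d_{j k(i,j)}$, set $e_i = \bigwedge_j e_{ij}$ and $z = \bigvee_i \Box_{e_i}$, and verify $x \Sboxrel z$ and $z \Sboxrel y$ through \cref{p:Sbox Sdia independent representatives} (with the trivial case $y = 1$ handled separately); but the semi-functoriality argument above makes this bookkeeping unnecessary.
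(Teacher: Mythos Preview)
Your proof is correct and follows essentially the same strategy as the paper's: reformulate \eqref{S5} as $T \subseteq {\le}$ and \eqref{S7} as $T \subseteq T \circ T$, then exploit monotonicity and semi-functoriality of $\Ksbox$, $\Ksdia$, $\Ks$ together with \cref{lem:le in KB}, and handle \eqref{S6} via \cref{l:Ks and dagger}\eqref{l:Ks and dagger:item2}. The one notable deviation is in \eqref{S7} for $\Sem$: the paper proves the general fact that the join of two subordinations satisfying \eqref{S5} and \eqref{S7} again satisfies \eqref{S7} (via $S \vee T \subseteq (S \circ S) \vee (T \circ T) \subseteq (S \vee T) \circ (S \vee T)$), whereas you bypass the join entirely and apply functoriality and monotonicity of $\Ks$ directly to $S \subseteq S \circ S$. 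Your route is slightly slicker here, since it avoids reasoning about how composition interacts with the join in $\BAS(\K(B),\K(B))$; the paper's route, on the other hand, isolates a reusable lattice-theoretic fact about joins of subordinations.
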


\begin{proof}
\eqref{i:K-S4}.
That $\Sbox$ and $\Sdia$ satisfy \eqref{S5} is an immediate consequence of \cref{def:proximity on KB-12,lem:le in KB}. 
We prove that $\Sbox$ satisfies \eqref{S7}.
Since $\Ksbox$ preserves inclusion of subordinations and composition, $\Sbox \circ \Sbox = \Ksbox(S) \circ \Ksbox(S) = \Ksbox(S \circ S) \subseteq \Ksbox(S) = \Sbox$. Thus, $\Sbox$ satisfies \eqref{S7}. That $\Sdia$ satisfies \eqref{S7} is proved similarly.

\eqref{i:K-S5}.
By \eqref{i:K-S4}, to show that $\Sem$ satisfies \eqref{S5} and \eqref{S7}, it is sufficient to prove that if $S,T \in \BAS(B,B)$ satisfy \eqref{S5} and \eqref{S7}, then so does their join. 
Since \eqref{S5} holds for $S,T$, we have that $S$ and $T$ are contained in $\le$, where $\le$ is the partial order on $B$. Thus, $S \vee T$ is contained in $\le$, and so $S \vee T$ satisfies \eqref{S5}. Since $S,T$ satisfy \eqref{S7}, $S \subseteq S \circ S$ and $T \subseteq T \circ T$. Thus, 
\[
S \vee T \subseteq (S \circ S) \vee (T \circ T) \subseteq (S \vee T) \circ (S \vee T),
\]
where the last inclusion follows from the fact that both $S \circ S$ and $T \circ T$ are contained in $(S \vee T) \circ (S \vee T)$ because $S,T \subseteq S \vee T$. Therefore, $S \vee T$ satisfies \eqref{S7}.

It remains to show that $\mathcal{S}$ satisfies \eqref{S6}.
We have that $S^\dagger=S$ because $S$ satisfies \eqref{S6}. By \Cref{l:Ks and dagger}, $\Sem^\dagger=\Ks(S)^\dagger=\Ks(S^\dagger)=\Ks(S)=\Sem$. Thus, $\Sem$ satisfies \eqref{S6}.
\end{proof}

\begin{theorem} \label{thm: L and K(Vr)}
$\Vr \colon \StoneER \to \StoneER$ and $\Ks \colon \SubSfive \to \SubSfive$ are well defined functors and the following diagram commutes up to natural isomorphism.
\[
\begin{tikzcd}[column sep=5pc, row sep=4pc]
	\KHausR \arrow[d, swap, "\Vr"] \arrow[r, shift left=1, "\G"] & \StoneER \arrow[r, shift left=1, "\Clop"] \arrow[d, "\Vr"] \arrow[l, shift left=1, "\Q"] & \SubSfive \arrow[l, shift left=1, "\Uf"] \arrow[d, "\Ks"]\\
	\KHausR \arrow[r, shift left=1, "\G"] & \StoneER \arrow[r, shift left=1, "\Clop"] \arrow[l, shift left=1, "\Q"] & \SubSfive \arrow[l, shift left=1, "\Uf"]
\end{tikzcd}
\]
\end{theorem}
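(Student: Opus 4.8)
The plan is to assemble the statement from \cref{lem: VR(E),th:K-preserves-morphisms} and \cref{t:commute}, with the remaining work being bookkeeping about the lifted categories. First I would check well-definedness. On objects, $\Vr(X,E)=(\Vr(X),\Vr(E))$ lies in $\StoneER$ by \cref{lem: VR(E)}\eqref{lem: VR(E):item1}, and $\Ks(B,S)=(\Ks(B),\Sem)$ lies in $\SubSfive$ by \cref{th:K-preserves-morphisms}\eqref{i:K-S5}. On morphisms, if $R\colon(X_1,E_1)\to(X_2,E_2)$ is compatible, applying the endofunctor $\Vr$ on $\KHausR$ (\cref{t:Vr functor}) to the compatibility equations $R\circ E_1=R=E_2\circ R$ gives $\Vr(R)\circ\Vr(E_1)=\Vr(R)=\Vr(E_2)\circ\Vr(R)$, so $\Vr(R)$ is compatible; applying the endofunctor $\Ks$ on $\BAS$ (\cref{t:Ks functor}) to the analogous equations shows $\Ks$ sends compatible subordinations to compatible subordinations. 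Since the identity on $(X,E)$ in $\StoneER$ is $E$ and the identity on $(B,S)$ in $\SubSfive$ is $S$, with $\Vr(E)$ and $\Ks(S)=\Sem$ serving as identities on the respective image objects, preservation of identities and of composition is inherited from $\Vr$ on $\KHausR$ and $\Ks$ on $\BAS$; hence both $\Vr\colon\StoneER\to\StoneER$ and $\Ks\colon\SubSfive\to\SubSfive$ are well-defined functors.

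Next I would treat the left square. The goal is $\Q\circ\Vr\simeq\Vr\circ\Q$ as functors $\StoneER\to\KHausR$: the components are the homeomorphisms $\Vr(X)/\Vr(E)\cong\Vr(X/E)$ of \cref{lem: VR(E)}\eqref{lem: VR(E):item1}, each induced by $\V(\pi)$ for the quotient map $\pi\colon X\to X/E$, and naturality is exactly the equality $\Q(\Vr(R))=\Vr(\Q(R))$ of \cref{lem: VR(E)}\eqref{lem: VR(E):item2}. Since $\G$ and $\Q$ are quasi-inverse equivalences, transporting this natural isomorphism along them yields $\G\circ\Vr\simeq\Vr\circ\G$, which is the commutativity of the left square.

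For the right square the plan is to lift the natural isomorphism of \cref{t:commute}\eqref{t:commute:item3}. The point is that the lifted functors $\Clop,\Uf,\Vr,\Ks$ act on underlying morphisms exactly as their unlifted counterparts, while on objects $\Clop(\Vr(X,E))=(\Clop(\V(X)),\Clop(\Vr(E)))$ and $\Ks(\Clop(X,E))=(\K(\Clop(X)),\Ks(\Clop(E)))$, where $\Clop(\Vr(E))$ and $\Ks(\Clop(E))$ are the respective identity morphisms. I would define, for $(X,E)\in\StoneER$, the components $\alpha_{(X,E)}$ and $\beta_{(X,E)}$ by the recipe of \cref{t:commute} with the $\StoneER$-identity $E$ in place of $\id_X$, i.e.\ $\mathcal{U}\mathrel{\alpha_{(X,E)}}f_X(\mathcal{V})\iff f_X(\mathcal{U})\mathrel{\beta_{(X,E)}}\mathcal{V}\iff\mathcal{U}\mathrel{\Clop(\Vr(E))}\mathcal{V}$. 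Since \cref{claim} in the proof of \cref{t:commute} is established for an arbitrary morphism $R$ of $\StoneR$, applying it with $R=E$ shows that $f_X$ carries $\Clop(\Vr(E))$ onto $\Ks(\Clop(E))$; this identification is precisely what is needed to conclude that $\alpha_{(X,E)}$ and $\beta_{(X,E)}$ are compatible subordinations (hence morphisms in $\SubSfive$), that they are mutually inverse, and that they are natural in $(X,E)$ — the verifications are word for word those in the proof of \cref{t:commute}\eqref{t:commute:item3}, since composition in $\StoneER$ and $\SubSfive$ is relation composition. Transporting along the quasi-inverse pair $\Clop,\Uf$ then gives $\Uf\circ\Ks\simeq\Vr\circ\Uf$. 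The substantive content has already been extracted into \cref{lem: VR(E),th:K-preserves-morphisms} and \cref{t:commute}, so this proof is largely assembly; the step I expect to require the most care is exactly this last one — checking that the natural isomorphism of \cref{t:commute}\eqref{t:commute:item3} is compatible with the $\mathsf{S5}$-subordination structure on objects — which, as indicated, I expect to reduce cleanly to \cref{claim} applied to the closed equivalence relation $E$.
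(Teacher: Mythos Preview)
Your proposal is correct and follows essentially the same approach as the paper: well-definedness from \cref{lem: VR(E)} and \cref{th:K-preserves-morphisms}, the left square from \cref{lem: VR(E)} plus the $\Q$/$\G$ equivalence, and the right square by lifting \cref{t:commute}\eqref{t:commute:item3}. The paper compresses the last two steps into single sentences (``$\Vr\circ\Q=\Q\circ\Vr$'' and ``because $\Vr$ and $\Ks$ are defined componentwise''), whereas you spell out the compatibility of morphisms and the replacement of $\id_X$ by $E$ in the components of the natural isomorphism; this extra care is warranted and your reduction to \cref{claim} with $R=E$ is exactly the right move.
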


\begin{proof}
That $\Vr$ is well defined follows from \cref{lem: VR(E)}\eqref{lem: VR(E):item1}, and that $\Ks$ is well defined from \cref{th:K-preserves-morphisms}\eqref{i:K-S5}.
By \cref{lem: VR(E)}, $\Vr \circ \Q = \Q \circ \Vr$, and since $\G$ is a quasi-inverse of $\Q$ (see \cref{rem:G and Q}), the left square commutes.
Finally, because $\Vr$ and $\Ks$ are defined componentwise (see \cref{def:lifts of Vr and Ks}), \cref{t:commute}\eqref{t:commute:item3} yields commutativity of the right square.
\end{proof}

To obtain an endofunctor on $\devS$, we recall the definition of the MacNeille completion of an $\sf S5$-subordination algebra. For a subset $E$ of a poset, we write $E^u$ for the set of upper bounds and $E^\ell$ for the set of lower bounds of $E$.

\begin{definition}
Let $(B,S)$ be an $\sf S5$-subordination algebra. 
\begin{enumerate}
\item\cite[Def.~3.1]{ABC22b} An ideal $I$ of $B$ is an {\em S-ideal} (\emph{subordination ideal}) if for each $a\in I$ there is $b\in I$ such that $a \mathrel{S} b$. 
\item\cite[Thm.~4.8]{ABC22b} An S-ideal $I$ is {\em normal} if $I = S^{-1} [(S[I^u])^\ell]$.
\end{enumerate}
\end{definition}

For $(B,S)\in\SubSfive$, let $\mathcal{NI}(B,S)$ be the set of normal S-ideals of $(B,S)$ ordered by inclusion. Following~\cite[Def.~4.6]{ABC22b}, we call $\mathcal{NI}(B,S)$ the {\em MacNeille completion} of $(B,S)$.

\begin{theorem} \cite[Prop.~4.4, Thm.~4.12]{ABC22b}
If $(B,S)$ is an $\sf S5$-subordination algebra, then $\mathcal{NI}(B,S)$ is a de Vries algebra. Moreover, the assignment $(B,S) \mapsto \mathcal{NI}(B,S)$ extends to a functor $\M\colon\SubSfive\to\devS$, which is an equivalence whose quasi-inverse is the inclusion $\Delta \colon \devS \to \SubSfive$.
\end{theorem}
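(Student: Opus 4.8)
As indicated, the full argument is in \cite{ABC22b}; the plan is as follows. Two things must be shown: that $\mathcal{NI}(B,S)$ is a de Vries algebra, and that $(B,S)\mapsto\mathcal{NI}(B,S)$ extends to an equivalence $\M$ with quasi-inverse the inclusion $\Delta$.

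For the first, the plan is to realize the normal S-ideals as the fixed points of a closure operator. Writing $N(I)\coloneqq S^{-1}[(S[I^u])^\ell]$, one checks that $N$ is a closure operator on the poset of S-ideals — extensiveness uses \eqref{S5} and idempotence uses \eqref{S7} — whose fixed points are exactly the normal S-ideals, so that $\mathcal{NI}(B,S)$ ordered by inclusion is a complete lattice with meets given by intersection and joins by $N$ applied to the S-ideal generated by the union. Next one verifies that this complete lattice is a Boolean algebra: $\{0\}$ is the bottom, $B$ the top, the complement of $I$ is $N(\{b\mid b\le\neg a\text{ for some }a\in I^u\})$, with $I\wedge\neg I=\{0\}$ and $I\vee\neg I=B$ following from \eqref{S6} and roundness of S-ideals, and distributivity follows because $a\mapsto S^{-1}[{\downarrow}a]$ embeds $B$ join- and meet-densely into $\mathcal{NI}(B,S)$. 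Finally one equips $\mathcal{NI}(B,S)$ with a proximity $\prec$ — a natural candidate being $I\prec J$ iff there are $a,b\in B$ with $I\subseteq{\downarrow}a$, $a\mathrel{S}b$ and $b\in J$, with the precise formulation taken from \cite{ABC22b} — and checks \eqref{S1}--\eqref{S8}: \eqref{S1}--\eqref{S5} transfer routinely from $S$, \eqref{S6} uses the description of complements together with \eqref{S6} for $S$, \eqref{S7} for $\prec$ follows from \eqref{S7} for $S$, and \eqref{S8} holds because every nonzero normal S-ideal contains a nonzero S-ideal of the form $S^{-1}[{\downarrow}a]$.

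For the second, given a compatible subordination $T\colon(B_1,S_1)\to(B_2,S_2)$ one defines $\M(T)$ by a companion formula (roughly, $I\mathrel{\M(T)}J$ iff there are $a\in I^u$ and $b\in B_2$ with $a\mathrel{T}b$ and $b\in J$). Axioms \eqref{S1}--\eqref{S4} for $\M(T)$ come from those for $T$ and the lattice structure of the completions, while compatibility $\M(T)\circ\M(S_1)=\M(T)=\M(S_2)\circ\M(T)$ follows from $T\circ S_1=T=S_2\circ T$ together with interpolation; preservation of identities (recall that the identity on $(B,S)$ in $\SubSfive$ is $S$ itself) and of composition is then a bookkeeping exercise, the composition case being where compatibility of the morphisms is genuinely used. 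To conclude that $\M$ is an equivalence with quasi-inverse $\Delta$, one exhibits natural isomorphisms $\Delta\M\cong\id_{\SubSfive}$ and $\M\Delta\cong\id_{\devS}$: for $(B,S)\in\SubSfive$, the object $(B,S)$ is isomorphic in $\SubSfive$ to $\Delta\M(B,S)$ via a subordination (not a Boolean homomorphism) built from the dense embedding $a\mapsto S^{-1}[{\downarrow}a]$ of $B$ into $\mathcal{NI}(B,S)$; and for $(D,S_D)\in\devS$, completeness together with \eqref{S8} forces every normal S-ideal $I$ of $D$ to equal $S_D^{-1}[{\downarrow}\bigvee I]$, so $I\mapsto\bigvee I$ and $d\mapsto S_D^{-1}[{\downarrow}d]$ are mutually inverse Boolean isomorphisms between $\mathcal{NI}(D,S_D)$ and $D$ that respect the proximities. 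Naturality of both comparisons is immediate from the definition of $\M$ on morphisms.

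We expect the main obstacle to be the first part — verifying that $\mathcal{NI}(B,S)$ is a \emph{Boolean} algebra (complementation, and distributivity via density) and that the chosen $\prec$ satisfies \emph{all} of \eqref{S5}--\eqref{S8}, where the interpolation axiom \eqref{S7} for $S$ has to be used once to obtain \eqref{S7} for $\prec$ and once, together with completeness, for \eqref{S8}. A secondary subtlety is that the unit of the equivalence is an isomorphism in $\SubSfive$ rather than a Boolean isomorphism, so one reasons with converse subordinations rather than inverse maps.
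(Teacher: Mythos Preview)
The paper does not prove this theorem; it is quoted verbatim from \cite[Prop.~4.4, Thm.~4.12]{ABC22b} and no argument is given in the present paper. You acknowledge this at the outset, so there is no ``paper's proof'' to compare against here.

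Your outline is a plausible reconstruction of the MacNeille-completion approach one expects in \cite{ABC22b}: realizing $\mathcal{NI}(B,S)$ as the fixed points of the closure operator $N(I)=S^{-1}[(S[I^u])^\ell]$ on S-ideals, obtaining completeness from that, establishing Booleanness via the dense order-embedding $a\mapsto S^{-1}[{\downarrow}a]$, and then transferring the proximity. The functorial part and the two natural isomorphisms are sketched at the right level of detail for a citation.

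One point to watch if you ever flesh this out: your proposed proximity $I\prec J$ via ``$I\subseteq{\downarrow}a$, $a\mathrel{S}b$, $b\in J$'' only makes sense when $I$ is principal below some $a$, which need not happen for an arbitrary normal S-ideal; the formulation in \cite{ABC22b} presumably phrases $\prec$ in terms of $I^u$ and $J$ (or equivalently via the closure operator) rather than a single witness $a$. Similarly, your claim that \eqref{S8} follows because every nonzero normal S-ideal contains some $S^{-1}[{\downarrow}a]$ with $a\neq 0$ needs the density of the embedding, not just its existence. These are fixable details rather than structural gaps.
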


\begin{definition}
Define $\Ls \colon \devS \to \devS$ to be the composition $\M\circ\Ks\circ\Delta$.
\end{definition}

Clearly $\Ls$ is a well-defined endofunctor on $\devS$. Moreover, since $\M\colon\SubSfive\to\devS$ is an equivalence whose quasi-inverse is $\Delta \colon \devS \to \SubSfive$, we have:

\begin{theorem} \label{thm: H and L}
The following diagram commutes up to a natural isomorphism.
\[
\begin{tikzcd}[column sep=5pc, row sep=4pc]
	\SubSfive \arrow[r, "\M"] \arrow[d, "\Ks", swap] & \devS \arrow[d,"\Ls"] \\
	\SubSfive \arrow[r, "\M"'] & \devS
\end{tikzcd}
\]
\end{theorem}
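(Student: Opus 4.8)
The plan is to derive the statement formally from the definition $\Ls = \M \circ \Ks \circ \Delta$ together with the fact, recalled just above, that $\M$ and $\Delta$ witness an equivalence $\SubSfive \simeq \devS$. No new topological or algebraic input is needed; the content is pure $2$-categorical bookkeeping, which is why the result is recorded as an immediate consequence of the preceding discussion.

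First I would invoke \cite[Prop.~4.4, Thm.~4.12]{ABC22b}: since $\Delta \colon \devS \to \SubSfive$ is a quasi-inverse of $\M$, there is a natural isomorphism $\eta \colon \Delta \circ \M \Rightarrow \id_{\SubSfive}$. Next, whisker $\eta$ on the left by the functor $\Ks \colon \SubSfive \to \SubSfive$ to obtain a natural isomorphism $\Ks \ast \eta \colon \Ks \circ \Delta \circ \M \Rightarrow \Ks \circ \id_{\SubSfive} = \Ks$, and then whisker once more on the left by $\M$ to obtain a natural isomorphism
\[
\M \ast (\Ks \ast \eta) \colon \M \circ \Ks \circ \Delta \circ \M \Rightarrow \M \circ \Ks .
\]
Because $\M \circ \Ks \circ \Delta \circ \M = (\M \circ \Ks \circ \Delta) \circ \M = \Ls \circ \M$ by the definition of $\Ls$, this is precisely a natural isomorphism $\Ls \circ \M \Rightarrow \M \circ \Ks$, i.e.\ the commutativity up to natural isomorphism of the displayed square. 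I would then note that each whiskering produces a natural isomorphism because functors preserve isomorphisms and horizontal composition of natural transformations preserves natural isomorphisms (the inverse is $\M \ast (\Ks \ast \eta^{-1})$).

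I do not expect any real obstacle here. The only point where care is warranted is getting the two whiskerings on the correct side: rewriting $\Ls \circ \M = \M \circ \Ks \circ (\Delta \circ \M)$ before applying $\eta$ makes it transparent that it is the inner composite $\Delta \circ \M$ that is replaced by $\id_{\SubSfive}$, and hence that the resulting isomorphism has source $\Ls \circ \M$ and target $\M \circ \Ks$, as required.
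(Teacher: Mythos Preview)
Your proposal is correct and is precisely the argument the paper has in mind: the paper records the theorem as an immediate consequence of the definition $\Ls = \M \circ \Ks \circ \Delta$ together with the fact that $\Delta$ is a quasi-inverse of $\M$, without spelling out the whiskering, and you have simply made that two-line derivation explicit.
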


We recall from \cite[Def.~4.10]{ABC22a} that the functor $\D \colon \KHausR \to \devS$ sends each compact Hausdorff space $X$ to the de Vries algebra $\RO(X)$ of its regular opens and a closed relation $R \colon X_1 \to X_2$ to the compatible subordination $\D(R) \colon \RO(X_1) \to \RO(X_2)$ defined~by 
\[
U \mathrel{\D(R)} V \iff R[\cl(U)] \subseteq V.
\]
By \cite[Thm.~5.11]{ABC22b}, $\M \circ \Clop$ is naturally isomorphic to $\D \circ \Q$. Therefore, combining \cref{thm: L and K(Vr),thm: H and L}, we obtain:

\begin{theorem} \label{thm: main result}
The following diagram commutes up to natural isomorphism.
\[
	\begin{tikzcd}[column sep=5pc, row sep=4pc]
		\KHausR \arrow[d, "\Vr"] \arrow[rrr, "\D", bend left = 1.3em] & \StoneER \arrow[r, "\Clop"] \arrow[d, "\Vr"] \arrow[l,"\Q", swap] & \SubSfive \arrow[r, "\M"] \arrow[d, "\Ks"] & \devS \arrow[d,"\Ls"] \\
		\KHausR \arrow[rrr, "\D", bend right = 1.3 em] & \StoneER \arrow[r, swap, "\Clop"'] \arrow[l, swap,"\Q"] & \SubSfive \arrow[r, swap, "\M"'] & \devS
	\end{tikzcd}
\]
\end{theorem}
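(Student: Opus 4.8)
The plan is to treat \cref{thm: main result} as an assembly of results already in hand, with essentially no new computation. The three inner squares of the diagram are precisely the content of \cref{thm: L and K(Vr)} (the squares involving $\Q$ and $\Clop$) together with \cref{thm: H and L} (the square involving $\M$), so nothing further is needed for them. The only genuinely new point is to fit the two curved $\D$-arrows: I must check that $\D$ is naturally isomorphic to the composite $\M \circ \Clop \circ \G$ along the top (and symmetrically along the bottom), and that with this identification the outer square $\Ls \circ \D \simeq \D \circ \Vr$ commutes up to natural isomorphism.

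First I would record that $\D \simeq \M \circ \Clop \circ \G$. By \cite[Thm.~5.11]{ABC22b} there is a natural isomorphism $\M \circ \Clop \simeq \D \circ \Q$, and since $\G$ is a quasi-inverse of $\Q$ (\cref{rem:G and Q}) we have $\Q \circ \G \simeq \id_{\KHausR}$; whiskering on the right with $\G$ yields $\M \circ \Clop \circ \G \simeq \D \circ \Q \circ \G \simeq \D$. Next I would transport the commutativity of the left square of \cref{thm: L and K(Vr)} through this equivalence. Starting from $\Vr \circ \Q \simeq \Q \circ \Vr$ and using $\Q \circ \G \simeq \id_{\KHausR}$ and $\G \circ \Q \simeq \id_{\StoneER}$, one computes
\[
\G \circ \Vr \simeq \G \circ \Vr \circ \Q \circ \G \simeq \G \circ \Q \circ \Vr \circ \G \simeq \Vr \circ \G,
\]
so that $\Vr$ on $\StoneER$ and $\Vr$ on $\KHausR$ are intertwined by $\G$ as well (the two occurrences of $\Vr$ being identified by the paper's abuse of notation).

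With these preliminaries in place, the outer square follows by chaining the available natural isomorphisms:
\begin{align*}
\Ls \circ \D &\simeq \Ls \circ \M \circ \Clop \circ \G \simeq \M \circ \Ks \circ \Clop \circ \G\\
&\simeq \M \circ \Clop \circ \Vr \circ \G \simeq \M \circ \Clop \circ \G \circ \Vr \simeq \D \circ \Vr,
\end{align*}
where the five steps use, in order, $\D \simeq \M \circ \Clop \circ \G$; \cref{thm: H and L}; the commutativity of the right square of \cref{thm: L and K(Vr)} (equivalently \cref{t:commute}\eqref{t:commute:item3}); the identity $\Vr \circ \G \simeq \G \circ \Vr$ just derived; and $\D \simeq \M \circ \Clop \circ \G$ once more. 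I expect no real obstacle here: the substantive work was carried out in establishing \cref{t:commute} and the well-definedness of $\Ks$ and $\Vr$ on $\SubSfive$ and $\StoneER$. The only thing demanding attention is the bookkeeping of the directions of $\Q$ and $\G$ and the transport of a natural isomorphism along the equivalence they induce, which is routine.
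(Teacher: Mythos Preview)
Your proposal is correct and follows essentially the same approach as the paper: invoke \cite[Thm.~5.11]{ABC22b} for $\M \circ \Clop \simeq \D \circ \Q$, then combine with \cref{thm: L and K(Vr)} and \cref{thm: H and L}. The paper's proof is in fact just the one-sentence remark preceding the theorem; your version merely spells out the chain of natural isomorphisms and the transport along the $\Q$--$\G$ equivalence more explicitly than the paper bothers to.
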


\begin{remark}\label{rem:open problem}
We thus obtain that the Vietoris endofunctor $\Vr$ on $\KHausR$ can be dually described either as the endofunctor $\Ks$ on $\SubSfive$ or as the endofunctor $\Ls$ on $\devS$. The usefulness of the latter description stems from the fact that, unlike in $\SubSfive$, isomorphisms in $\devS$ are given by structure-preserving bijections \cite[Thm.~5.4]{ABC22a}.

In \cite[p.~375]{BBH15b} it was left as an open problem to give a direct description of an endofunctor on $\dev$ that is dual to the Vietoris endofunctor $\V$ on $\KHaus$. Our result above solves a similar problem by showing that it is the endofunctor $\Ls$ that is dual to the Vietoris endofunctor $\Vr$ on $\KHausR$. In~\cite{ABC23} we show how to modify our result to obtain a solution of \cite[p.~375]{BBH15b}.
\end{remark}

We conclude the paper by developing the dual endofunctor $\Vr$ for $\KRFrmP$. For this we recall from \cite[Thm.~3.11]{ABC22b} that associating with each $\sf S5$-subordination algebra $(B,S)$ the frame $\mathcal{SI}(B,S)$ of S-ideals of $(B,S)$ defines a contravariant functor $\mathbb I \colon \SubSfive\to\KRFrmP$. To describe its quasi-inverse, we recall that for a frame $L$, the {\em pseudocomplement} of $a\in L$ is given by $a^* = \bigvee \{ x \in L \mid a \wedge x = 0 \}$. The {\em well-inside relation} on $L$ is then defined by $a \prec b$ iff $a^* \vee b = 1$. The \emph{booleanization} of $L$ is the boolean frame $\mathfrak B L = \{ a\in L \mid a=a^{**} \}$ (see, e.g., \cite{BP96}), and restricting $\prec$ to $\mathfrak B L$ yields a de Vries algebra $(\mathfrak B L,\prec)$ \cite{Bez12}. This defines a contravariant functor $\mathfrak{B} \colon \KRFrmP \to \devS$ \cite[Prop.~4.2]{ABC22b}, and we have:

\begin{theorem} \cite[Thm.~4.16(1)]{ABC22b}
The functors $\mathbb I$ and $\Delta \circ \mathfrak{B}$ form a dual equivalence between $\SubSfive$ and $\KRFrmP$.
\end{theorem}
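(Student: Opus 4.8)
The plan is to exhibit the two natural isomorphisms witnessing that the contravariant functors $\mathbb I$ and $\Delta\circ\mathfrak B$ are quasi-inverse, namely $(\Delta\circ\mathfrak B)\circ\mathbb I\cong\mathrm{Id}_{\SubSfive}$ and $\mathbb I\circ(\Delta\circ\mathfrak B)\cong\mathrm{Id}_{\KRFrmP}$ (both composites being covariant).

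For the first isomorphism, since $\M\colon\SubSfive\to\devS$ is an equivalence with quasi-inverse $\Delta$, it is enough to produce a natural isomorphism $\mathfrak B\circ\mathbb I\cong\M$; then $\Delta\circ\mathfrak B\circ\mathbb I\cong\Delta\circ\M\cong\mathrm{Id}_{\SubSfive}$. Fixing $(B,S)\in\SubSfive$, recall that $\mathbb I(B,S)=\mathcal{SI}(B,S)$ is the frame of S-ideals. I would first describe the pseudocomplement in $\mathcal{SI}(B,S)$ directly in terms of $S$, then show that its regular elements (the S-ideals $I$ with $I=I^{**}$) are exactly the normal S-ideals, so that $\mathfrak B\,\mathcal{SI}(B,S)$ and the MacNeille completion $\M(B,S)=\mathcal{NI}(B,S)$ agree as complete lattices, and finally that the well-inside relation of $\mathcal{SI}(B,S)$ restricted to these regular elements coincides with the proximity of $\M(B,S)$. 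Naturality in $(B,S)$ then follows from the functoriality of $\mathbb I$, $\mathfrak B$, and $\M$, after checking that the induced maps agree on a generating set of S-ideals.

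For the second isomorphism, fix $L\in\KRFrmP$ and write $(B,\prec)=\mathfrak B L$, so that $\mathbb I(\Delta(B,\prec))=\mathcal{SI}(B,\prec)$ is the frame of round ideals of the de Vries algebra $(\mathfrak B L,\prec)$. I would define
\[
\varphi_L\colon L\longrightarrow\mathcal{SI}(\mathfrak B L,\prec),\qquad \varphi_L(a)=\{b\in\mathfrak B L\mid b\prec a\},
\]
with candidate inverse $\psi_L(I)=\bigvee I$. Using regularity of $L$ — so that $a=\bigvee\{b\in\mathfrak B L\mid b\prec a\}$ for every $a$ — together with interpolation of $\prec$ on a compact regular frame, one checks that $\varphi_L(a)$ is a round S-ideal and that $\psi_L\circ\varphi_L=\mathrm{id}_L$; compactness of $L$ (to reduce $b\prec\bigvee I$ to $b\prec c$ for a single $c\in I$) gives $\varphi_L\circ\psi_L=\mathrm{id}$. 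It then remains to verify that $\varphi_L$ preserves finite meets and arbitrary joins, so that it is a frame isomorphism, and that the family $\{\varphi_L\}_L$ is natural with respect to preframe homomorphisms, which requires unwinding how $\mathfrak B$ and $\mathbb I$ act on morphisms.

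I expect the second isomorphism to be the main obstacle. On objects, the delicate point is that $\varphi_L$ preserves arbitrary joins: one must show that a join of round ideals taken in $\mathcal{SI}(\mathfrak B L,\prec)$ is again round and matches the join in $L$. The naturality square is subtler still, because the morphisms of $\KRFrmP$ are only preframe homomorphisms, so one cannot use preservation of arbitrary joins when transporting round ideals. A possible shortcut is to invoke the chain of equivalences $\KHausR\simeq\SubSfive\simeq\devS$ together with the extension of Isbell duality to $\KHausR$ and $\KRFrmP$, and then merely identify the resulting duality functors with $\mathbb I$ and $\Delta\circ\mathfrak B$; the direct verification sketched above is, however, more self-contained.
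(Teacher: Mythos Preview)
The paper does not prove this theorem at all: it is stated with a citation to \cite[Thm.~4.16(1)]{ABC22b} and no argument is given in the present paper. So there is no ``paper's own proof'' to compare your proposal against; the result is imported wholesale from the companion paper on ideal and MacNeille completions of subordination algebras.

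That said, your outline is a reasonable sketch of how such a proof would go, and it matches the architecture one would expect from \cite{ABC22b}: identify $\mathfrak B\,\mathcal{SI}(B,S)$ with $\mathcal{NI}(B,S)$ (so $\mathfrak B\circ\mathbb I\cong\M$), and for the other direction use the round-ideal map $a\mapsto\{b\in\mathfrak BL\mid b\prec a\}$ with inverse $I\mapsto\bigvee I$. Your worry about naturality with respect to preframe homomorphisms is the genuine content that must be supplied, and your suggested shortcut via the chain $\KHausR\simeq\SubSfive$ together with the $\KHausR$--$\KRFrmP$ duality is exactly the kind of detour the cited paper can afford. One small correction: $\varphi_L$ need not preserve arbitrary joins in order to be a frame isomorphism---it suffices that $\varphi_L$ and $\psi_L$ be mutually inverse order-preserving maps, which already forces $\varphi_L$ to be a complete lattice (hence frame) isomorphism; so the ``delicate point'' you flag about join preservation is automatic once bijectivity and monotonicity are in hand.
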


\begin{definition}
Define $\Jp \colon \KRFrmP \to \KRFrmP$ to be the composition $\mathbb I\circ\Ks\circ\Delta\circ\mathfrak{B}$.
\end{definition}

Clearly $\Jp$ is a well-defined endofunctor on $\KRFrmP$. Moreover, since $\mathbb{I}\colon\SubSfive\to\KRFrmP$ is an equivalence whose quasi-inverse is $\Delta \circ \mathfrak{B} \colon \KRFrmP \to \SubSfive$, we obtain:

\begin{theorem}\label{thm:diagram JI}
The following diagram commutes up to natural isomorphism.
\[
\begin{tikzcd}[column sep=5pc, row sep=4pc]
	\SubSfive \arrow[r, "\mathbb I"] \arrow[d, "\Ks", swap] & \KRFrmP \arrow[d,"\Jp"] \\
	\SubSfive \arrow[r, swap, "\mathbb I"'] & \KRFrmP  
\end{tikzcd}
\]
\end{theorem}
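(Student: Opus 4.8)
The plan is to observe that this is a purely formal consequence of the definition $\Jp = \mathbb{I}\circ\Ks\circ\Delta\circ\mathfrak{B}$ together with the dual equivalence between $\SubSfive$ and $\KRFrmP$ witnessed by $\mathbb{I}$ and $\Delta\circ\mathfrak{B}$ (cited just above). Since the diagram in question has $\mathbb{I}$ as both horizontal arrows and $\Ks,\Jp$ as the left and right vertical arrows, commutativity up to natural isomorphism amounts to exhibiting a natural isomorphism $\mathbb{I}\circ\Ks \simeq \Jp\circ\mathbb{I}$ of functors $\SubSfive\to\KRFrmP$.

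First I would unwind the definition of $\Jp$ to get $\Jp\circ\mathbb{I} = \mathbb{I}\circ\Ks\circ\Delta\circ\mathfrak{B}\circ\mathbb{I}$. Next I would invoke \cite[Thm.~4.16(1)]{ABC22b}: because $\mathbb{I}$ and $\Delta\circ\mathfrak{B}$ form a dual equivalence, the composite $\Delta\circ\mathfrak{B}\circ\mathbb{I}$ is naturally isomorphic to the identity functor on $\SubSfive$. Here the only point requiring attention is the variance: $\mathbb{I}$ and $\mathfrak{B}$ are contravariant while $\Delta$ is covariant, so $\Delta\circ\mathfrak{B}\circ\mathbb{I}$ is indeed a covariant endofunctor of $\SubSfive$, and the unit of the dual adjunction supplies the required natural isomorphism with $\mathrm{id}_{\SubSfive}$. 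Whiskering this natural isomorphism with $\mathbb{I}\circ\Ks$ on the left then yields $\Jp\circ\mathbb{I} = \mathbb{I}\circ\Ks\circ(\Delta\circ\mathfrak{B}\circ\mathbb{I}) \simeq \mathbb{I}\circ\Ks$, which is exactly the asserted commutativity.

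This argument runs entirely parallel to the one already used for \cref{thm: H and L} (with $\M$, $\Delta$, $\Ls$ there playing the roles of $\mathbb{I}$, $\Delta\circ\mathfrak{B}$, $\Jp$ here), and I do not expect a genuine obstacle: the whole content is the dual equivalence from \cite{ABC22b} plus the definition of $\Jp$. The only care needed is bookkeeping the contravariance of $\mathbb{I}$ and $\mathfrak{B}$, so that the composites are formed in the correct order and the natural isomorphism from the dual equivalence is whiskered on the correct side.
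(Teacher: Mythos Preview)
Your proposal is correct and matches the paper's own argument, which is entirely implicit: the paper simply remarks that $\mathbb{I}$ is an equivalence with quasi-inverse $\Delta\circ\mathfrak{B}$ and then states the theorem without further proof. Your unwinding $\Jp\circ\mathbb{I} = \mathbb{I}\circ\Ks\circ(\Delta\circ\mathfrak{B}\circ\mathbb{I}) \simeq \mathbb{I}\circ\Ks$ is exactly the intended reasoning, and the parallel you draw with \cref{thm: H and L} is apt.
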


For a compact Hausdorff space $X$, let $\mathcal{O}(X)$ be the compact regular frame of opens of $X$. For a closed relation $R \colon X \to Y$, let $\Box_R\colon\mathcal{O}(Y)\to\mathcal{O}(X)$ be given by $\Box_R U = - R^{-1}-U$. This defines a contravariant functor $\mathcal{O}\colon\KHausR\to\KRFrmP$, which is a dual equivalence \cite{Tow96,JKM01}.
By \cite[Thm.~5.7]{ABC22b}, $\mathcal{O} \circ \Q$ is naturally isomorphic to $\I \circ \Clop$. Thus, putting \cref{thm: L and K(Vr),thm:diagram JI} together yields:

\begin{theorem}
The following diagram commutes up to natural isomorphism.
\[
\begin{tikzcd}[column sep=5pc, row sep=4pc]
	\KHausR \arrow[d, "\Vr"] \arrow[rrr, "\mathcal{O}", bend left = 1.3em] & \StoneER \arrow[r, "\Clop"] \arrow[d, "\Vr"] \arrow[l,"\Q", swap] & \SubSfive \arrow[r, "\mathbb I"] \arrow[d, "\Ks"] & \KRFrmP \arrow[d,"\Jp"]\\
	\KHausR \arrow[rrr, "\mathcal{O}", swap, bend right = 1.3em] & \StoneER \arrow[r, swap, "\Clop"'] \arrow[l, swap,"\Q"] & \SubSfive \arrow[r, "\mathbb I"] & \KRFrmP
\end{tikzcd}
\]
\end{theorem}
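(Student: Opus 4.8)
The plan is to deduce the result by pasting together cells that have already been verified, supplemented by a single natural isomorphism borrowed from \cite{ABC22b}. The three inner squares require nothing new: the middle square $\Clop \circ \Vr \simeq \Ks \circ \Clop$ and the left square $\Vr \circ \Q \simeq \Q \circ \Vr$ are precisely \cref{thm: L and K(Vr)}, while the right square $\mathbb I \circ \Ks \simeq \Jp \circ \mathbb I$ is \cref{thm:diagram JI}. Hence the only genuinely new thing to establish is that the outer cell bounded by the two bent $\mathcal O$-arrows commutes, i.e.\ that $\mathcal O \circ \Vr \simeq \Jp \circ \mathcal O$ as contravariant functors $\KHausR \to \KRFrmP$.

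First I would recall, as noted just before the statement, that $\mathcal O \circ \Q$ is naturally isomorphic to $\mathbb I \circ \Clop$ by \cite[Thm.~5.7]{ABC22b}. Then I would carry out the evident diagram chase: whiskering the left square of \cref{thm: L and K(Vr)} with $\mathcal O$, applying $\mathcal O \circ \Q \simeq \mathbb I \circ \Clop$, whiskering the middle square of \cref{thm: L and K(Vr)} with $\mathbb I$, applying \cref{thm:diagram JI}, and finally applying $\mathcal O \circ \Q \simeq \mathbb I \circ \Clop$ once more. This produces the chain of natural isomorphisms
\[
\mathcal O \circ \Vr \circ \Q \simeq \mathcal O \circ \Q \circ \Vr \simeq \mathbb I \circ \Clop \circ \Vr \simeq \mathbb I \circ \Ks \circ \Clop \simeq \Jp \circ \mathbb I \circ \Clop \simeq \Jp \circ \mathcal O \circ \Q,
\]
so that $\mathcal O \circ \Vr \circ \Q \simeq \Jp \circ \mathcal O \circ \Q$. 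To remove the $\Q$ appearing on both sides, I would precompose with the Gleason cover functor $\G \colon \KHausR \to \StoneER$ and use that $\Q \circ \G \simeq \id_{\KHausR}$, since $\G$ is a quasi-inverse of $\Q$ (see \cref{rem:G and Q}); this yields $\mathcal O \circ \Vr \simeq \Jp \circ \mathcal O$, completing the argument.

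I do not expect a real obstacle: the theorem is a formal consequence of results already in place. The only points calling for mild care are bookkeeping — keeping track of the contravariance of $\mathcal O$, $\mathbb I$, and $\mathfrak B$ while whiskering natural isomorphisms (harmless, as every transformation in sight is invertible), and justifying the cancellation of $\Q$, which is legitimate precisely because $\Q$ and $\G$ form an equivalence of categories. I would therefore present the proof compactly, citing \cref{thm: L and K(Vr)}, \cref{thm:diagram JI}, and \cite[Thm.~5.7]{ABC22b}, and simply displaying the chain of isomorphisms above.
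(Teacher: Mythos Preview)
Your proposal is correct and matches the paper's own argument: the paper simply notes that $\mathcal O\circ\Q\simeq\mathbb I\circ\Clop$ by \cite[Thm.~5.7]{ABC22b} and then states that putting \cref{thm: L and K(Vr)} and \cref{thm:diagram JI} together yields the result. Your write-up spells out the pasting more explicitly (including the cancellation of $\Q$ via $\G$), but the underlying route is identical.
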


\begin{remark}\label{rem:Johnstone}
We thus obtain an endofunctor on $\KRFrmP$ that is dual to $\Vr$. This extends Johnstone's endofunctor $\J$ on $\KRFrm$ that is dual to the Vietoris endofunctor $\V$ on $\KHaus$.
Alternatively, one could generalize Johnstone's construction directly, but the difficulty would be in lifting the morphisms. Indeed, Johnstone writes each element of $\J(L)$ as a join of finite meets of the generators $\{ \Box_a,\Diamond_a \mid a \in L\}$. Since every frame homomorphism $f \colon L \to M$ preserves arbitrary joins and finite meets, it lifts to a frame homomorphism ${\J(f) \colon \J(L) \to \J(M)}$. 
On the other hand, preframe homomorphisms only preserve directed joins, so the above lift could be modified as follows. Write each element as a directed join of finite meets of finite joins of the generators $\{ \Box_a,\Diamond_a \mid a \in L\}$. Then the hard part is in proving that the natural lift of $f \colon L \to M$ to $\Jp(f) \colon \J(L) \to \J(M)$ is well defined.
This issue disappears in our alternate construction of $\Jp$ as the composite $\mathbb I\circ\Ks\circ\Delta\circ\mathfrak{B}$.
\end{remark}

\newcommand{\etalchar}[1]{$^{#1}$}

\end{document}